\documentclass[12pt,a4paper]{article}
\usepackage{amsmath,amssymb,amsthm,amsfonts}
\usepackage{graphicx}
\usepackage{xcolor}
\usepackage{hyperref}
\usepackage{mathtools}
\usepackage{enumitem}
\usepackage{booktabs}
\usepackage{caption}
\usepackage{subcaption}
\usepackage[margin=2.5cm]{geometry}
\hypersetup{colorlinks=true,linkcolor=blue!50!black,citecolor=blue!50!black,urlcolor=blue!50!black}
\graphicspath{{figures/}}

\newtheorem{theorem}{Theorem}[section]
\newtheorem{lemma}[theorem]{Lemma}
\newtheorem{proposition}[theorem]{Proposition}

\newtheorem{introtheorem}{Theorem}

\theoremstyle{definition}
\newtheorem{definition}[theorem]{Definition}
\newtheorem{remark}[theorem]{Remark}
\newtheorem{problem}[theorem]{Open Problem}

\newcommand{\R}{\mathbb{R}}
\newcommand{\C}{\mathbb{C}}
\newcommand{\Z}{\mathbb{Z}}
\newcommand{\x}{\mathbf{x}}
\newcommand{\n}{\mathbf{n}}

\DeclareMathOperator{\Ree}{Re}
\DeclareMathOperator{\Imm}{Im}
\newcommand{\viewer}[1]{\href{https://surfaces.igorkozyrev.com/\##1}{\nolinkurl{surfaces.igorkozyrev.com/\##1}}}

\title{The Costa Surface with a Free-Boundary Hole:\\ an Embedded Exterior Free-Boundary Minimal Surface\\ with Three Ends}
\author{Igor Kozyrev\\[4pt]
{\normalsize Independent University of Moscow}\\
{\normalsize Bolshoy Vlasyevskiy per.~11, Moscow 119002, Russia}\\
{\normalsize\texttt{ikozyrev@gmail.com}}}
\date{October 2026}

\begin{document}
\maketitle

\begin{abstract}
An exterior free-boundary minimal surface is a properly immersed minimal surface outside the unit
ball whose compact boundary lies on the unit sphere and meets it orthogonally. By Mazet and
Mendes, in $\R^3$ every such surface with one regular end is a catenoidal piece, and every stable
one with parallel embedded regular ends is catenoidal. We construct connected embedded examples with three
ends from the Costa surface $M$, placed so that its central saddle point is at the origin. For
every sufficiently small $\varepsilon>0$ we perturb $\varepsilon^{-1}(M\setminus B_\varepsilon)$ into a properly
embedded exterior free-boundary minimal surface of genus one with one boundary circle and three
ends, two catenoidal and one planar. The construction is a small-hole perturbation. The only error of
the initial surface is in the boundary condition. The symmetry group of the Costa surface removes
all obstructions: at the hole it forbids the logarithmic and the tilting modes, and
globally it kills the bounded Jacobi fields of the Costa surface, which is nondegenerate by
Nayatani. In the conformal compactification the Jacobi operator is smooth also at the ends, so the
linear analysis takes place on a compact torus with one small hole. The same construction applied to the
Costa--Hoffman--Meeks surfaces gives such examples of every genus $k\geq1$, of index at least $2k+3$. We also report numerical experiments with two
independent implementations of the Costa surface. They confirm the period condition, the symmetries, the boundary
error, the index and nullity of the Jacobi operator and the absence of an equivariant kernel. They also compute the
free-boundary correction near the hole, which gives pictures of the surfaces.
\end{abstract}

\section{Introduction}\label{sec:intro}

\begin{definition}\label{def:exterior}
An \emph{exterior free-boundary minimal surface} (exterior FBMS) in $\R^3$ is a smooth surface
$\Sigma$ with compact boundary together with a proper minimal immersion
$\x\colon\Sigma\to\R^3\setminus B^3$ such that $\x^{-1}(S^2)=\partial\Sigma$, $\x$ meets $S^2$ orthogonally
along $\partial\Sigma$, and every component of $\Sigma$ has non-empty boundary. Here $B^3$ is the open unit
ball.
\end{definition}

The catenoidal pieces $C_\alpha$ and the exterior of the equatorial disk are the rotationally symmetric
examples. Mazet and Mendes~\cite{MazetMendes2022} proved that an exterior FBMS in $\R^3$ with one
regular end is one of them~\cite[Theorem~1.2]{MazetMendes2022}, and that a stable exterior FBMS with parallel
embedded regular ends is catenoidal~\cite[Theorem~1.1]{MazetMendes2022}. Disconnected embedded examples with several
ends are trivial: the part of a critical catenoid outside the ball consists of two catenoidal pieces. In~\cite{KozyrevRealization}
we constructed by gluing connected embedded examples with $\ell\geq2$ ends, $b\geq1$ boundary components and every genus; all their
ends are catenoidal. Here we give a different construction, a perturbation of a complete minimal surface with a small free-boundary
hole, which produces connected embedded examples with a planar end.

\begin{introtheorem}\label{thm:A}
Let $M$ be the Costa surface in the position of Section~\ref{sec:costa}, with its saddle point at the
origin, and let $G$ be the group of order $8$ of its symmetries generated by $\sigma_1,\sigma_2,\tau$ (Section~\ref{sec:costa}).
There is $\varepsilon_0>0$ such that for every $\varepsilon\in(0,\varepsilon_0)$ there is a connected properly embedded exterior FBMS $\Sigma_\varepsilon\subset\R^3\setminus B^3$ with the
following properties.
\begin{enumerate}[label=(\roman*)]
\item $\Sigma_\varepsilon$ has genus one, one boundary circle and three ends: two catenoidal ends and one
planar end, all of them embedded, of multiplicity one and parallel.
\item $\Sigma_\varepsilon$ is invariant under $G$.
\item $\varepsilon\Sigma_\varepsilon=\{\Phi^{u_\varepsilon(p)/\theta(p)}(\x(p)):\ p\in\Omega_\varepsilon\setminus\{0,\frac12,\frac i2\}\}$ for the flow $\Phi$ and the function $\theta$ of
Section~\ref{sec:perturb} and a function $u_\varepsilon$ with $|u_\varepsilon(p)|\leq C\varepsilon\,\min(|\x(p)|,1)$, and $\varepsilon\Sigma_\varepsilon\to M$ smoothly on compact subsets of
$\R^3\setminus\{0\}$ as $\varepsilon\to0$.
\item $\Sigma_\varepsilon$ is unstable.
\end{enumerate}
\end{introtheorem}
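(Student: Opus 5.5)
The plan is a small-hole perturbation carried out equivariantly on the conformal compactification of $M$. Parametrize $M$ by the square torus $T=\C/\Z[i]$ with Weierstrass data so that the saddle point is $\x(0)=0$ and the ends are at $\frac12,\frac i2$ and $\frac{1+i}2$. Let $\Omega_\varepsilon\subset T$ be the preimage of $M\setminus B_\varepsilon$; near $0$ this is the complement of a small $G$-invariant topological disk. The initial surface is $\varepsilon^{-1}\x(\Omega_\varepsilon)$. It is exactly minimal. Its only error is the contact angle with $S^2$. Near the saddle $\x(p)=\Ree(c\,p^3)+O(|p|^4)$, so the saddle is a monkey saddle, and the angle error along $|\x|=\varepsilon$ is $O(\varepsilon)$ after rescaling.

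We write the perturbed surface as a normal-type graph, moved along the flow $\Phi$ of the vector field normalized by $\theta$, with graph function $u/\theta$. The free-boundary minimal surface problem then becomes a nonlinear problem $H(u)=0$ in $\Omega_\varepsilon$, together with a Robin-type boundary condition $\partial_\nu u - u = E_\varepsilon+Q(u)$ on $\partial\Omega_\varepsilon$. We multiply the Jacobi operator by the conformal factor: $|\nabla g|^{-2}\dots$ times $(\Delta+|A|^2)$ becomes $\Delta_{T}+2|\nabla\nu|^2_{T}$ in the flat metric, where $\nu$ is the Gauss map. This operator is smooth across the ends, because the Gauss map extends holomorphically. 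The ends therefore impose no weighted-space issues, and bounded Jacobi fields are exactly the smooth kernel on $T$.

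\textbf{Linear analysis.} Work in $G$-invariant Hölder spaces on $\Omega_\varepsilon$. Nayatani's nondegeneracy says the kernel on $T$ is spanned by the translations, the rotation about the vertical axis and the vertical-end Jacobi fields. I will check that no nonzero element of this kernel is $G$-invariant: the reflections $\sigma_1,\sigma_2$ through vertical planes and the rotation-reflection $\tau$ act on each of these generators by a nontrivial character. The kernel is then trivial on the $G$-invariant subspace, so $L$ is invertible there on the closed torus.

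For the holed torus we use a standard matching argument. Near the hole, in rescaled coordinates, the model is a plane with a disk removed and the Robin condition. Its obstructions are the logarithmic mode $\log r$ and the tilting modes $r^{\pm1}\cos\phi$, $r^{\pm1}\sin\phi$. $G$ contains a rotation by $\pi/2$ composed with a reflection, and this rules out modes of angular frequency $0$ and $1$ compatible with the invariance. The first allowed frequency is $2$ or $3$, with decay $r^{-2}$ at least. From this we get a uniform bound $\|u\|\le C\|f\|$ for the equivariant problem, with weights $\min(|\x|,1)$ near the hole. This uniform inverse is the main obstacle. It needs a contradiction/blow-up argument: a sequence of normalized solutions with norm going to one converges either to an equivariant Jacobi field on $T$, which vanishes, or to a solution of the exterior-disk model, which vanishes by the mode analysis.

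\textbf{Fixed point and properties.} With the error $E_\varepsilon=O(\varepsilon)$ and quadratic $Q$, the contraction mapping principle in a ball of radius $C\varepsilon$ gives $u_\varepsilon$, which yields (iii). Smooth convergence away from $0$ follows from elliptic estimates. The genus, the three ends and the boundary circle come from $T$ minus three points and one disk, giving (i). Embeddedness follows from that of $M$, from $C^1$-closeness on compacts, and from the unchanged asymptotics at the ends, since $u$ is smooth at the punctures with the ends remaining catenoidal and planar. Orthogonality near the hole follows from the graphical structure over the exterior-disk model. (ii) holds because we work in $G$-invariant spaces. For (iv), the Costa surface has index $5$. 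Take a compactly supported test function in $M\setminus\{0\}$ with negative second variation, transplant it to $\Sigma_\varepsilon$ away from the hole, and use smooth convergence to keep the second variation negative. The boundary term vanishes because the support is away from the boundary.
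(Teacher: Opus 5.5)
Your outline follows the paper's architecture: an equivariant Robin problem on the compactified torus, Nayatani plus symmetry for the global kernel, Fourier modes at the hole, blow-up, contraction. However, the symmetry step, on which everything rests, is never made precise, and read literally it is false. The functions must be $\chi$-twisted equivariant, $u\circ\kappa_\sigma=\chi(\sigma)\,u$ with $\chi(\sigma)=\langle\sigma e_3,e_3\rangle$, because $\n\circ\kappa_\sigma=\chi(\sigma)\,\sigma\circ\n$. This, and not invariance of $u$, is what makes the displaced surface $G$-invariant.

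For literally $G$-invariant functions both of your vanishing claims fail. First, $\langle\n,e_3\rangle\circ\kappa_\sigma=\chi(\sigma)^2\langle\n,e_3\rangle=\langle\n,e_3\rangle$, so the vertical translation field is an invariant bounded Jacobi field. Second, the rotoreflection $S=\sigma_1\tau\colon x\mapsto(-x_2,x_1,-x_3)$ acts on the angle by $\varphi\mapsto\varphi+\frac\pi2$ (it is $S$, not the rotation $\tau$, that does the work). It therefore excludes the mode $n=0$, i.e.\ the obstruction $1+\log r$, only through the sign $u(\varphi+\frac\pi2)=-u(\varphi)$ coming from $\chi(S)=-1$.

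With the twisted action the allowed modes are exactly $n\equiv2\pmod4$. The kernel argument uses $\langle\n,a\rangle\circ\kappa_\sigma=\chi(\sigma)\langle\n,\sigma^{-1}a\rangle$ and $\rho\circ\kappa_\sigma=\det(\sigma)\,\rho$ for $\rho=\langle\n,e_3\times\x\rangle$. Equivariance of $\langle\n,a\rangle+c\rho$ forces $c=0$ (use $\sigma_2$, where $\det\neq\chi$, and the independence of the four fields). It also forces $\sigma a=a$ for all $\sigma$, hence $a=0$ since $S$ fixes no vector. Note that the horizontal translation fields are mixed by $\tau$ rather than multiplied by characters. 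Also, there are no ``vertical-end'' fields: the bounded kernel is exactly the four fields of Lemma~\ref{lem:no_kernel}.

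Your local model is also wrong. The central point is an ordinary saddle, $\x(\omega+w)=(\pi\Ree w,\pi\Imm w,\frac{\mathfrak a}4\Ree w^2)+O(|w|^3)$, not a monkey saddle. A cubic height $\Ree(cw^3)$ is even incompatible with $G$, since $S$ acts by $w\mapsto iw$ and must reverse the height. So the boundary error is $-\frac\kappa2\varepsilon\cos2\varphi+O(\varepsilon^3)$ and the first allowed mode is $2$. Your outline survives this only because it uses nothing beyond $E_\varepsilon=O(\varepsilon)$ and $n\geq2$.

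Several analytic steps are also missing.
\begin{enumerate}
\item Your blow-up dichotomy omits the intermediate regime $\vartheta(p_i)\to0$, $\vartheta(p_i)/\varepsilon_i\to\infty$. Rescaling by $\vartheta(p_i)$ there gives a harmonic function on $\R^2\setminus\{0\}$ with $|u|\leq C|y|$. Without symmetry the linear functions survive, so you need Lemma~\ref{lem:limits}(c).
\item To pass from the a priori estimate to an inverse, you need the Schauder reduction to the weighted supremum. You also need the fact that the Robin problem is Fredholm of index zero on the equivariant subspace.
\item The quadratic estimate for $Q$ up to the punctures does not follow from the smoothness of $J$. You must show that $e^{2\phi}H(\Sigma_\varepsilon(u))$ is a smooth map $C^{2,\alpha}\to C^{0,\alpha}$ in the compactified coordinate. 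The paper gets this by making the displacement vertical at the ends and writing the minimal surface operator in $y=x'/|x'|^2$ (Lemmas~\ref{lem:ends} and~\ref{lem:nonlinear}).
\item Near the hole, the boundary stays on $S^2$ because the flow preserves the spheres $\partial B_r$, $r\leq2\rho_0$. Orthogonality is the equation you solve, not a consequence of graphicality.
\item Embeddedness near the hole does not follow from $C^1$-closeness on compact subsets of $\R^3\setminus\{0\}$. Use that the flow keeps $|x|$ and the azimuth, and that $M\cap\partial B_r$ is a graph over the azimuth.
\item Your route to (iv) needs a negative direction supported away from the origin. A logarithmic cut-off provides one, since a point has zero capacity. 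This is Proposition~\ref{prop:index}, and it even gives index at least $5$. The paper's proof of (iv) instead applies Theorem~1.1 of Mazet and Mendes to the non-catenoidal $\Sigma_\varepsilon$; either route works.
\end{enumerate}
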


Part (iv) follows from~\cite[Theorem~1.1]{MazetMendes2022}, since $\Sigma_\varepsilon$ is not catenoidal (see the end of the
proof for its hypotheses). The surfaces of~\cite{KozyrevRealization} have catenoidal ends only, so $\Sigma_\varepsilon$ is not among them.

\begin{figure}[ht]
\centering
\begin{subfigure}{0.36\textwidth}\includegraphics[width=\textwidth]{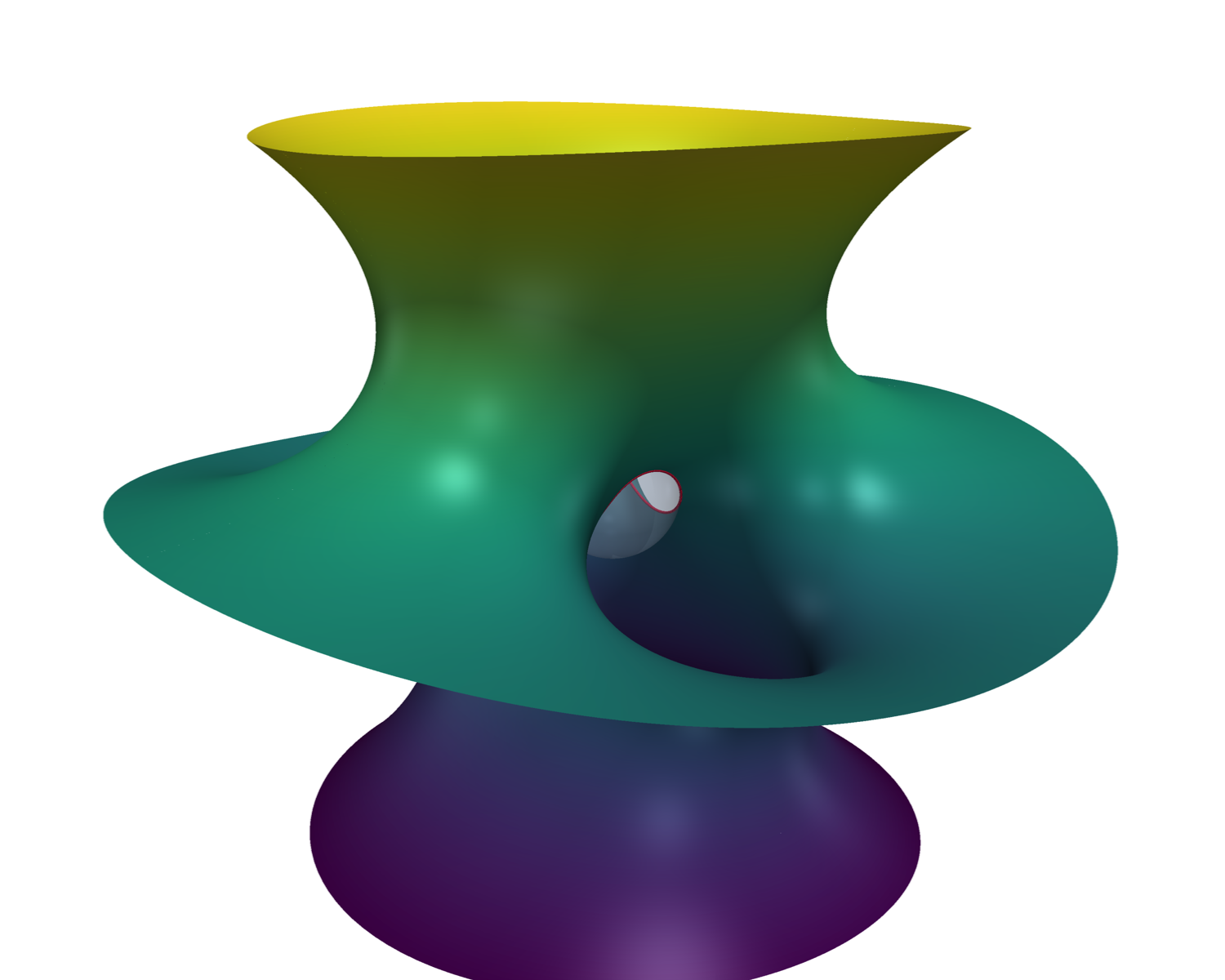}\caption{the surface}\end{subfigure}
\begin{subfigure}{0.31\textwidth}\includegraphics[width=\textwidth]{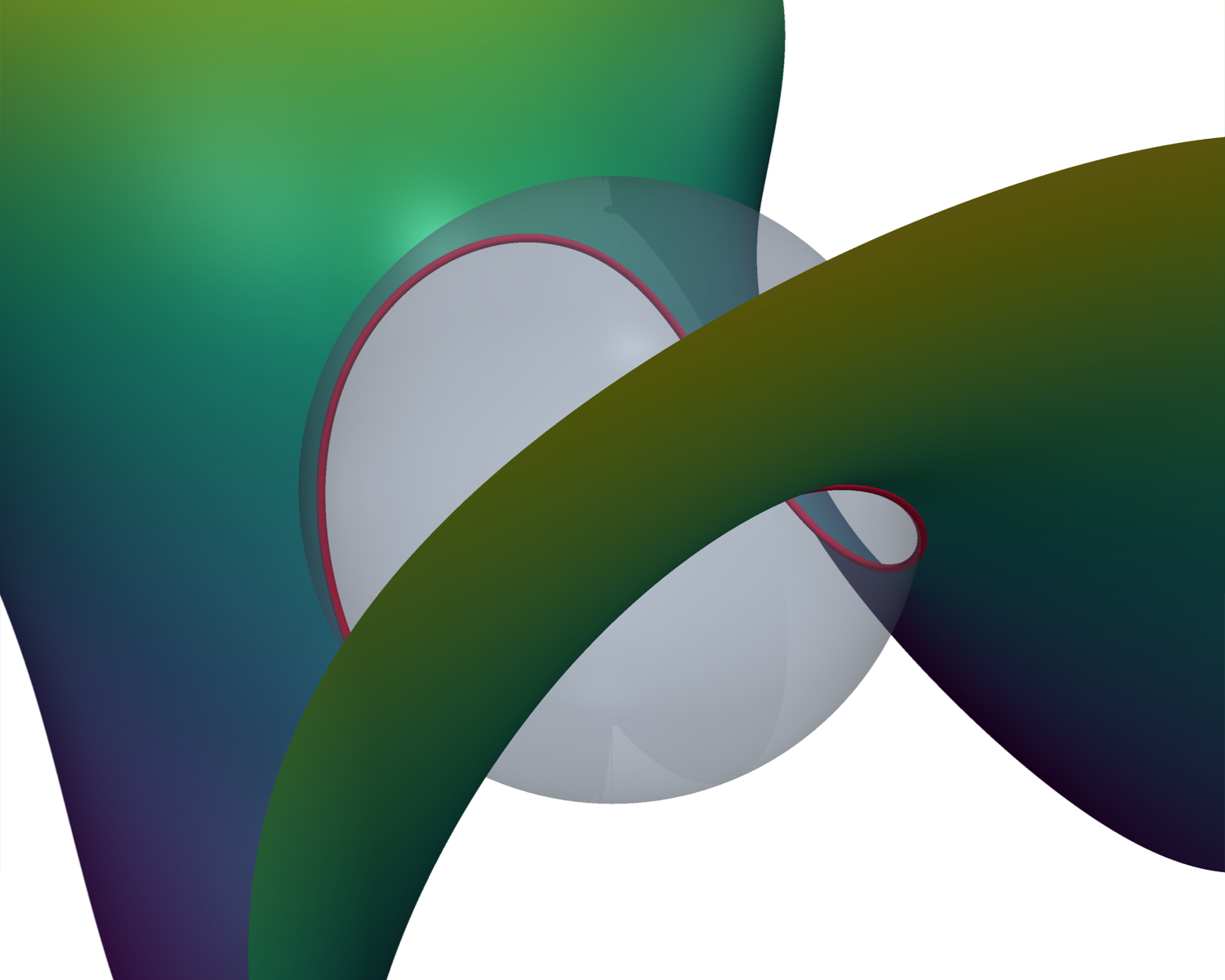}\caption{near the unit sphere}\end{subfigure}
\begin{subfigure}{0.31\textwidth}\includegraphics[width=\textwidth]{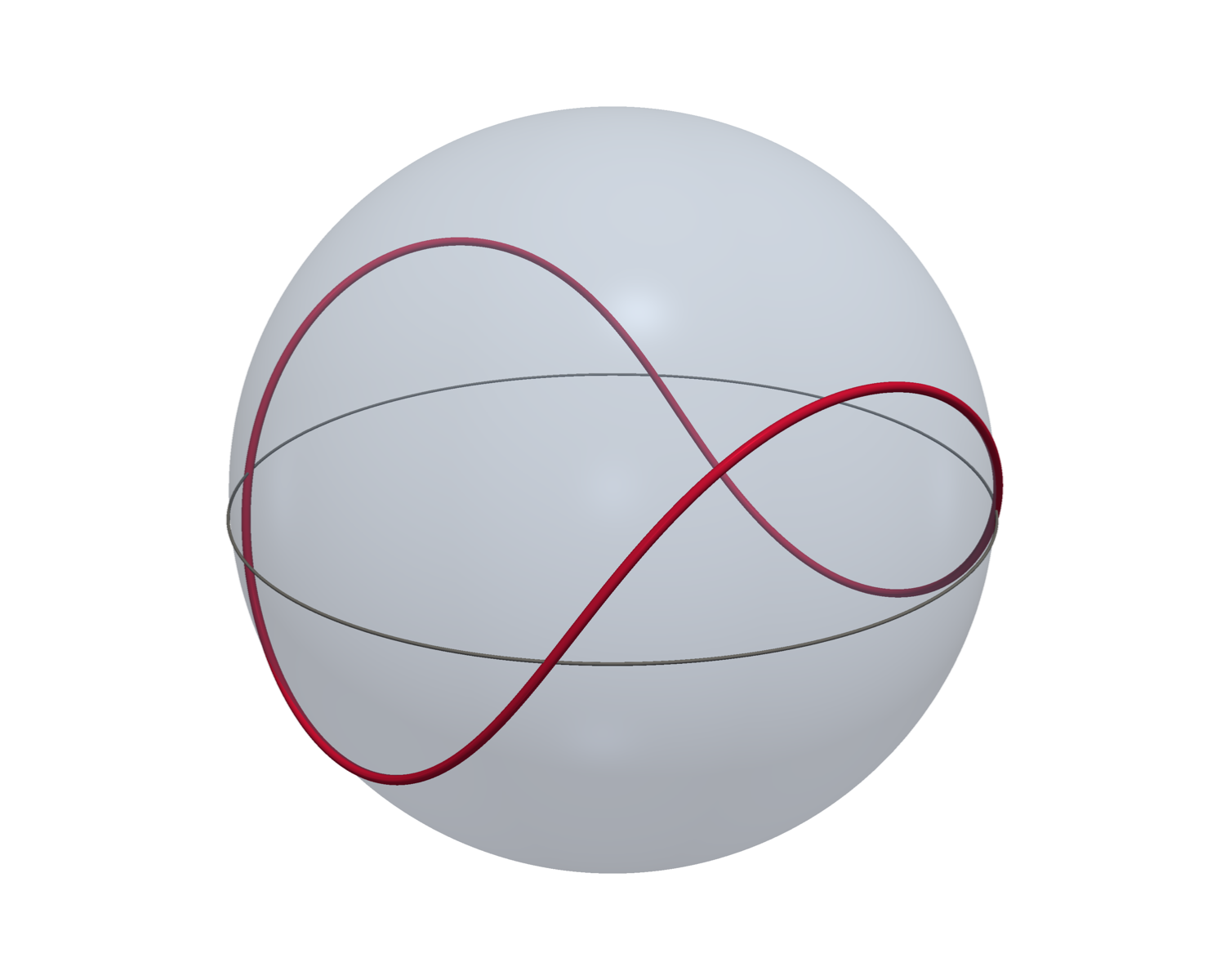}\caption{the boundary curve}\end{subfigure}
\caption{A numerical model based on a local free-boundary collar at $\varepsilon=0.45$ (Section~\ref{sec:num_pictures}).
(a) The Costa surface scaled by $\varepsilon^{-1}$, with the unit ball removed around its saddle point; near the hole it is replaced
by a computed free-boundary minimal collar that meets the unit sphere orthogonally, and farther out it carries a small
correction that is not minimal. The two catenoidal ends point up and down, and the planar end is horizontal. (b) The same near the sphere; the red curve is the free boundary. (c) The
boundary curve alone, with the equator in grey. It reaches latitude $\pm30^\circ$ and crosses the equator at the four points
where the two straight lines of the Costa surface meet the sphere. Theorem~\ref{thm:A} gives $\Sigma_\varepsilon$ for all sufficiently
small $\varepsilon$; for $\varepsilon=0.45$ its existence is not proved, and the distance from the model to it is not estimated. An interactive
three-dimensional version is at \viewer{costa}.}
\label{fig:intro}
\end{figure}

\subsection*{Idea of the proof}
Near its saddle point the Costa surface is the graph of $\frac\kappa2(x_1^2-x_2^2)+O(|x|^4)$, so the sphere
of radius $\varepsilon$ cuts it along an almost equatorial curve, and $M\setminus B_\varepsilon$ fails to be orthogonal to
the sphere only by an angle $\frac\kappa2\varepsilon\cos2\varphi+O(\varepsilon^3)$. The interior equation holds exactly. We
correct the boundary angle by a small perturbation whose boundary stays on the sphere. The linearized
problem is the Jacobi operator of $M$ with a Robin condition on a hole of radius $\varepsilon$. Two kinds of
obstructions could appear: at the hole, harmonic functions of the exterior of a disk of Fourier modes
$n=0,1$ (logarithm and tilts), and globally the bounded Jacobi fields of $M$. The symmetry group removes
both: $G$-equivariant functions have only the modes $n\equiv2\pmod4$ at the saddle point, and no bounded
Jacobi field of the Costa surface is $G$-equivariant. The Costa surface is nondegenerate by
Nayatani~\cite{Nayatani1993}. The linear theory is uniform in $\varepsilon$ because all limiting problems are injective,
and a contraction argument finishes the proof.

The same construction works for the Costa--Hoffman--Meeks surfaces $M_k$ of every genus $k\geq1$~\cite{HoffmanMeeks1990}, which
are nondegenerate by Nayatani and Morabito~\cite{Nayatani1992,Nayatani1993,Morabito2009}. At their central point the
symmetries allow only the Fourier modes $(2j+1)(k+1)$, and the boundary error is of order $\varepsilon^k$.

\begin{introtheorem}\label{thm:B}
Let $k\geq1$, let $M_k$ be the Costa--Hoffman--Meeks surface of genus $k$ in the position of Section~\ref{sec:chm}, with its central
point at the origin, and let $G_k$ be its symmetry group of order $4(k+1)$ described there. There is $\varepsilon_k>0$ such that for
every $\varepsilon\in(0,\varepsilon_k)$ there is a connected properly embedded exterior FBMS $\Sigma_{k,\varepsilon}\subset\R^3\setminus B^3$ with the following
properties.
\begin{enumerate}[label=(\roman*)]
\item $\Sigma_{k,\varepsilon}$ has genus $k$, one boundary circle and three ends: two catenoidal ends and one planar end, all of them
embedded, of multiplicity one and parallel.
\item $\Sigma_{k,\varepsilon}$ is invariant under $G_k$.
\item $\varepsilon\Sigma_{k,\varepsilon}$ is obtained from $M_k\setminus B_\varepsilon$ as in Theorem~\ref{thm:A}(iii), with a displacement of norm at most $C_k\varepsilon^k$, and
$\varepsilon\Sigma_{k,\varepsilon}\to M_k$ smoothly on compact subsets of $\R^3\setminus\{0\}$.
\item The Morse index of $\Sigma_{k,\varepsilon}$ is at least $2k+3$; in particular $\Sigma_{k,\varepsilon}$ is unstable.
\end{enumerate}
For $k=1$, $M_1$ is the Costa surface up to a homothety, $G_1=G$, and Theorem~\ref{thm:B} is Theorem~\ref{thm:A} together with
Proposition~\ref{prop:index}.
\end{introtheorem}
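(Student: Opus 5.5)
We follow the scheme of Theorem~\ref{thm:A} and replace the Costa data by the data of $M_k$. First we set up the geometry near the central point. At the origin $M_k$ is tangent to the horizontal plane. The symmetry group $G_k$ is generated by $k+1$ vertical mirror planes and the rotation by $\pi/(k+1)$ about the vertical axis composed with a reflection in a horizontal line. Hence the Taylor expansion of $M_k$ as a graph is $x_3=\kappa_k\Ree(z^{k+1})+O(|z|^{k+3})$ with $\kappa_k\neq0$; this is also why the genus $k$ surface has a saddle of order $k$ at the centre. Cutting by $\partial B_\varepsilon$ gives an almost equatorial curve. The failure of orthogonality is the radial derivative of the height divided by $\varepsilon$, which is of order $\kappa_k\varepsilon^{k}\cos((k+1)\varphi)$. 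This gives the error size $\varepsilon^k$ in (iii). The interior minimal surface equation holds exactly. We use the same deformation $\Phi^{u/\theta}$, which keeps the boundary on the unit sphere, and rescale by $\varepsilon^{-1}$. The nonlinear problem then becomes: find $G_k$-equivariant $u$ with $H=0$ and a Robin-type angle condition on the hole.

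The linear analysis works on the conformal compactification, which is a compact genus $k$ surface $\overline M_k$. The Jacobi operator $\Delta+|\nabla\nu|^2$ is conformally a Schr\"odinger operator $\Delta_{\bar g}+2K$-type term that is smooth at the three punctures. We remove a conformal disk of radius $\sim\varepsilon$ around the centre, impose the linearized Robin condition on its boundary, and restrict to $G_k$-equivariant functions. Uniform invertibility in weighted H\"older norms is proved by contradiction with blow-up. There are two limiting problems. The first is the exterior of the unit disk in the plane with the Robin condition. Its bounded or slowly growing harmonic solutions have Fourier modes $n$, and equivariance allows only $n\equiv k+1\pmod{2(k+1)}$, i.e.\ $n=(2j+1)(k+1)\geq2$. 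So the logarithm and the tilts are excluded, and for $n\ge2$ the Robin problem is injective, which one checks by explicit computation with $r^{-n}\cos n\varphi$. The second limiting problem is the bounded Jacobi fields on $M_k$ itself. By nondegeneracy (Nayatani, Morabito) these are spanned by the translations, the rotation about the axis, and the field coming from the deformation family. One checks that none of them is $G_k$-equivariant with the correct parity under the reflections: the normal components of the translations and of the rotations are odd under some generator. This second check is the step we expect to be the main obstacle. The relevant Jacobi field must be correctly identified as odd under the generators, which amounts to a careful check of how the generators act on the normal, including the sign flips under the orientation-reversing elements. Once invertibility holds uniformly, a contraction mapping in a ball of radius $C_k\varepsilon^k$ gives $u_\varepsilon$, and smooth convergence follows.

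Embeddedness follows because $M_k$ is embedded and the perturbation is small in $C^{2,\alpha}$, uniformly up to the ends in the compactified picture. The ends keep their types: the two catenoidal ends and the planar end persist with unchanged logarithmic growth, up to small changes. Genus $k$ and the single boundary circle are inherited. For (iv), $M_k$ has index $2k+3$ by Nayatani and Morabito. We take an $(2k+3)$-dimensional space of negative directions on $M_k$. The logarithmic cut-off near the origin costs $o(1)$ in the quadratic form, because capacity is small in dimension two, and the same cut-off takes care of the ends. Transplanting these test functions to $\Sigma_{k,\varepsilon}$ via the graph map keeps the form negative definite for $\varepsilon$ small. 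The boundary term of the free-boundary index form is of size $O(\varepsilon)$ times the trace of the functions, which vanishes after the cut-off. This gives index at least $2k+3$.
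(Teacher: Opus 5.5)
You follow the paper's scheme: the same deformation $\Phi^{u/\theta}$, the same blow-up argument, the same mode count $(2j+1)(k+1)\geq2$ and the same capacity argument for the index. The gap is at the step you flag as the main obstacle. You misidentify the bounded Jacobi fields of $M_k$ and leave the equivariance check open.

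Nondegeneracy (Nayatani for $k\le37$, Morabito for all $k$) says exactly that the bounded Jacobi fields form a space of dimension $4$. There is no additional ``field coming from the deformation family''. The Hoffman--Meeks deformation changes the logarithmic growths of the ends, so its Jacobi field is unbounded and does not lie in $\ker J$ on $\overline M_k$. A fifth independent bounded field would contradict the very dimension count you invoke.

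What you need is the following. The fields $\langle\n,e_i\rangle$, $i=1,2,3$, and $\rho=\langle\n,e_3\times\x\rangle$ are bounded; for $\rho$ this uses that the ends are parallel. They are linearly independent: the Gauss map extends to $\overline M_k$ with degree $k+2$, and $\rho=\langle\n,a\rangle$ is excluded by the reflections $T$, $S^2T$ in two distinct vertical planes together with the fact that $M_k$ is not rotationally symmetric. Hence these four fields span.

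The equivariance check must then cover all linear combinations, not go generator by generator. For instance $\langle\n,e_1\rangle$ is even under $T$ and is not an eigenvector of $S$, so your parity remark does not settle the question. The argument is this. The twisted action $u\mapsto\chi(\sigma)\,u\circ\sigma^{-1}$ sends $\langle\n,a\rangle$ to $\langle\n,\sigma a\rangle$ and $\rho$ to $\chi(\sigma)\det(\sigma)\rho$. For an equivariant $\langle\n,a\rangle+c\rho$, the reflection $T$ (with $\chi=1$, $\det=-1$) forces $c=0$. Then $a$ is fixed by the rotoreflection $S$, whose eigenvalues $e^{\pm i\pi/(k+1)}$ and $-1$ force $a=0$. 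Without this, $(\ker J)^{G_k}=0$ is not established, and neither is Proposition~\ref{prop:J_iso} or the global case of the blow-up.

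There are also two smaller omissions.
\begin{enumerate}
\item The blow-up has a third regime, $\vartheta(p_i)\to0$ with $\vartheta(p_i)/\varepsilon_i\to\infty$. Its limit is a $G_k$-equivariant harmonic function on $\R^2\setminus\{0\}$ with $|u|\le C|y|$. The same mode count excludes it, but it must be treated.
\item For $k\ge2$, $\overline M_k$ carries no flat metric. You need a conformal background metric that is $G_k$-invariant and equal to $|dt|^2$ near the hole. Invariance makes $J$ commute with the twisted action and makes the projection onto equivariant functions orthogonal. The paper gets such a metric by averaging over $G_k$.
\end{enumerate}

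Finally, symmetry alone gives $x_3=\kappa_k\Ree Z^{k+1}+O(|Z|^{k+3})$ but not $\kappa_k\neq0$. This is harmless, since only the upper bound $O(\varepsilon^k)$ on the boundary error is used.
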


\subsection*{Proofs and computations}
Sections~\ref{sec:costa}--\ref{sec:chm} contain the complete proofs of Theorems~\ref{thm:A} and~\ref{thm:B}, and they use no
numerical input. Section~\ref{sec:numerics} describes numerical experiments of two kinds. The first kind checks the
quantitative ingredients of the proof: the period condition, the symmetries, the expansion at the saddle point and the
boundary error, the dimension of the space of bounded Jacobi fields and the absence of an equivariant kernel. The second
kind computes the free-boundary correction near the hole by Newton's method. This gives the size of the correction, its
profile and the pictures. Figures~\ref{fig:intro}--\ref{fig:collar} are explained where they are first used. Figure~\ref{fig:intro}
uses $\varepsilon=0.45$, which is large enough to see the hole but outside the range where the theorem is proved.
Interactive three-dimensional models of the surfaces of Figures~\ref{fig:intro} and~\ref{fig:chm} are at \viewer{costa},
\viewer{chm2} and \viewer{chm3}, and the scripts are attached to the arXiv submission as ancillary files (Section~\ref{sec:num_scripts}).

\subsection*{Organization}
Section~\ref{sec:costa} collects the facts about the Costa surface: its Weierstrass data, the expansion at the saddle
point, the symmetry group $G$, the Fourier modes of $G$-equivariant functions and the absence of $G$-equivariant
bounded Jacobi fields. Section~\ref{sec:perturb} sets up the perturbations whose boundary stays on the sphere and
computes the initial error and the linearization. Section~\ref{sec:linear} proves the uniform invertibility of the
linearized problem, and Section~\ref{sec:nonlinear} solves the nonlinear problem and proves Theorem~\ref{thm:A}.
Section~\ref{sec:chm} proves Theorem~\ref{thm:B} for the Costa--Hoffman--Meeks surfaces. Section~\ref{sec:remarks} contains the
index bound, remarks and open questions, and Section~\ref{sec:numerics} the numerical experiments.

\section{The Costa surface}\label{sec:costa}

\subsection{Weierstrass data and position}
Let $T=\C/(\Z+i\Z)$ be the square torus and $\wp$ its Weierstrass function. Its half-period values are
$\wp(\frac12)=e_1>0$, $\wp(\frac i2)=-e_1$ and $\wp(\omega)=0$, where $\omega=\frac{1+i}2$, and
$\wp'^2=4\wp^3-4e_1^2\wp$. Put $\mathfrak a=2\sqrt{2\pi}\,e_1$ and
\[
\Phi=\Bigl(\tfrac12\bigl(\wp-\mathfrak a^2\tfrac{\wp}{\wp'^2}\bigr),\ \tfrac i2\bigl(\wp+\mathfrak a^2\tfrac{\wp}{\wp'^2}\bigr),\ \mathfrak a\tfrac{\wp}{\wp'}\Bigr)\,dz,
\qquad \x(z)=\Ree\int_\omega^z\Phi .
\]
These are the Weierstrass data $g=\mathfrak a/\wp'$, $dh=\mathfrak a\wp\,dz/\wp'$ in the convention
$\Phi=\bigl(\frac12(g^{-1}-g),\frac i2(g^{-1}+g),1\bigr)dh$, and $\x$ is the Costa surface
$M=\x(T\setminus\{0,\frac12,\frac i2\})$~\cite{Costa1984,HoffmanMeeks1985}: a complete embedded minimal surface of
genus one and total curvature $-12\pi$ with a planar end at $z=0$ and catenoidal ends at $z=\frac12,\frac i2$.
The value of $\mathfrak a$ is forced by the period condition. Indeed $\frac{\wp}{\wp'}dz=\frac1{8e_1}\,d\log\frac{\wp-e_1}{\wp+e_1}$,
so $dh$ has purely imaginary periods and real residues $\pm\mathfrak a/(4e_1)$ at the catenoidal ends; and
$\frac{\wp}{\wp'^2}=\frac{\wp(z+\frac12)-\wp(z+\frac i2)-2e_1}{16e_1^3}$ by the addition formula, so $g\,dh$ and $dh/g=\wp\,dz$
have no residues. The horizontal periods vanish if and only if $\oint g\,dh=\overline{\oint dh/g}$ on both generators of the
homology of $T$. With $\eta_1=\zeta(\frac12)=\frac\pi2$ and $\zeta(\frac i2)=-i\eta_1$ (Legendre's relation for the square lattice) one
finds $\oint_a\wp\,dz=-\pi$, $\oint_b\wp\,dz=i\pi$, $\oint_a\frac{\wp}{\wp'^2}dz=-\frac1{8e_1^2}$, $\oint_b\frac{\wp}{\wp'^2}dz=-\frac i{8e_1^2}$, so both
conditions say $\mathfrak a^2=8\pi e_1^2$. The unit normal is
$\n=(2\Ree g,2\Imm g,|g|^2-1)/(|g|^2+1)$.

Figure~\ref{fig:costa} shows the surface near its saddle point. Figure~\ref{fig:torus} shows the conformal picture: the
fundamental square of $T$, the three ends, the saddle point $\omega$, the curves fixed by the symmetries of
Section~\ref{sec:symmetries} and the level curves of $|\x|$. The disk $D_\varepsilon$ of Section~\ref{sec:perturb} is the region
inside one of these level curves.

\begin{figure}[ht]
\centering
\includegraphics[width=0.62\textwidth]{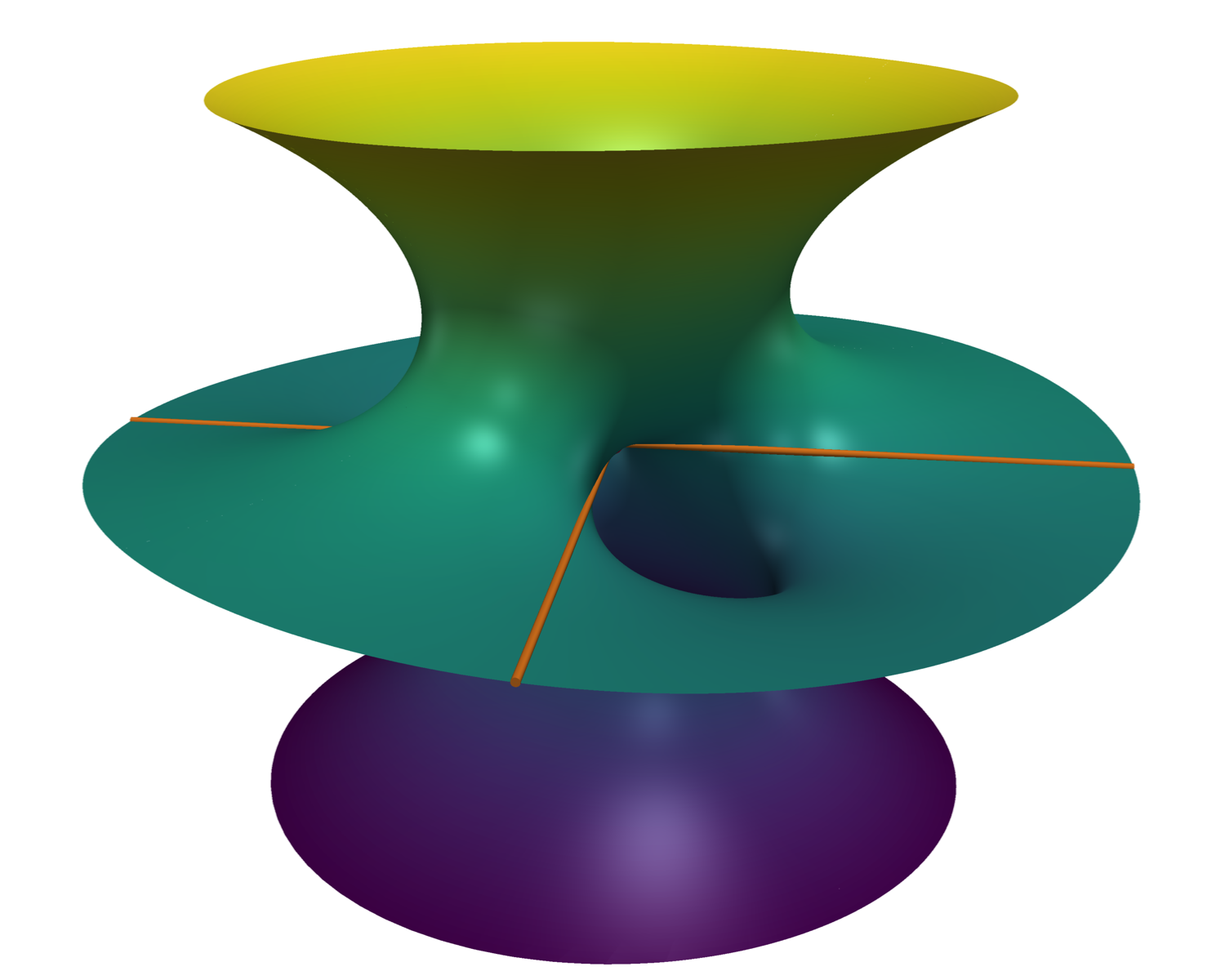}
\caption{The Costa surface $M$ in the position of Section~\ref{sec:costa}, cut off at $|x|=4$ and coloured by height.
The catenoidal ends open upwards and downwards, and the planar end is the horizontal sheet. The orange lines are the
straight lines $\{x_1=\pm x_2,\ x_3=0\}$ contained in $M$, the axes of the rotations $\tau$ and $\sigma_1\sigma_2\tau$; they cross at
the saddle point, the origin. Near the saddle point the planar sheet opens into the two catenoidal ends. Theorem~\ref{thm:A}
removes a small ball around this point.}
\label{fig:costa}
\end{figure}

\begin{figure}[ht]
\centering
\includegraphics[width=0.55\textwidth]{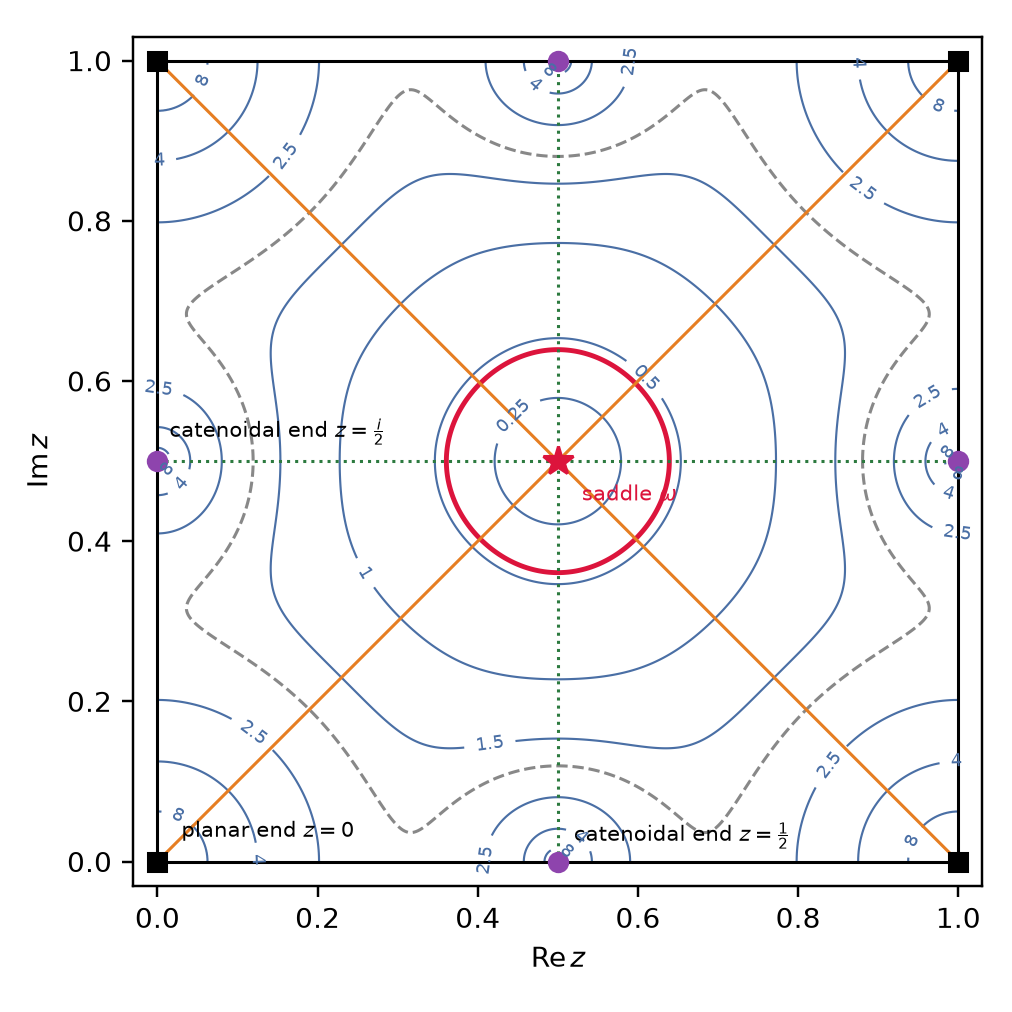}
\caption{The fundamental square of $T=\C/(\Z+i\Z)$ in the coordinate $z$, computed from the Weierstrass data.
Squares: the planar end $z=0$. Dots: the catenoidal ends $z=\frac12$ and $z=\frac i2$. Star: the saddle point
$\omega=\frac{1+i}2$. Blue: level curves of $|\x|$. Red: $|\x|=0.45$, the boundary of the disk $D_\varepsilon$ for the value of
$\varepsilon$ in Figure~\ref{fig:intro}. Dashed: $|\x|=1.883$; on the $1441$ sampled rays from $\omega$, every sublevel set $\{|\x|<\rho\}$ with
$\rho$ below this value is a disk around $\omega$. This is a numerical observation, not a proof (Section~\ref{sec:num_disk}). The diagonals (orange) are fixed by $\iota$ and by $\kappa_1\kappa_2\iota$ and are mapped
onto the two straight lines of Figure~\ref{fig:costa}. The dotted midlines are fixed by $\kappa_1$ and $\kappa_2$ and are mapped
into the symmetry planes $\{x_1=0\}$ and $\{x_2=0\}$.}
\label{fig:torus}
\end{figure}

\begin{lemma}[The saddle point]\label{lem:saddle}
$\x(\omega)=0$, $\n(\omega)=e_3$, and with $w=z-\omega$,
\[
\x(\omega+w)=\bigl(\pi\Ree w,\ \pi\Imm w,\ \tfrac{\mathfrak a}4\Ree w^2\bigr)+O(|w|^3).
\]
Consequently $M$ is, near the origin, the graph $x_3=\frac\kappa2(x_1^2-x_2^2)+O(|x'|^4)$ with
$\kappa=\mathfrak a/(2\pi^2)$, where $x'=(x_1,x_2)$.
\end{lemma}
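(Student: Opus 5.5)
The plan is a direct Taylor expansion of the Weierstrass data at $\omega$. The parity of $\wp$ about the half-period $\omega$ will give the sharper error terms.

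\emph{Value and normal at $\omega$.} First, $\x(\omega)=0$ holds by definition, since the integral starts at $\omega$. Since $\wp(\omega)=0$, the relation $\wp'^2=4\wp^3-4e_1^2\wp$ gives $\wp'(\omega)=0$. Hence $g=\mathfrak a/\wp'$ has a pole at $\omega$, and the formula for $\n$ gives $\n(\omega)=e_3$.

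\emph{Local expansions.} Next I expand $\wp$ near $\omega$. Because $2\omega$ is a lattice point, $\wp(\omega-w)=\wp(w-\omega)=\wp(\omega+w)$, so $w\mapsto\wp(\omega+w)$ is even and $\wp'(\omega+w)$ is odd. From $\wp''=6\wp^2-2e_1^2$ we get $\wp''(\omega)=-2e_1^2$. This yields
\[
\wp(\omega+w)=-e_1^2w^2+O(w^4),\qquad \wp'(\omega+w)=-2e_1^2w+O(w^3).
\]
Therefore $\wp/\wp'^2=-\frac1{4e_1^2}+O(w^2)$, an even function. Using $\mathfrak a^2=8\pi e_1^2$, we get $\mathfrak a^2\wp/\wp'^2=-2\pi+O(w^2)$. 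Substituting into $\Phi$:
\[
\Phi_1=\pi+O(w^2),\qquad \Phi_2=-i\pi+O(w^2),
\]
and both are even in $w$. Also $\Phi_3=\mathfrak a\wp/\wp'=\frac{\mathfrak a}2w+O(w^3)$, which is odd in $w$.

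\emph{Integration.} Integrating from $\omega$ and taking real parts gives
\[
x_1=\pi\Ree w+O(|w|^3),\qquad x_2=\Ree(-i\pi w)+O(|w|^3)=\pi\Imm w+O(|w|^3),
\]
\[
x_3=\tfrac{\mathfrak a}4\Ree w^2+O(|w|^4).
\]
This proves the displayed expansion. Note that for $x_3$ the oddness of $\Phi_3$ upgrades the error to $O(|w|^4)$.

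\emph{Graph form.} Finally I pass to the graph. The map $w\mapsto x'=(x_1,x_2)$ has invertible differential $\pi\,\mathrm{Id}$ at $w=0$, so by the inverse function theorem $w=x'/\pi+O(|x'|^3)$, where $x'$ is identified with $x_1+ix_2$. Then $\Ree w^2=\pi^{-2}(x_1^2-x_2^2)+O(|x'|^4)$, because the cubic correction to $w$ enters $w^2$ only multiplied by $w$. Hence
\[
x_3=\frac{\mathfrak a}{4\pi^2}(x_1^2-x_2^2)+O(|x'|^4),
\]
that is, $\kappa=\mathfrak a/(2\pi^2)$.

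The only delicate point is obtaining the $O(|x'|^4)$ error rather than $O(|x'|^3)$. This rests on two facts: the evenness of $\wp$ about $\omega$, which makes $\Phi_3$ odd, and the observation that cubic errors in $x'$ contribute only quartic errors to $x_3$.
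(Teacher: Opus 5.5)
Your proposal is correct and follows the paper's proof essentially step by step: the evenness of $\wp(\omega+w)$ with $\wp''(\omega)=-2e_1^2$, the resulting expansions of the three components of $\Phi$ via $\mathfrak a^2=8\pi e_1^2$, and the passage to the graph using $x_1+ix_2=\pi w+O(|w|^3)$ together with the evenness of $x_3$ up to $O(|w|^4)$. The only difference is that you derive $\wp'(\omega)=0$ and $\wp''=6\wp^2-2e_1^2$ explicitly from the differential equation, which the paper uses without comment.
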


\begin{proof}
$\wp(\omega+w)$ is even in $w$ with a double zero, since $\wp-e_2$ has a double zero at the half-period
$\omega$ and $e_2=0$; as $\wp''(\omega)=6e_2^2-2e_1^2=-2e_1^2$, we get $\wp=-e_1^2w^2+O(w^4)$ and
$\wp'=-2e_1^2w+O(w^3)$. Hence $\wp/\wp'^2=-1/(4e_1^2)+O(w^2)$ and, as $\mathfrak a^2=8\pi e_1^2$,
$\Phi=(\pi+O(w^2),\,-i\pi+O(w^2),\,\frac{\mathfrak a}2w+O(w^3))\,dw$. Integrating from $w=0$ gives the expansion,
and $g=\mathfrak a/\wp'$ has a pole at $\omega$, so $\n(\omega)=e_3$. The last sentence follows because
$x_1+ix_2=\pi w+O(|w|^3)$ and the third component is even in $w$ up to $O(|w|^4)$.
\end{proof}

\subsection{Symmetries}\label{sec:symmetries}
Let $\sigma_1,\sigma_2,\tau\in O(3)$ be
\[
\sigma_1(x)=(-x_1,x_2,x_3),\qquad \sigma_2(x)=(x_1,-x_2,x_3),\qquad \tau(x)=(x_2,x_1,-x_3),
\]
the reflections in the planes $\{x_1=0\}$, $\{x_2=0\}$ and the rotation by $\pi$ about the line
$\{x_1=x_2,\ x_3=0\}$. They generate a group $G$ of order $8$, which contains the rotoreflection
$S=\sigma_1\tau\colon x\mapsto(-x_2,x_1,-x_3)$. Put $\chi(\sigma)=\langle\sigma e_3,e_3\rangle\in\{\pm1\}$; this is a
homomorphism with $\chi(\sigma_1)=\chi(\sigma_2)=1$ and $\chi(\tau)=-1$.

\begin{lemma}\label{lem:symmetry}
The anti-holomorphic maps of $T$
\[
\kappa_1(z)=1-\bar z,\qquad \kappa_2(z)=\bar z+i,\qquad \iota(z)=i\bar z
\]
fix $\omega$ (note $i\bar\omega=\omega$) and permute the ends ($\iota$ exchanges $\frac12$ and $\frac i2$), and satisfy $\x\circ\kappa_1=\sigma_1\circ\x$, $\x\circ\kappa_2=\sigma_2\circ\x$,
$\x\circ\iota=\tau\circ\x$. Consequently, since $\x$ is injective, for every $\sigma\in G$ there is a unique isometry
$\kappa_\sigma$ of the flat torus with $\x\circ\kappa_\sigma=\sigma\circ\x$, and $\sigma\mapsto\kappa_\sigma$ is a homomorphism (only the
generators need be anti-holomorphic; e.g. $\kappa_S(z)=iz$). Moreover $\n\circ\kappa_\sigma=\chi(\sigma)\,\sigma\circ\n$. In particular $M$ is $G$-invariant, and $G$ fixes the origin.
\end{lemma}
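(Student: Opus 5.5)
The strategy is to pull each generator back to the torus, where it acts on the Weierstrass data, and then transfer the resulting identities to $\x$ by integrating from the fixed base point $\omega$.

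\textbf{Fixed point and ends.} I would first check directly that each map is a well-defined anti-holomorphic isometry of $T=\C/(\Z+i\Z)$. The lattice is preserved by $z\mapsto\bar z$ and by $z\mapsto iz$, so this is immediate. Next, each map fixes $\omega$: $\kappa_1(\omega)=1-\frac{1-i}2=\omega$, $\kappa_2(\omega)=\frac{1-i}2+i=\omega$, and $\iota(\omega)=\omega$. Finally, each map sends the set $\{0,\frac12,\frac i2\}$ to itself modulo the lattice.

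\textbf{Transformation of the Weierstrass data.} The lattice is real and invariant under multiplication by $i$, so $\wp(\bar z)=\overline{\wp(z)}$, $\wp(iz)=-\wp(z)$ and $\wp(1-z)=\wp(z)$. Differentiating gives the corresponding rules for $\wp'$. From these I would compute the pullbacks:
\begin{itemize}
\item Under $\kappa_1$, $\wp\mapsto\bar\wp$ and $\wp'\mapsto-\bar\wp'$. Hence $g\circ\kappa_1=-\bar g$, while $\kappa_1^*dh=\overline{dh}$ as a form, because the signs from $\wp'$ and from $d(1-\bar z)=-d\bar z$ cancel.
\item Under $\kappa_2$, $\wp\mapsto\bar\wp$ and $\wp'\mapsto\bar\wp'$. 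Hence $g\mapsto\bar g$ and $dh\mapsto\overline{dh}$.
\item Under $\iota$, $\wp(i\bar z)=-\bar\wp$ and $\wp'(i\bar z)=-i\bar\wp'$. Hence $g\mapsto i\bar g^{-1}\cdot(\text{sign})$ and $dh\mapsto-\overline{dh}$.
\end{itemize}
Substituting into $\Phi=\bigl(\frac12(g^{-1}-g),\frac i2(g^{-1}+g),1\bigr)dh$ should give $\kappa^*\Phi=\sigma\overline{\Phi}$ componentwise, with the matrix of $\sigma_1$, $\sigma_2$ or $\tau$ respectively. Since every $\kappa$ fixes $\omega$, where $\x(\omega)=0$, integrating from $\omega$ yields $\x\circ\kappa=\sigma\circ\x$. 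The only care needed is the sign bookkeeping for $\iota$. As a cross-check I would compare with Lemma~\ref{lem:saddle}: there $\x\approx(\pi\Ree w,\pi\Imm w,\frac{\mathfrak a}4\Ree w^2)$ with $w\mapsto i\bar w$, which sends $(a,b,c)\mapsto(b,a,-c)$. This confirms $\tau$ and fixes any ambiguous sign.

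\textbf{Extension to $G$, uniqueness and normals.} $\x$ is injective because $M$ is embedded (Costa--Hoffman--Meeks). So if $\sigma\circ\x=\x\circ\kappa$, then $\kappa=\x^{-1}\circ\sigma\circ\x$ on the punctured torus, and by continuity on all of $T$; this gives uniqueness. Setting $\kappa_\sigma$ to be the corresponding composition of generators then gives a well-defined homomorphism, by uniqueness. Compositions of isometries are isometries. For instance $\kappa_S=\kappa_1\iota$ gives $z\mapsto1-\overline{i\bar z}=1+iz\equiv iz$ up to a translation fixing $\omega$, which I would verify.

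For the normal, I would use the transformed $g$ in $\n=(2\Ree g,2\Imm g,|g|^2-1)/(|g|^2+1)$:
\begin{itemize}
\item $g\mapsto-\bar g$ gives $\sigma_1\n$;
\item $g\mapsto\bar g$ gives $\sigma_2\n$;
\item $g\mapsto\pm i/\bar g$ gives $-\tau\n$, since inversion flips the sign of the third component.
\end{itemize}
This gives $\n\circ\kappa_\sigma=\chi(\sigma)\sigma\n$ on the generators, and it extends to all of $G$ because $\chi$ is a homomorphism.

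\textbf{Main obstacle.} The expected main difficulty is the sign bookkeeping for $\iota$, specifically the factor $(-i)^{-1}$ in $g$ combined with the sign of $dh$. I would resolve this with the local expansion at $\omega$.
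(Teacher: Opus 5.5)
Your handling of $\kappa_1$ and $\kappa_2$, of uniqueness and the homomorphism, and your route for the position (pull $\Phi$ back, get $\kappa^*\Phi=\sigma\overline{\Phi}$, integrate from the fixed point $\omega$) all agree with the paper. The gap is the $\iota$ case, which is exactly the step you flag, and it is not a sign ambiguity.

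Differentiating $\wp(iz)=-\wp(z)$ gives $\wp'(iz)=i\wp'(z)$, so $\wp'(i\bar z)=+i\,\overline{\wp'(z)}$. Whatever the sign, $g\circ\iota=\mathfrak a/\wp'(i\bar z)$ is a constant times $\bar g$, never a constant times $1/\bar g$. It has to be: $\iota$ fixes $\omega$, where $g$ has a pole, and it fixes the planar end $0$, where $g$ vanishes. Your rule $g\circ\iota=\pm i/\bar g$ together with $\iota^*dh=-\overline{dh}$ gives $\iota^*\Phi=(\mp\overline{\Phi_2},\pm\overline{\Phi_1},-\overline{\Phi_3})$. That means $\x\circ\iota=S^{\pm1}\circ\x$, not $\tau\circ\x$, for either choice of sign. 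Likewise $g\mapsto\pm i/\bar g$ sends $\n$ to $(\mp n_2,\pm n_1,-n_3)=S^{\pm1}\n$, which is neither $\tau\n$ nor $-\tau\n$. Inversion does flip $n_3$, but $\chi(\tau)\tau\n=(-n_2,-n_1,n_3)$ keeps it, as it must, since $\n(\omega)=e_3$ at the fixed point $\omega$. So the local expansion at $\omega$ cannot select a sign for your formulas; it would only expose the contradiction.

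The correct bookkeeping is $g\circ\iota=-i\bar g$ and $\iota^*dh=\mathfrak a(-\overline{\wp})\,i\,d\bar z/(i\,\overline{\wp'})=-\overline{dh}$. Hence $\iota^*(g\,dh)=i\,\overline{g\,dh}$ and $\iota^*(g^{-1}dh)=-i\,\overline{g^{-1}dh}$. These give $\iota^*\Phi=(\overline{\Phi_2},\overline{\Phi_1},-\overline{\Phi_3})$ and so $\x\circ\iota=\tau\circ\x$. The map $g\mapsto-i\bar g$ is the reflection of the sphere of directions in the vertical plane $\{x_1=-x_2\}$, and it gives $\n\circ\iota=-\tau\n$.

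The paper writes the horizontal components through $\wp$ and $\wp/\wp'^2$, which only involve $\wp'^2$; only $\Phi_3$ is sensitive to the sign of $\wp'$. For the normal it avoids the $g$-bookkeeping altogether. Since $\x\circ\kappa_\sigma=\sigma\circ\x$, both $\n\circ\kappa_\sigma$ and $\sigma\n$ are unit normals of $M$ at the same point. They therefore agree up to one global sign on the connected surface, and evaluating at $\omega$ gives $\chi(\sigma)$. That argument covers all of $G$ at once and would have caught the error above. For the position, though, you still need the correct pull-back of $\Phi$ under $\iota$.
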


\begin{proof}
The lattice is real and square, so $\wp(\bar z)=\overline{\wp(z)}$, $\wp(iz)=-\wp(z)$ and $\wp'(iz)=i\wp'(z)$.
For $\kappa_2$: $\wp(\kappa_2z)=\overline{\wp(z)}$, $\wp'(\kappa_2z)=\overline{\wp'(z)}$ and $d(\kappa_2z)=d\bar z$, so
$\kappa_2^*\Phi=(\overline{\Phi_1},-\overline{\Phi_2},\overline{\Phi_3})$, because $\mathfrak a$ is real. Integrating from the fixed
point $\omega$ gives $\x\circ\kappa_2=(x_1,-x_2,x_3)$. For $\kappa_1$: $\wp(1-\bar z)=\overline{\wp(z)}$,
$\wp'(1-\bar z)=-\overline{\wp'(z)}$ and $d(1-\bar z)=-d\bar z$, so $\kappa_1^*\Phi=(-\overline{\Phi_1},\overline{\Phi_2},\overline{\Phi_3})$.
For $\iota$: $\wp(i\bar z)=-\overline{\wp(z)}$, $\wp'(i\bar z)=i\,\overline{\wp'(z)}$ and $d(i\bar z)=i\,d\bar z$; hence
$\wp/\wp'^2$ is mapped to $\overline{\wp/\wp'^2}$ and $\iota^*\Phi=(\overline{\Phi_2},\overline{\Phi_1},-\overline{\Phi_3})$, which gives
$\x\circ\iota=(x_2,x_1,-x_3)$. Finally $\n\circ\kappa_\sigma$ and $\sigma\circ\n$ are both unit normals of $M$ at the
same points, so they agree up to a global sign on the connected surface; at $\omega$ they are $e_3$ and
$\sigma e_3=\chi(\sigma)e_3$.
\end{proof}

We call a function $u$ on (a $G$-invariant subset of) $M$, or of $T$, \emph{$G$-equivariant} if
\[
u(\sigma p)=\chi(\sigma)\,u(p)\qquad\text{for all }\sigma\in G,
\]
and denote such classes by a superscript $G$. By Lemma~\ref{lem:symmetry}, the normal graph of $u$ over $M$ is
$G$-invariant if $u$ is $G$-equivariant; the same holds for the graphs of Section~\ref{sec:perturb}.

\begin{lemma}[Fourier modes]\label{lem:modes}
Let $u$ be $G$-equivariant on a punctured neighbourhood of the saddle point, and write it in polar
coordinates $(r,\varphi)$ of the horizontal projection, or of the coordinate $w$. Then
$u=\sum_{n\equiv2\ (4)}u_n(r)\cos n\varphi$. The same holds on the planar end, in polar coordinates of its
horizontal projection, and in the conformal coordinate $z$ at $z=0$, where $\kappa_1,\kappa_2,\iota$ act by
$z\mapsto-\bar z,\bar z,i\bar z$; in particular a $G$-equivariant $C^2$ function on $\overline T$ vanishes at $z=0$ together with its
gradient. On the catenoidal end at $z=\frac12$ only the cosines of even frequencies occur, and $u$
on the other catenoidal end is determined by $u(\tau p)=-u(p)$.
\end{lemma}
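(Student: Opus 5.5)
The plan is to compute how each generator of $G$ acts on the polar coordinates at each of the three points, and then impose $u(\sigma p)=\chi(\sigma)u(p)$ on the Fourier series one generator at a time.

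\textbf{At the saddle point.} Lemma~\ref{lem:saddle} gives $x_1+ix_2=\pi w+O(|w|^3)$, so the horizontal projection is a diffeomorphism near $0$. Lemma~\ref{lem:symmetry} shows that $G$ acts on $x'$ linearly, and on $w$ exactly:
\begin{itemize}
\item $\sigma_1$ sends $(r,\varphi)$ to $(r,\pi-\varphi)$, and $\kappa_1$ sends $w$ to $-\bar w$;
\item $\sigma_2$ sends $\varphi$ to $-\varphi$, and $\kappa_2$ sends $w$ to $\bar w$;
\item $\tau$ sends $\varphi$ to $\pi/2-\varphi$, and $\iota$ sends $w$ to $i\bar w$.
\end{itemize}
In either chart the action is the same linear action on angles, and $r$ is invariant. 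Write $u=a_0(r)+\sum_{n\ge1}\bigl(a_n(r)\cos n\varphi+b_n(r)\sin n\varphi\bigr)$.
\begin{itemize}
\item Invariance under $\sigma_2$, with $\chi=1$, kills all $b_n$.
\item Invariance under $\sigma_1$ requires $\cos n(\pi-\varphi)=(-1)^n\cos n\varphi$ to equal $\cos n\varphi$, so $n$ is even.
\item Under $\tau$, with $\chi=-1$, we need $\cos n(\pi/2-\varphi)=-\cos n\varphi$. For even $n=2m$ the left side is $(-1)^m\cos n\varphi$, so $m$ must be odd, that is $n\equiv2\pmod 4$.
\end{itemize}
This also excludes $n=0$.

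\textbf{At the planar end $z=0$.} Here $\kappa_1,\kappa_2,\iota$ act by $z\mapsto -\bar z,\bar z,i\bar z$. This is checked modulo the lattice: $1-\bar z\equiv-\bar z$ and $\bar z+i\equiv\bar z$. The same computation then applies in the coordinate $z$.

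For the horizontal projection on the planar end, I would note that $\x$ is asymptotic to a horizontal plane there. The maps $\sigma_1,\sigma_2,\tau$ act on the horizontal coordinates exactly as at the saddle point, and $\chi$ is the same homomorphism, so the same argument applies.

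\textbf{$C^2$ functions on $\overline T$.} A $G$-equivariant $C^2$ function near $z=0$ has a Taylor polynomial of degree $\le 1$ that is itself equivariant under this linear action. The modes $n=0,1$ are excluded, so $u(0)=0$ and $\nabla u(0)=0$. Concretely, $u(0)=\chi(\tau)u(0)=-u(0)$, and the linear part, a combination of $\cos\varphi$ and $\sin\varphi$, must vanish.

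\textbf{At the catenoidal ends.} At $z=\frac12$ the relevant stabilizer consists of the elements fixing $\frac12$: $\kappa_1$ ($1-\tfrac12=\tfrac12$) and $\kappa_2$ ($\tfrac12+i\equiv\tfrac12$), that is $\sigma_1,\sigma_2$, both with $\chi=1$. With a local coordinate $\zeta=z-\frac12$ they act by $\zeta\mapsto-\bar\zeta$ and $\zeta\mapsto\bar\zeta$. The horizontal projection of the catenoidal end is asymptotically a rescaled $\zeta^{-1}$ up to rotation, so the action on angles is again the reflections $\varphi\mapsto-\varphi$ and $\varphi\mapsto\pi-\varphi$. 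By the same argument as at the saddle point, this leaves only the cosines of even frequencies. The element $\tau$ (via $\iota$) swaps $\frac12$ and $\frac i2$, so $u(\tau p)=-u(p)$ determines $u$ on the other end.

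\textbf{Main obstacle.} The only delicate point is to confirm that the chosen polar angle is the one on which these linear maps act. I would handle it by using the $z$ or $\zeta$ coordinate, where the action is exact, and only then transferring to horizontal projections. That transfer is valid because the projections are equivariant diffeomorphisms with the same angular action.
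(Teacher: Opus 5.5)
Your proposal is correct and is essentially the paper's argument. You record that $\sigma_2,\sigma_1,\tau$ act on the angle by $\varphi\mapsto-\varphi,\ \pi-\varphi,\ \frac\pi2-\varphi$ with $\chi=1,1,-1$, impose equivariance mode by mode, use the stabilizer $\{\sigma_1,\sigma_2\}$ at $z=\frac12$, and use that $\tau$ swaps the catenoidal ends; you also make explicit the checks in the coordinates $w$ and $z$ and the vanishing of $u(0)$ and $\nabla u(0)$, which the paper leaves implicit. The detour through the asymptotics $x'\approx c\,\zeta^{-1}$ at the catenoidal end is unnecessary, since $\sigma_1,\sigma_2$ act exactly and linearly on the horizontal projection.
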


\begin{proof}
Near the saddle, $\sigma_2$, $\sigma_1$ and $\tau$ act on the angle by $\varphi\mapsto-\varphi$, $\varphi\mapsto\pi-\varphi$ and
$\varphi\mapsto\frac\pi2-\varphi$, with $\chi=1,1,-1$. Invariance under the first gives cosines only, under the second
$(-1)^n=1$, and $\cos n(\frac\pi2-\varphi)=(-1)^{n/2}\cos n\varphi$ for even $n$, so the third forces $n/2$ odd. The planar
end is fixed by $G$ and is a graph over the exterior of a disk in $\{x_3=0\}$, where $G$ acts in the same way.
The catenoidal end at $\frac12$ is preserved by $\sigma_1,\sigma_2$ only, which gives even cosines, and $\tau$ exchanges the
two catenoidal ends.
\end{proof}

\subsection{Nondegeneracy}
By Fischer-Colbrie~\cite{FischerColbrie1985}, the bounded Jacobi fields of $M$ are the restrictions to $M$ of the null
space of a Schr\"odinger operator on the compactified torus (Section~\ref{sec:linear}), and Nayatani computed
its dimension.

\begin{theorem}[Nayatani~\cite{Nayatani1993}]\label{thm:nayatani}
The space of bounded solutions of the Jacobi equation $Lu=\Delta_Mu+|A|^2u=0$ on the Costa surface has
dimension $4$.
\end{theorem}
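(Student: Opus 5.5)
The statement is Nayatani's theorem, which the paper quotes. A self-contained proof would follow Nayatani's route, and I will organise the plan around the Gauss map. The idea is that bounded Jacobi fields of a complete minimal surface of finite total curvature are the same as $L^\infty$ eigenfunctions, with eigenvalue $2$, of the Laplacian on the sphere pulled back by the Gauss map. By Fischer-Colbrie and the conformal invariance of $\Delta$, the metric $g^*ds^2_{S^2}$ turns the equation $Lu=0$ into
\[
\Delta_{g^*S^2}u+2u=0
\]
on $T\setminus\{\text{branch points}\}$. Bounded solutions extend across the ends and across the branch points of $g$. So the problem becomes computing the nullity of the operator $\Delta+2$ on the branched covering $g\colon T\to S^2$, which has degree $3$.

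The four-dimensional lower bound is explicit.
\begin{itemize}
\item The three translation fields $\langle\n,e_j\rangle$, for $j=1,2,3$, are bounded solutions.
\item The rotation about the $x_3$-axis gives $\langle\n,e_3\times\x\rangle$. This is bounded because all ends are parallel to the vertical direction, so $\n\to\pm e_3$ and $e_3\times\x$ stays horizontal with $\langle\n,e_3\times\x\rangle=O(1)$. More precisely, on catenoidal ends it is asymptotic to the rotation field of the catenoid, which vanishes, and on the planar end it decays.
\end{itemize}
One checks that these four fields are linearly independent; the vertical ones separate by their values at the ends.

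For the upper bound I would use the Montiel--Ros correspondence. Bounded Jacobi fields beyond the linear ones correspond bijectively to complete branched minimal surfaces with planar ends having Gauss map $g$. Equivalently, they correspond to meromorphic height differentials $dh'$ such that $(g^{-1}dh',\ g\,dh',\ dh')$ has no real periods, with at most simple poles located at the poles and zeros of $g$ in the appropriate way. The space of admissible $dh'$ is then computed by Riemann--Roch on $T$ using the explicit divisor of $g=\mathfrak a/\wp'$: a triple pole at $0$ and simple zeros at $\omega,\frac12,\frac i2$. The period and residue conditions are imposed as real linear equations.

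The main obstacle is showing that these equations leave no solution beyond the trivial one $dh'=dh$, which corresponds to $\langle\n,e_3\rangle$. I would try to show this with the symmetry group and the explicit periods $\oint\wp\,dz$ and $\oint\wp/\wp'^2\,dz$ computed in Section~\ref{sec:costa}. Splitting the candidate differentials into isotypic components of $G$ reduces the check to a few $1\times1$ or $2\times2$ period determinants. Each determinant should be nonzero thanks to Legendre's relation $\eta_1=\frac\pi2$ and the identity $\mathfrak a^2=8\pi e_1^2$.
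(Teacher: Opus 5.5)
The paper does not prove this statement. It quotes Nayatani and only confirms the count numerically (Section~\ref{sec:num_spectrum}), so your plan has to stand on its own. The framework you chose, Fischer-Colbrie's reduction plus the Montiel--Ros correspondence, is the right one, and the lower bound is essentially fine. The one weak point there is the independence of $\rho=\langle\n,e_3\times\x\rangle$ from $\langle\n,e_1\rangle,\langle\n,e_2\rangle$: values at the ends cannot settle it, since all three tend to $0$ there. Use $\sigma_1,\sigma_2$ as in Lemma~\ref{lem:no_kernel}. The upper bound, however, rests on a misidentification. In the Montiel--Ros correspondence the linear fields $\langle\n,a\rangle$ come from constant maps, i.e.\ $dh'=0$. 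The choice $dh'=dh$ is not admissible at all: it gives back the Costa surface, whose ends at $\tfrac12,\tfrac i2$ are catenoidal ($dh$ has the real residues $\pm\mathfrak a/(4e_1)$ there) and whose support function $\langle\x,\n\rangle$ grows like $\log r$. So if your equations left only the solution you call trivial, you would get nullity $3$, contradicting your own lower bound. The fourth field must itself appear as an admissible $dh'$. Indeed $\rho=\Ree\bigl(\Psi'-\frac{2\bar g}{1+|g|^2}\Psi\bigr)$, with derivatives taken in $g$ and $\Psi=\frac i2\bigl(\int g\,dh-g^2\int g^{-1}dh\bigr)$. The corresponding height differential is $g\,\Psi'''\,dg=-i\bigl(dh+g\,d(dh/dg)\bigr)=i\mathfrak a\,d(\wp/\wp'')$.

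This differential also shows where your local conditions go wrong. The divisor of $g=\mathfrak a/\wp'$ is the reverse of what you wrote: a triple zero at $0$ and simple poles at $\omega,\tfrac12,\tfrac i2$. Ends of the branched surface can only sit at branch points of $g$, because at an unbranched point a double pole of $\Phi'=(\tfrac12(g^{-1}-g),\tfrac i2(g^{-1}+g),1)\,dh'$ always carries a nonzero residue. These branch points are $0$ and the four simple zeros $q_1,\dots,q_4$ of $\wp''=6\wp^2-2e_1^2$, which your plan never mentions. At the unbranched poles of $g$, holomorphy of $g\,dh'$ forces $dh'$ to vanish. So with poles only at zeros and poles of $g$ you would get a holomorphic differential on $T$ vanishing at $\omega$, that is, $dh'=0$. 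The right space is $L\bigl(2\sum_jq_j-[0]-[\omega]-[\tfrac12]-[\tfrac i2]\bigr)\,dz$, of complex dimension $4$, cut down by vanishing residues at the $q_j$; it contains the exact differentials $d(\wp/\wp'')$ and $d(1/\wp'')$. On the line $c\,d(\wp/\wp'')$, integration by parts and the periods of Section~\ref{sec:costa} turn the horizontal period conditions on both cycles into $-\mathfrak a^2c=8\pi e_1^2\,\bar c$. This has the solutions $c\in i\R$ precisely because $\mathfrak a^2=8\pi e_1^2$. So the Costa identity makes this determinant vanish, not nonzero as you expect. The real content of the upper bound is to show that nothing else in this space satisfies the six real period conditions: not the direction $d(1/\wp'')$, not its combinations with $d(\wp/\wp'')$, and not any residue-free differential that is not exact. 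Your plan offers no argument for this.
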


\begin{lemma}\label{lem:no_kernel}
No nonzero bounded Jacobi field of $M$ is $G$-equivariant.
\end{lemma}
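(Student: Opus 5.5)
Since Theorem~\ref{thm:nayatani} says the bounded Jacobi fields form a $4$-dimensional space, the plan is to identify this space explicitly and then decompose it under $G$. The obvious candidates are the Jacobi fields coming from ambient motions: the translations give $\langle\n,e_1\rangle,\langle\n,e_2\rangle,\langle\n,e_3\rangle$, which are bounded, and the rotation about the vertical axis gives $\langle\x\times\n,e_3\rangle=x_1n_2-x_2n_1$, which is bounded because all ends are vertical and the horizontal component of $\n$ decays at the ends like $|\x|^{-1}$ (catenoidal ends, in terms of $|x'|$) and faster (planar end). I will first check that these four functions are linearly independent. A nontrivial combination vanishing identically would mean that $M$ is invariant under a one-parameter family of Euclidean motions, hence a plane, catenoid or helicoid, which is false. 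By Theorem~\ref{thm:nayatani} they then span the space of bounded Jacobi fields.

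Next I compute how $G$ acts on these fields, using Lemma~\ref{lem:symmetry}. For a Jacobi field $u$ the transformed field is $(\sigma\cdot u)(p)=\chi(\sigma)u(\kappa_\sigma^{-1}p)$, and $u$ is $G$-equivariant iff $\sigma\cdot u=u$ for all $\sigma$. For a translation field $u_v=\langle\n,v\rangle$ we get $u_v(\kappa_\sigma p)=\langle\chi(\sigma)\sigma\n(p),v\rangle=\chi(\sigma)u_{\sigma^{-1}v}(p)$. So equivariance of $u_v$ forces $u_{\sigma^{-1}v}=u_v$, which by injectivity of $v\mapsto u_v$ means $\sigma v=v$ for all $\sigma\in G$. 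The only vector fixed by $\sigma_1,\sigma_2,\tau$ is $0$: $\sigma_1,\sigma_2$ kill $v_1,v_2$, and $\tau$ kills $v_3$.

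For the rotation field $R=\langle\x\times\n,e_3\rangle=\det(\x,\n,e_3)$ I use $\x\circ\kappa_\sigma=\sigma\x$ and $\n\circ\kappa_\sigma=\chi(\sigma)\sigma\n$ to get
\[
R\circ\kappa_\sigma=\chi(\sigma)\det(\sigma\x,\sigma\n,e_3)=\chi(\sigma)\det\sigma\cdot\det(\x,\n,\sigma^{-1}e_3)=\det\sigma\,R.
\]
Taking $\sigma=\sigma_1$ gives $R\circ\kappa_1=-R$, while equivariance demands $R\circ\kappa_1=+R$. A general bounded Jacobi field is $u=u_v+cR$. Each generator maps the span of translation fields to itself and maps $R$ to a multiple of $R$, so the conditions decouple: equivariance forces $v=0$ and $c=0$.

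The main obstacle is the first step, identifying Nayatani's four-dimensional space with the span of these ambient fields. Boundedness of $R$ and linear independence need an honest argument. I would argue via Lemma~\ref{lem:modes}, or directly from the Weierstrass data: at the catenoidal ends $g\to0,\infty$ at a simple zero or pole, so $|x_1n_2-x_2n_1|\le|\x'|\,|n'|$ stays bounded. For independence, evaluating at the saddle point, where $\n=e_3$, $R(\omega)=0$ and $u_{e_3}(\omega)=1$, and then using gradients there, separates the four fields. As a sanity check on the conclusion, no equivariant kernel should be compatible with Lemma~\ref{lem:modes}, since every equivariant function must vanish to second order at $z=0$.
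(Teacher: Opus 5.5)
Your proposal is correct and is essentially the paper's proof. You identify Nayatani's four-dimensional space with the span of $\langle\n,e_i\rangle$ and the rotation field, compute the twisted action ($\langle\n,v\rangle\circ\kappa_\sigma=\chi(\sigma)\langle\n,\sigma^{-1}v\rangle$ and $R\circ\kappa_\sigma=\det(\sigma)\,R$), use a reflection with $\chi=1$, $\det=-1$ to kill the coefficient of $R$, and note that no nonzero vector is fixed by $G$. The one soft spot is linear independence. Since $R=2\kappa x_1x_2+O(|x'|^4)$ has vanishing value and gradient at $\omega$, your jet argument needs the Hessian there to show $R\not\equiv0$. Your list ``plane, catenoid or helicoid'' also omits other screw-invariant minimal surfaces; finite total curvature excludes them. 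This is why the paper first uses $\sigma_1,\sigma_2$ to show that $R=\langle\n,a\rangle$ forces $a=0$, which leaves only the rotationally symmetric case to exclude.
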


\begin{proof}
The functions $\langle\n,a\rangle$, $a\in\R^3$, and $\rho=\langle\n,e_3\times\x\rangle$ are Jacobi fields, generated by
translations and by the rotation about the $x_3$-axis. They are bounded: this is clear for $\langle\n,a\rangle$, and on each
end, which is a graph $x_3=a_j\log r+b_j+O(r^{-1})$ with $\nabla x_3=a_jx'/r^2+O(r^{-2})$ (Schoen~\cite{Schoen1983}), the
horizontal part of $\n$ is $-a_jx'/r^2+O(r^{-2})$, which is orthogonal to $e_3\times\x=(-x_2,x_1,0)$ up to
$O(r^{-1})$. For $\sigma\in G$, by Lemma~\ref{lem:symmetry},
$\langle\n(\sigma p),a\rangle=\chi(\sigma)\langle\n(p),\sigma^{-1}a\rangle$, and, since
$\sigma a\times\sigma b=\det(\sigma)\,\sigma(a\times b)$ and $\sigma e_3=\chi(\sigma)e_3$,
$\rho(\sigma p)=\chi(\sigma)\langle\sigma\n,\chi(\sigma)\det(\sigma)\sigma(e_3\times p)\rangle=\det(\sigma)\rho(p)$. The four fields are linearly
independent. The $\langle\n,e_i\rangle$ are: the Gauss map extends to a map of degree $3$ from the compact torus $\overline T$ onto $S^2$
(on $M$ itself it omits $-e_3$, which it takes only at the planar end $z=0$), and $\langle\n,a\rangle\equiv0$ on $M$ extends to
$\overline T$ by continuity, which forces $a=0$. If
$\rho=\langle\n,a\rangle$, applying $\sigma_1$ and $\sigma_2$ (for which $\chi=1$, $\det=-1$) gives $\langle\n,\sigma_ia+a\rangle=0$, so
$\sigma_1a=\sigma_2a=-a$ and $a=0$; then $\rho\equiv0$, so $e_3\times\x$ is tangent to $M$ and $M$ is rotationally symmetric, which it is
not. By Theorem~\ref{thm:nayatani} the four fields span the bounded Jacobi fields.

Let $u=\langle\n,a\rangle+c\rho$ be $G$-equivariant. For $\sigma=\sigma_2$ equivariance gives $\langle\n,\sigma_2a-a\rangle-2c\rho=0$, so $c=0$ and $\sigma_2a=a$ by independence. Then, for
every $\sigma$, $\langle\n,\sigma^{-1}a-a\rangle=0$, so $\sigma^{-1}a=a$ for all $\sigma\in G$. The only vector fixed by $S$ is $0$.
\end{proof}

Section~\ref{sec:num_spectrum} confirms Theorem~\ref{thm:nayatani} and Lemma~\ref{lem:no_kernel} numerically (Figure~\ref{fig:spectrum}).

\section{Perturbations with boundary on the sphere}\label{sec:perturb}

By Lemma~\ref{lem:saddle} and since $\x$ is an embedding with $\x^{-1}(0)=\{\omega\}$, there is $\rho_0\in(0,\frac1{10})$ such that
$\x^{-1}(B_{4\rho_0})$ is a disk $U\ni\omega$ on which $|\x|$ has no critical point except $\omega$, and $M\cap B_{4\rho_0}$ is the graph
of Lemma~\ref{lem:saddle} with $|\nabla x_3|\leq\frac1{10}$. For $0<\varepsilon<\rho_0$ put
\[
D_\varepsilon=\{z\in U:\ |\x(z)|<\varepsilon\},\qquad \Omega_\varepsilon=\overline T\setminus D_\varepsilon ,
\]
where $\overline T=T$ is the compact torus, the three ends being ordinary points of it. $D_\varepsilon$ is a $G$-invariant
disk and $\x(\partial D_\varepsilon)=M\cap\partial B_\varepsilon$.

\emph{The vector field.} Choose $R_\infty$ so large that outside $B_{R_\infty}$ the three ends of $M$ are graphs over the exterior
of a disk in $\{x_3=0\}$, $x_3=f_j(x')$, with $|n_3|\geq\frac34$~\cite{Schoen1983}, and let
$S_j=\{x:\ |x'|>R_\infty,\ |x_3-f_j(x')|<\delta\}$ be vertical $\delta$-neighbourhoods of them; for small $\delta$ they are disjoint, since the
ends are disjoint graphs with $|f_i-f_j|\to\infty$, and $G$ permutes them, since it preserves vertical distances. Let $\varsigma=+1$ on
the slabs of the two catenoidal ends and $\varsigma=-1$ on that of the planar end; then $\n\to\varsigma\,e_3$ at each end, since $g=\infty$ at
$z=\frac12,\frac i2$ and $g=0$ at $z=0$, and $\operatorname{sign}n_3=\varsigma$ on $M\cap S_j$ because $|n_3|\geq\frac34$ there and each end is connected. Let $\eta,\eta_\infty$ be cut-off functions of $|x|$ with $\eta=1$ for $|x|\leq2\rho_0$, $\eta=0$
for $|x|\geq3\rho_0$, $\eta_\infty=0$ for $|x|\leq R_\infty$ and $\eta_\infty=1$ for $|x|\geq2R_\infty$, let $\widetilde{\n}$ be the $G$-equivariant
extension of $\n$ to a tubular neighbourhood of $M\cap(B_{3R_\infty}\setminus B_{\rho_0})$ by nearest-point projection, and put
\[
W(x)=\eta\Bigl(e_3-\frac{x_3}{|x|^2}\,x\Bigr)+\bigl(1-\eta-\eta_\infty\bigr)\widetilde{\n}(x)+\eta_\infty\,\varsigma\,e_3 ,
\]
defined on a neighbourhood of $M\setminus\{0\}$ (the supports of $\eta$ and $\eta_\infty$ are disjoint). For $0<|x|\leq2\rho_0$ the field $W$ is
the gradient of $x_3$ along the sphere $\partial B_{|x|}$; each such sphere is invariant under its flow $\Phi^t$, which moves points along
meridians, and flow lines starting outside $\overline B_{2\rho_0}$ never enter it. On $S_j\cap\{|x|\geq2R_\infty\}$ the flow is vertical translation.
$W$ satisfies $W(\sigma x)=\chi(\sigma)\sigma W(x)$ for $\sigma\in G$, since $\tau$ exchanges the catenoidal slabs and $\sigma e_3=\chi(\sigma)e_3$. The
function $\theta:=\langle W,\n\rangle$ satisfies $\theta\geq\frac12$ on $M$: near the origin $\theta=1+O(|x|^2)$ by Lemma~\ref{lem:saddle}
(after decreasing $\rho_0$), $\theta=1$ where $\eta=\eta_\infty=0$, and $\theta\geq\frac34$ on the ends. Throughout, $H$ denotes the trace of the second
fundamental form.

\emph{The surfaces.} For a function $u$ on $\Omega_\varepsilon$ with $|u|$ small (in the sense of Section~\ref{sec:nonlinear}), let
\[
\Sigma_\varepsilon(u)=\Bigl\{\Phi^{u(p)/\theta(p)}\bigl(\x(p)\bigr):\ p\in\Omega_\varepsilon\setminus\{\text{ends}\}\Bigr\}.
\]
Its first-order normal displacement is $u$. Its boundary $\Phi^{u/\theta}(\x(\partial D_\varepsilon))$ lies on $\partial B_\varepsilon$ exactly,
because $|\x|=\varepsilon<2\rho_0$ there and the flow preserves $|x|$. If $u$ is $G$-equivariant, $\Sigma_\varepsilon(u)$ is $G$-invariant, because
$\sigma\Phi^t=\Phi^{\chi(\sigma)t}\sigma$ and $\theta\circ\kappa_\sigma=\theta$. On the ends, for $|\x(p)|\geq2R_\infty+1$ (where the flow lines for $|t|\leq1$ stay in the vertical region), $\Sigma_\varepsilon(u)$ is the graph
of $f_j+\varsigma\,u/\theta$ over the same horizontal point, with $\theta=|n_3|=(1+|\nabla f_j|^2)^{-1/2}$.
We look for $u$ such that $\Sigma_\varepsilon(u)$ is minimal and meets $\partial B_\varepsilon$ orthogonally, that is,
\[
H\bigl(\Sigma_\varepsilon(u)\bigr)=0\ \text{ in }\Omega_\varepsilon,\qquad
\Theta_\varepsilon(u):=\Bigl\langle\n_u,\frac{x}{|x|}\Bigr\rangle=0\ \text{ on }\partial\Omega_\varepsilon ,
\]
where $\n_u$ is the unit normal of $\Sigma_\varepsilon(u)$ at the image of $p$.

\begin{lemma}[The initial error]\label{lem:error}
$H(\Sigma_\varepsilon(0))=0$, and on $\partial\Omega_\varepsilon$, in the polar angle $\varphi$ of the horizontal projection,
$\Theta_\varepsilon(0)=-\frac\kappa2\,\varepsilon\cos2\varphi+O(\varepsilon^3)$, together with its tangential derivatives of order $\le2$
measured at the scale $\varepsilon$.
\end{lemma}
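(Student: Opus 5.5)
First, $\Sigma_\varepsilon(0)=\x(\Omega_\varepsilon\setminus\{\text{ends}\})$ is an open piece of the minimal surface $M$, since $\Phi^0$ is the identity. Hence $H(\Sigma_\varepsilon(0))=0$ holds trivially, and $\n_0=\n$. It remains to compute $\Theta_\varepsilon(0)=\langle\n,x/|x|\rangle$ on $M\cap\partial B_\varepsilon$.

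I would work near the origin with the graph description of Lemma~\ref{lem:saddle}: $x_3=h(x')$ with $h=\frac\kappa2(x_1^2-x_2^2)+O(|x'|^4)$. Since $\n(\omega)=e_3$, the normal is
\[
\n=\frac{(-\nabla h,1)}{\sqrt{1+|\nabla h|^2}} .
\]
At a point $x=(x',h(x'))$ this gives
\[
\langle\n,x\rangle=\frac{h-x'\cdot\nabla h}{\sqrt{1+|\nabla h|^2}} .
\]
For the quadratic part $q=\frac\kappa2(x_1^2-x_2^2)$, Euler's relation gives $x'\cdot\nabla q=2q$, so $h-x'\cdot\nabla h=-q+O(|x'|^4)$. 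The remainder is $O(|x'|^4)$ because the $O(|x'|^4)$ term is a smooth function (by analyticity of $\x$ and the local parity/symmetry) whose radial derivative is also $O(|x'|^4)$. Moreover $\sqrt{1+|\nabla h|^2}=1+O(|x'|^2)$, so
\[
\langle\n,x\rangle=-\tfrac\kappa2 r^2\cos2\varphi+O(r^4),\qquad x'=r(\cos\varphi,\sin\varphi).
\]
On $\partial B_\varepsilon$ we have $|x|=\varepsilon$ and $r^2=\varepsilon^2-h^2=\varepsilon^2+O(\varepsilon^4)$. Dividing by $\varepsilon$ gives $\Theta_\varepsilon(0)=-\frac\kappa2\varepsilon\cos2\varphi+O(\varepsilon^3)$.

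The point to be careful about is the remainder. One must justify that the error in the graph function is really $O(|x'|^4)$ with derivatives: $\partial^\alpha$ of the remainder should be $O(|x'|^{4-|\alpha|})$. Lemma~\ref{lem:saddle} states the expansion of $\x$ only to $O(|w|^3)$, while the graph statement has $O(|x'|^4)$. I would justify it via the $G$-symmetry (Lemma~\ref{lem:modes}): $x_3$ is $G$-equivariant with $\chi$, so its Taylor expansion at $0$ contains only harmonics with $n\equiv2\pmod4$. Hence there is no cubic term, and the remainder is a smooth (real-analytic) function vanishing to fourth order, so its derivative bounds follow from Taylor's theorem.

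For the derivative claim, I parametrize $\partial\Omega_\varepsilon$ by $\varphi$ via $x'=r_\varepsilon(\varphi)(\cos\varphi,\sin\varphi)$, where $r_\varepsilon$ is defined by $r^2+h^2=\varepsilon^2$. The implicit function theorem gives $r_\varepsilon=\varepsilon+O(\varepsilon^3)$ with $\varphi$-derivatives of the same order. Tangential derivatives at scale $\varepsilon$ correspond to $\partial_\varphi$, and each term above is a smooth function of $(\varepsilon,\varphi)$ after rescaling $x'=\varepsilon y$: the expression equals $\varepsilon F(\varepsilon,\varphi)$ with $F$ smooth and $F(0,\varphi)=-\frac\kappa2\cos2\varphi$. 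Since $F$ is even in $\varepsilon$ (by the scaling parity of $h$), the $\varphi$-derivatives of order $\le2$ of the error are $O(\varepsilon^3)$.
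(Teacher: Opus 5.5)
Your proposal is correct and follows the paper's proof: the same graph computation $\langle\n,x\rangle=(h-x'\cdot\nabla h)/\sqrt{1+|\nabla h|^2}=-\frac\kappa2(x_1^2-x_2^2)+O(|x'|^4)$ via Euler's relation, combined with $|x'|=\varepsilon+O(\varepsilon^3)$ on $\partial B_\varepsilon$. Your rescaling $\Theta_\varepsilon(0)=\varepsilon F(\varepsilon,\varphi)$ makes explicit the paper's one-line remark that ``the expansion can be differentiated.'' The ``scaling parity'' you invoke for $F$ being even in $\varepsilon$ is precisely $h(-x')=h(x')$, which holds because $\sigma_1\sigma_2\in G$ and $\chi(\sigma_1\sigma_2)=1$.
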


\begin{proof}
$\Sigma_\varepsilon(0)=M\setminus B_\varepsilon$ is minimal. With $x_3=f(x')=\frac\kappa2(x_1^2-x_2^2)+O(|x'|^4)$ and
$\n=(-\nabla f,1)/\sqrt{1+|\nabla f|^2}$, one has
$\langle\n,x\rangle=(f-x'\cdot\nabla f)/\sqrt{1+|\nabla f|^2}=-\frac\kappa2(x_1^2-x_2^2)+O(|x'|^4)$, and $|x'|=\varepsilon+O(\varepsilon^3)$ on
$\partial B_\varepsilon$. The derivative bounds follow from the same expansion, which can be differentiated.
\end{proof}

Figure~\ref{fig:defect} and Table~\ref{tab:defect} show $\Theta_\varepsilon(0)$ computed from the Weierstrass data (Section~\ref{sec:num_defect}).

\begin{lemma}[Linearization]\label{lem:linearization}
$\frac{d}{dt}\big|_{t=0}H(\Sigma_\varepsilon(tu))=Lu$, where $L=\Delta_M+|A|^2$, and
$\frac{d}{dt}\big|_{t=0}\Theta_\varepsilon(tu)=\sqrt{1-\Theta_\varepsilon(0)^2}\,\partial_\nu u+\frac u\varepsilon+R_\varepsilon u$, where $\nu$ is the outward
unit conormal of $\Omega_\varepsilon$ along $\partial D_\varepsilon$ (it points towards the origin) and $|R_\varepsilon u|\leq C\varepsilon|u|$.
\end{lemma}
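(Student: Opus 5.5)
The interior statement is the standard first variation of mean curvature, and the boundary statement follows from differentiating the unit normal of the deformed surface. Write the variation field as $V=\frac{d}{dt}\big|_{t=0}\Phi^{tu/\theta}(\x(p))=\frac{u}{\theta}W(\x(p))$. By construction its normal part is $\langle V,\n\rangle=u$, so $V=u\,\n+V^T$ with $V^T$ tangent to $M$. A tangential variation is a reparametrization. It changes $H$ at the image of $p$ only by $\langle\nabla H,V^T\rangle$, which vanishes because $H\equiv0$ on $M$. The normal part contributes $\Delta_Mu+|A|^2u$ by the classical formula (with the paper's sign convention $H=\operatorname{tr}A$). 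This gives $\frac{d}{dt}H(\Sigma_\varepsilon(tu))=Lu$ in $\Omega_\varepsilon$, including near the ends, since the computation is pointwise.

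For the boundary term I differentiate $\Theta=\langle\n_t,x_t/|x_t|\rangle$ at a point $p\in\partial D_\varepsilon$. Two contributions arise.

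\emph{Variation of the normal.} For a variation with normal part $u$ and tangential part $V^T$, the general formula is
\[
\dot\n=-\nabla u+A(V^T),
\]
where $A$ is the shape operator, so $\dot\n$ is tangent. Write $x/|x|=\cos\beta\,\n+\sin\beta\,\mu$ on $\partial B_\varepsilon\cap M$. Here $\mu$ is the unit tangent vector of $M$ orthogonal to $\partial D_\varepsilon$ pointing away from the origin, so $\mu=-\nu$, and $\cos\beta=\Theta_\varepsilon(0)$, $\sin\beta=\sqrt{1-\Theta_\varepsilon(0)^2}$. Then
\[
\langle\dot\n,x/|x|\rangle=\sqrt{1-\Theta_\varepsilon(0)^2}\,\bigl(\partial_\nu u+\langle A(V^T),\mu\rangle\bigr).
\]
Since $V=\frac u\theta W$ and, near the origin, $W=e_3-x_3x/|x|^2$ with $\theta=1+O(|x|^2)$, we get $V^T=O(|x|)\,u$: the tangential part of $e_3$ is $O(|\nabla x_3|)=O(\varepsilon)$, and $x_3/|x|=O(\varepsilon)$. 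With $|A|=O(1)$ this term is bounded by $C\varepsilon|u|$ and goes into $R_\varepsilon$.

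\emph{Variation of the position.} The derivative of $x/|x|$ is $\frac1\varepsilon(V-\langle V,x/|x|\rangle x/|x|)=V/\varepsilon$, because $V\perp x$ (the flow preserves $|x|$). Pairing with $\n$ gives $\langle V,\n\rangle/\varepsilon=u/\varepsilon$.

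Summing the two contributions gives the claimed formula. The step needing care is the first one. One must check the sign conventions ($\nu$ points toward the origin, $\mu=-\nu$) and that all error terms are $O(\varepsilon|u|)$ without derivatives of $u$. The point here is that the tangential part of $V$ enters only through $A(V^T)$, which carries no derivative of $u$. If the shape-operator term produced derivative terms, I would instead compute $\n_t$ directly from the graph representation $x_3=f(x')$ near the origin, using Lemma~\ref{lem:saddle}.
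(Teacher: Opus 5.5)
Your proposal is correct and is essentially the paper's proof. It uses the same variation field $V=(u/\theta)W$, the same first variation of the normal $-\nabla u$ plus a shape-operator term in $V^T$ (your sign $+A(V^T)$ versus the paper's $-A(V^T)$ is only a convention for $A$), and the same splitting of $x/|x|$ into its normal part and a conormal part along $x^T/|x^T|=-\nu$. The bound $W^T=O(\varepsilon)$ on $\partial B_\varepsilon$ then gives $|R_\varepsilon u|\leq C\varepsilon|u|$, as in the paper.
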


\begin{proof}
The variation field $V=(u/\theta)W$ has normal component $u$, and a tangential component, which does not change the
mean curvature to first order because $H(\Sigma_\varepsilon(0))=0$; this gives $Lu$. On the boundary $V$ is tangent to the
sphere, so $\delta|x|=0$ and $\delta(x/|x|)=V/\varepsilon$, while $\delta\n=-\nabla u-A(V^T)$, where $V^T$ is the tangential part
of $V$. Write $x=\varepsilon\Theta_\varepsilon(0)\n+x^T$ with $|x^T|=\varepsilon\sqrt{1-\Theta_\varepsilon(0)^2}$ and $x^T/|x^T|=-\nu$. Then
$\delta\Theta=-\langle\nabla u,x^T\rangle/\varepsilon-\langle A(V^T),x^T\rangle/\varepsilon+\langle\n,V\rangle/\varepsilon$, which is the claim with
$R_\varepsilon u=-\langle A(V^T),x^T\rangle/\varepsilon$. Since $W^T=W-\theta\n=O(\varepsilon)$ on $\partial B_\varepsilon$ by Lemma~\ref{lem:saddle} and $|A|$ is
bounded near the origin, $|R_\varepsilon u|\leq C\varepsilon|u|$.
\end{proof}

\section{Linear theory}\label{sec:linear}

\subsection{The compactified Jacobi operator}
Let $\tilde g=|dz|^2$ be the flat metric of the torus $\overline T$. It is $G$-invariant, because all $\kappa_\sigma$ are isometries of the
square torus, and it equals $|dw|^2$ on $U$. Write the metric of $M$ as
$g_M=e^{2\phi}\tilde g$. Then $L=e^{-2\phi}J$ with
\[
J=\Delta_{\tilde g}+\tilde V,\qquad \tilde V=e^{2\phi}|A|^2 .
\]
In a local conformal coordinate $\zeta$ with $\tilde g=\mu^2|d\zeta|^2$ one has $\tilde V=8|g_\zeta|^2/\bigl(\mu^2(1+|g|^2)^2\bigr)$,
which is smooth on all of $\overline T$ because $g$ is meromorphic, also at the ends~\cite{FischerColbrie1985}. $J$
commutes with the action $u\mapsto\chi(\sigma)\,u\circ\kappa_\sigma^{-1}$.

\begin{proposition}\label{prop:J_iso}
$J\colon C^{2,\alpha}(\overline T)^G\to C^{0,\alpha}(\overline T)^G$ is an isomorphism.
\end{proposition}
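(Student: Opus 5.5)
The plan is to use the Fredholm alternative for a self-adjoint elliptic operator on a compact manifold, restricted to the $G$-equivariant subspace. $J=\Delta_{\tilde g}+\tilde V$ is a smooth Schr\"odinger operator on the closed flat torus $\overline T$, since $\tilde V$ is smooth there. Hence $J\colon C^{2,\alpha}(\overline T)\to C^{0,\alpha}(\overline T)$ is Fredholm of index zero and formally self-adjoint with respect to $L^2(\overline T,\tilde g)$, by Schauder theory.

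The action $u\mapsto\chi(\sigma)\,u\circ\kappa_\sigma^{-1}$ is a representation of $G$ by isometries of $L^2(\tilde g)$, because each $\kappa_\sigma$ is an isometry of the square torus. It preserves $C^{k,\alpha}$ and commutes with $J$. The averaging projection $P=\frac18\sum_{\sigma}\chi(\sigma)(\cdot)\circ\kappa_\sigma^{-1}$ is therefore an $L^2$-orthogonal projection onto the $G$-equivariant functions, it commutes with $J$, and it is bounded on every Hölder space. This gives three consequences:
\begin{itemize}[label=--]
\item $J$ maps $C^{2,\alpha}(\overline T)^G$ into $C^{0,\alpha}(\overline T)^G$.
\item The kernel of the restricted operator is $\ker J\cap C^{2,\alpha}(\overline T)^G$.
\item For surjectivity, take $f\in C^{0,\alpha}(\overline T)^G$. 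It is $L^2$-orthogonal to $\ker J$ exactly when it is orthogonal to $P\ker J=(\ker J)^G$, since $\langle f,k\rangle=\langle Pf,k\rangle=\langle f,Pk\rangle$. If $(\ker J)^G=0$, then $f\perp\ker J$, so $Ju=f$ has a solution $u\in C^{2,\alpha}$. Then $Pu$ is an equivariant solution.
\end{itemize}
Boundedness of the inverse then follows from the open mapping theorem, or directly from the Schauder estimate combined with injectivity via a standard compactness argument.

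So everything reduces to $(\ker J)^G=0$. Let $u\in C^{2,\alpha}(\overline T)$ satisfy $Ju=0$ and be $G$-equivariant. On $T\setminus\{0,\frac12,\frac i2\}$ we have $Lu=e^{-2\phi}Ju=0$, so $u|_M$ is a Jacobi field of $M$. It is bounded because it is continuous on the compact torus. Lemma~\ref{lem:no_kernel} then gives $u|_M=0$. Since the ends are three points, $u\equiv0$ on $\overline T$ by continuity.

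The only delicate point is the identification underlying this step: one must check that bounded Jacobi fields of $M$ correspond to elements of $\ker J$, and in particular that every smooth element of $\ker J$ restricts to a bounded Jacobi field. That direction is the trivial one used above. The converse, which is Fischer-Colbrie's removable singularity statement, is only needed if one wants equality of the two spaces, and Theorem~\ref{thm:nayatani} was already applied in that form. I therefore expect no real obstacle beyond verifying carefully that the equivariance notion on $\overline T$, via $\kappa_\sigma$ and $\chi$, is the same one used in Lemma~\ref{lem:no_kernel}. That holds by Lemma~\ref{lem:symmetry}.
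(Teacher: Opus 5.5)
Your proposal is correct and follows the paper's proof essentially step for step. Both use the Fredholm alternative for the formally self-adjoint operator $J$, the averaging projection $P$ commuting with $J$ to reduce the range condition for equivariant $f$ to orthogonality to $(\ker J)^G$ (with $Pu$ as an equivariant preimage), and $(\ker J)^G=0$ because elements of $\ker J$ are continuous on $\overline T$, restrict to bounded Jacobi fields of $M$, and are then excluded by Lemma~\ref{lem:no_kernel}.
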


\begin{proof}
$J$ is a formally self-adjoint elliptic operator on $L^2(\overline T,dA_{\tilde g})$, and the action
$u\mapsto\chi(\sigma)\,u\circ\kappa_\sigma^{-1}$ is unitary and commutes with $J$, so it commutes with the orthogonal projection $P$ onto the
$G$-equivariant functions. By Fredholm theory $f\in C^{0,\alpha}(\overline T)$ is in the range of $J$ if and only if $f\perp\ker J$; if $f$ is
$G$-equivariant, this holds if and only if $f\perp(\ker J)^G$, and then $Pu$ is a $G$-equivariant preimage of $f$ whenever $u$ is a
preimage. So on the $G$-equivariant functions the kernel is $(\ker J)^G$ and the range is the orthogonal complement of $(\ker J)^G$. An
element of $\ker J$ is continuous on $\overline T$, so it restricts to a bounded Jacobi field of $M$; by Lemma~\ref{lem:no_kernel},
$(\ker J)^G=0$.
\end{proof}

\subsection{Weighted norms on the domain with a hole}
Let $U_2=\x^{-1}(B_{2\rho_0})\subset U$. Let $\vartheta$ be a smooth $G$-invariant function on $\overline T\setminus\{\omega\}$ with
$\vartheta=|\x|$ on $U_2$, $2\rho_0\leq\vartheta\leq3\rho_0$ on $U\setminus U_2$ and $\vartheta=3\rho_0$ outside $U$; for instance
$\vartheta=\psi(|\x|)$ on $U$, with $\psi$ smooth and nondecreasing, $\psi(t)=t$ for $t\leq2\rho_0$ and $\psi(t)=3\rho_0$ for
$t\geq\frac72\rho_0$. On $\Omega_\varepsilon$ we use the scale-invariant norms
\[
\begin{aligned}
\|u\|_{X}&=\sup_{\Omega_\varepsilon}\Bigl(\vartheta^{-1}|u|+|\nabla u|+\vartheta|\nabla^2u|\Bigr)+\sup_{p}\vartheta(p)^{1+\alpha}[\nabla^2u]_{\alpha,B_p},\\
\|f\|_{Y}&=\sup_{\Omega_\varepsilon}\vartheta|f|+\sup_p\vartheta(p)^{1+\alpha}[f]_{\alpha,B_p},
\end{aligned}
\]
where $B_p=B(p,\vartheta(p)/(8\pi))\cap\Omega_\varepsilon$ and all quantities are computed with the flat metric $\tilde g$. Since
$\vartheta=|\x|=\pi|w|(1+O(|w|^2))$ on $U_2$, for $p\in U_2$ the ball $B_p$ has $\tilde g$-radius about $|w(p)|/8$ and stays in
$\{\vartheta\geq\frac12\vartheta(p)\}$. Outside $U_2$ the weight lies between $2\rho_0$ and $3\rho_0$, so there the weighted norms are
equivalent to the usual Hölder norms with constants independent of $\varepsilon$. On
$\partial\Omega_\varepsilon$, with $s$ the $\tilde g$-arclength,
\[
\|h\|_{Z}=\sup|h|+\varepsilon\sup|\partial_sh|+\varepsilon^{1+\alpha}[\partial_sh]_\alpha .
\]
The weight $\vartheta^{1}$ in $X$ is the natural one: it measures slopes. Note that $\|u\|_X<\infty$ allows $u$ to be of
size $\vartheta$ near the hole, and on $\Omega_\varepsilon$ these norms are equivalent to the usual Hölder norms, with constants depending
on $\varepsilon$. We write the boundary operator of Lemma~\ref{lem:linearization} as
\[
\mathcal B_\varepsilon u=\partial^M_\nu u+\frac u\varepsilon ,
\]
with $\partial^M_\nu=e^{-\phi}\partial^{\tilde g}_\nu$ the conormal derivative for $g_M$. For $u\in X$ one has $\|\mathcal B_\varepsilon u\|_Z\leq C\|u\|_X$.

\subsection{Three limiting problems}
In the following lemma $G$ acts on $\R^2$ by $y\mapsto\bar y,-\bar y,i\bar y$ with $\chi=1,1,-1$, as it does on the
coordinate $w$, and $G$-equivariant harmonic functions have only the Fourier modes $n\equiv2\pmod4$ by the
proof of Lemma~\ref{lem:modes}.

\begin{lemma}\label{lem:limits}
\begin{enumerate}[label=(\alph*)]
\item If $u$ is a $G$-equivariant solution of $Ju=0$ on $\overline T\setminus\{\omega\}$ with $|u|\leq C\vartheta$, then $u=0$.
\item If $u$ is $G$-equivariant and harmonic on $\{|y|\geq1\}\subset\R^2$, satisfies $-\partial_ru+u=0$ on $|y|=1$ and
$|u(y)|\leq C|y|$, then $u=0$.
\item If $u$ is $G$-equivariant and harmonic on $\R^2\setminus\{0\}$ and $|u(y)|\leq C|y|$, then $u=0$.
\end{enumerate}
\end{lemma}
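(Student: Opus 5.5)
We treat the three statements separately. In each case the key input is the Fourier restriction of Lemma~\ref{lem:modes}: $G$-equivariance allows only the modes $n\equiv2\pmod4$, so in particular $n\ge2$ and the modes $n=0,1$ (logarithm, constants, tilts) never occur.

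\emph{Part (c).} Expand $u=\sum_{n\equiv2\,(4)}(a_nr^n+b_nr^{-n})\cos n\varphi$ on $\R^2\setminus\{0\}$. This is legitimate because the $n=0$ mode, which is the only one that could contain $\log r$, is excluded. The bound $|u|\le C|y|$ gives $|u_n(r)|\le Cr$ for every $n$. As $r\to\infty$ this forces $a_n=0$, since $n\ge2>1$. As $r\to0$ it forces $b_n=0$. Hence $u=0$.

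\emph{Part (b).} On $\{|y|\ge1\}$ write $u=\sum(a_nr^n+b_nr^{-n})\cos n\varphi$ with $n\equiv2\pmod4$. Growth at infinity again gives $a_n=0$, so $u_n=b_nr^{-n}$. At $r=1$ the boundary condition reads
\[
-\partial_ru_n+u_n=(n+1)b_n=0,
\]
so $b_n=0$. The sign is the favourable one: a decaying harmonic mode has $-\partial_r u_n=nu_n$, which has the same sign as $u_n$, so it cannot cancel against $u_n$.

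\emph{Part (a).} This is the main step. Near $\omega$ we work in the coordinate $w$. The potential $\tilde V$ is smooth and $J=\Delta_{|dw|^2}+\tilde V$. The plan is to show that the singularity at $\omega$ is removable; then $u$ is a $C^{2,\alpha}$ $G$-equivariant element of $\ker J$ on $\overline T$, and Proposition~\ref{prop:J_iso} gives $u=0$.

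Since $|u|\le C\vartheta\le C'|w|$, $u$ is bounded near $\omega$, so $u\in L^\infty$ and $\Delta u=-\tilde Vu\in L^\infty$ on the punctured disk. A bounded function on a punctured planar disk whose Laplacian is bounded solves the equation across the puncture: an isolated point has zero capacity. To see this concretely, let $v$ solve $\Delta v=-\tilde Vu$ on the full disk, with $v=u$ on its boundary, via Newton potential plus harmonic correction. Then $u-v$ is harmonic on the punctured disk and bounded, hence extends harmonically. So $u$ is a weak, and then by elliptic regularity a smooth, solution of $Ju=0$ on all of $\overline T$.

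If one prefers to avoid the capacity argument, we can instead expand $u$ in Fourier modes $u_n(r)$ in $w$. Each mode satisfies an ODE that is a perturbation of $u_n''+u_n'/r-n^2u_n/r^2=O(|u|)$. The singular solutions behave like $r^{-n}$ or, for $n=0$, like $\log r$. Both are excluded: by the bound $|u|\le Cr$, and by the fact that only $n\equiv2\pmod4$ occurs (Lemma~\ref{lem:modes}).

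The only delicate point is making the removable-singularity step rigorous for an operator with a potential. The boundedness of $u$ together with the smoothness of $\tilde V$ makes this routine, so we do not expect a real obstacle there.
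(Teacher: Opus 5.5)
Your proposal is correct and follows the paper's proof. For (a) you use the removable singularity at $\omega$ followed by Proposition~\ref{prop:J_iso}. For (b) and (c) you use the decomposition into modes $u_n=a_nr^n+b_nr^{-n}$ with $n\equiv2\pmod4$; the growth bound kills $a_n$, and then the bound as $r\to0$ in (c) or the Robin condition $(n+1)b_n=0$ in (b) kills $b_n$. Your Newton-potential argument spells out the removability step that the paper states in one line.
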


\begin{proof}
(a) $u$ is bounded near $\omega$, so the singularity is removable, and $u\in\ker J^G=0$ by Proposition~\ref{prop:J_iso}.
(b), (c) By the proof of Lemma~\ref{lem:modes}, all sine coefficients of $u(r,\cdot)$ and all cosine coefficients with
$n\not\equiv2\pmod4$ vanish. For $n\equiv2\pmod4$ the coefficient $u_n(r)=\frac1\pi\int_0^{2\pi}u(r,\varphi)\cos n\varphi\,d\varphi$ is $C^2$ in $r$
(and $C^1$ up to $r=1$ in (b)) and satisfies $u_n''+u_n'/r-n^2u_n/r^2=0$, so $u_n=a_nr^n+b_nr^{-n}$. The bound $|u_n(r)|\leq2Cr$ forces $a_n=0$ as $r\to\infty$,
because $n\geq2>1$. In (c) it also forces $b_n=0$ as $r\to0$. In (b) the Robin condition gives
$nb_n+b_n=0$, so $b_n=0$.
\end{proof}

\subsection{Uniform invertibility}

\begin{proposition}\label{prop:uniform}
There are $\varepsilon_1>0$ and $C$ such that for $\varepsilon\in(0,\varepsilon_1)$ and every $G$-equivariant $u\in C^{2,\alpha}(\Omega_\varepsilon)$,
\[
\|u\|_X\leq C\bigl(\|Ju\|_Y+\|\mathcal B_\varepsilon u\|_Z\bigr).
\]
Consequently $(J,\mathcal B_\varepsilon)\colon X^G\to Y^G\times Z^G$ is an isomorphism whose inverse has norm at most $C$.
\end{proposition}

\begin{proof}
\emph{Step 1: reduction to the weighted supremum.} Interior Schauder estimates on the balls $B(p,\vartheta(p)/(4\pi))$ with
$\vartheta(p)\geq2\varepsilon$ (they do not meet $D_\varepsilon$), rescaled to unit size, and the Schauder estimates for the oblique derivative
problem~\cite[Section~6.7]{GilbargTrudinger} on $\{\varepsilon\leq\vartheta\leq4\varepsilon\}$, rescaled by $\varepsilon$, give
$\|u\|_X\leq C_0\bigl(\sup\vartheta^{-1}|u|+\|Ju\|_Y+\|\mathcal B_\varepsilon u\|_Z\bigr)$ with $C_0$ independent of $\varepsilon$. The rescaled
problems have uniformly bounded coefficients: after rescaling by $\vartheta(p)$ near the hole, $\tilde g$ is flat, the
rescaled potential is $\vartheta^2\tilde V=O(\vartheta^2)$, and in the variable $y=\pi w/\varepsilon$ the boundary $\{|\x|=\varepsilon\}$ is the curve $|y|^2=1+O(\varepsilon^2)$, which converges
to the unit circle in $C^\infty$ by Lemma~\ref{lem:saddle}. With $u=\varepsilon\tilde u(y)$ one has
$\mathcal B_\varepsilon u=\pi e^{-\phi}\partial^y_\nu\tilde u+\tilde u$, and $\pi e^{-\phi}=1+O(\varepsilon^2)$ because $e^\phi=|\partial\x/\partial(\Ree w)|=\pi+O(|w|^2)$ on $U$.

\emph{Step 2: contradiction.} If the estimate fails, there are $\varepsilon_i\to0$ and $G$-equivariant $u_i$ with
$\|u_i\|_X=1$ and $\|Ju_i\|_Y+\|\mathcal B_{\varepsilon_i}u_i\|_Z\to0$. By Step~1, $\sup\vartheta^{-1}|u_i|\geq c>0$; choose $p_i$ with
$\vartheta(p_i)^{-1}|u_i(p_i)|\geq c/2$. Pass to a subsequence.

(a) If $\vartheta(p_i)\geq c'>0$, the bound $\|u_i\|_X\le1$ gives $C^{2,\alpha}$ bounds on compact subsets of $\overline T\setminus\{\omega\}$, so
$u_i\to u$ in $C^2_{\rm loc}$ with $Ju=0$, $|u|\leq\vartheta$, $u$ $G$-equivariant and $|u(p_\infty)|\geq cc'/2$. This contradicts
Lemma~\ref{lem:limits}(a).

(b) If $\vartheta(p_i)/\varepsilon_i\leq C'$, put $y=\pi w/\varepsilon_i$ and $\tilde u_i(y)=\varepsilon_i^{-1}u_i$. The rescaled domains converge to
$\{|y|\geq1\}$, the rescaled operators to $\Delta$ and $-\partial_r+1$, with right-hand sides tending to zero, and
$|\tilde u_i(y)|\leq|y|(1+o(1))$. A subsequence converges in $C^2_{\rm loc}$ up to the boundary to a solution of the problem of
Lemma~\ref{lem:limits}(b) which does not vanish at the limit of $\pi w(p_i)/\varepsilon_i$.

(c) If $\vartheta(p_i)\to0$ and $\vartheta(p_i)/\varepsilon_i\to\infty$, put $y=\pi w/\vartheta(p_i)$ and $\tilde u_i=\vartheta(p_i)^{-1}u_i$. The
holes shrink to the origin and the limit is a solution of the problem of Lemma~\ref{lem:limits}(c), nonzero at a point
with $|y|=1$.

All three cases are impossible, which proves the estimate.

\emph{Step 3: invertibility.} For fixed $\varepsilon$, $\Omega_\varepsilon$ is a compact surface with smooth boundary and $(J,\mathcal B_\varepsilon)$ is a
Robin problem with smooth coefficients. It is symmetric: by Green's formula for $\tilde g$ and
$\partial^{\tilde g}_\nu u=-e^{\phi}u/\varepsilon$ on the boundary,
$\int(Ju)v-u(Jv)\,dA_{\tilde g}=\int_{\partial\Omega_\varepsilon}\bigl(\partial^{\tilde g}_\nu u\,v-u\,\partial^{\tilde g}_\nu v\bigr)ds=0$ when $\mathcal B_\varepsilon u=\mathcal B_\varepsilon v=0$.
The associated form $\int_{\Omega_\varepsilon}\bigl(|\nabla u|^2-\tilde Vu^2\bigr)dA_{\tilde g}+\int_{\partial\Omega_\varepsilon}\varepsilon^{-1}e^{\phi}u^2\,ds$ is
bounded below on $H^1$, its boundary term being positive, so $-J+\lambda$ with the Robin condition is invertible for large
$\lambda$ by Lax--Milgram and elliptic regularity for the oblique derivative problem (\cite[Section~6.7]{GilbargTrudinger}, stated there for
domains in $\R^n$ and extended to $\Omega_\varepsilon$ by a partition of unity), and $(J,\mathcal B_\varepsilon)$ is a compact perturbation of an isomorphism. Hence it
is Fredholm of index zero between Hölder spaces, also on the $\chi$-isotypic subspace, which it preserves, and by the
estimate it is injective for $\varepsilon<\varepsilon_1$. It is therefore an isomorphism, and the estimate bounds the inverse.
\end{proof}

\section{The nonlinear problem}\label{sec:nonlinear}

Set
\[
\mathcal N_\varepsilon(u)=\Bigl(e^{2\phi}H\bigl(\Sigma_\varepsilon(u)\bigr),\ \Theta_\varepsilon(u)\Bigr),
\]
defined for $G$-equivariant $u\in X$ with $\|u\|_X\leq c_0$. By Lemmas~\ref{lem:error} and~\ref{lem:linearization},
\begin{equation}\label{eq:expansion}
\mathcal N_\varepsilon(u)=\bigl(0,\Theta_\varepsilon(0)\bigr)+\bigl(Ju,\mathcal B_\varepsilon u\bigr)+\bigl(0,\mathcal R_\varepsilon u\bigr)+\mathcal Q_\varepsilon(u),
\end{equation}
with $\mathcal R_\varepsilon u=(\sqrt{1-\Theta_\varepsilon(0)^2}-1)\partial_\nu u+R_\varepsilon u$ and $\mathcal Q_\varepsilon$ the remainder, which vanishes to second
order at $u=0$.

The ends are handled in the inversion coordinate, by the following explicit formulas.

\begin{lemma}[The ends in the inversion coordinate]\label{lem:ends}
Let $f(x')=a\log|x'|+b+h(y)$ on $\{|x'|>R\}\subset\R^2$, where $y=x'/|x'|^2$ and $h$ is smooth near $y=0$. With $\nabla h$ and
$D^2h$ taken in $y$, put
\[
s=|y|^2,\quad P=sI-2yy^T,\quad c=y\cdot\nabla h,\quad d=|\nabla h|^2,\quad B=a^2-2ac+sd,\quad v=-ay+s\nabla h .
\]
Then the following hold.
\begin{enumerate}[label=(\roman*)]
\item $\nabla_{x'}f=ay+P\nabla h$, and $|\nabla_{x'}f|^2=sB=|v|^2$.
\item In the coordinate $y$ the metric of the graph of $f$ is $|y|^{-4}(I+vv^T)$.
\item The minimal surface operator $\mathcal M(f)=\operatorname{div}\bigl(\nabla f/\sqrt{1+|\nabla f|^2}\bigr)$ satisfies
\[
|y|^{-4}\mathcal M(f)=\frac{(1+sB)\Delta h-v^TD^2h\,v+a^3-4a^2c+2ac^2+3asd-2csd}{(1+sB)^{3/2}} .
\]
\end{enumerate}
The right-hand side of (iii) is a smooth function of $(y,\nabla h,D^2h)$, also at $y=0$, affine in $D^2h$, and its principal part
$(1+sB)I-vv^T$ has the eigenvalues $1$ and $1+sB$.
\end{lemma}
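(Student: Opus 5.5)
The plan is to treat $y=x'/|x'|^2$ as a change of variables and express every quantity through the inversion Jacobian, using two identities for $P=sI-2yy^T$ that do most of the work:
\[
P^2=s^2I,\qquad y^TP=-s\,y^T .
\]
Both follow by expanding, using $yy^Tyy^T=s\,yy^T$.

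\emph{Part (i).} Since $x'=y/|y|^2$ is an involution, the Jacobians are
\[
\frac{\partial y}{\partial x'}=|x'|^{-2}\bigl(I-2x'x'^T/|x'|^2\bigr)=P,\qquad
\frac{\partial x'}{\partial y}=P/s^2 .
\]
Both are symmetric. Hence $\nabla_{x'}(h\circ y)=P\nabla h$, and $\nabla_{x'}(a\log|x'|)=ax'/|x'|^2=ay$, which gives $\nabla_{x'}f=ay+P\nabla h$. Applying $P$ to $v$ gives $Pv=-aPy+sP\nabla h=s(ay+P\nabla h)$, because $Py=-sy$. So $\nabla_{x'}f=Pv/s$, and $|\nabla_{x'}f|^2=|v|^2$ since $P/s$ is orthogonal. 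Expanding $|v|^2=a^2s-2asc+s^2d$ gives $sB$.

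\emph{Part (ii).} The graph metric in $x'$ is $I+\nabla f\nabla f^T$. Pulling it back by $\partial x'/\partial y=P/s^2$ gives
\[
s^{-4}\,P(I+\nabla f\nabla f^T)P=s^{-4}\bigl(s^2I+(P\nabla f)(P\nabla f)^T\bigr).
\]
Since $P\nabla f=P^2v/s=sv$, this equals $s^{-2}(I+vv^T)=|y|^{-4}(I+vv^T)$.

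\emph{Part (iii).} For a graph, $\Delta_gx_3=Hn_3$ with $n_3=(1+|\nabla f|^2)^{-1/2}$, so
\[
\mathcal M(f)=(1+sB)^{1/2}\,\Delta_gf .
\]
In dimension two a conformal factor only rescales the Laplacian, so $\Delta_g=s^2\Delta_{\hat g}$ with $\hat g=I+vv^T$. For $\hat g$ we have $\det\hat g=1+|v|^2=1+sB$ and $\hat g^{-1}=I-vv^T/(1+sB)$.

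The $y$-gradient of $f$ is $\nabla_yf=-ay/s+\nabla h=v/s$, so $\hat g^{-1}\nabla_yf=v/\bigl(s(1+sB)\bigr)$. It follows that
\[
|y|^{-4}\mathcal M(f)=(1+sB)^{1/2}\,(1+sB)^{-1/2}\,\operatorname{div}_y\Bigl(\frac{v}{s\,(1+sB)^{1/2}}\Bigr).
\]
It remains to expand this divergence. I would compute $\operatorname{div}_y(v/s)=\Delta_y f$ using $\Delta_y\log|y|=0$ away from $0$, so that $\operatorname{div}_y(v/s)=\Delta h$. I would also need $\nabla_y(sB)=\nabla_y|v|^2=2(Dv)^Tv$, where
\[
Dv=-aI+2y\nabla h^T+sD^2h .
\]
Collecting terms, the $D^2h$ contributions give $(1+sB)\Delta h-v^TD^2h\,v$. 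The remaining first-order terms involve $v^Tv$, $v^Ty=-as+sc$ and $v^T\nabla h=-ac+sd$, and they should assemble into the cubic expression $a^3-4a^2c+2ac^2+3asd-2csd$.

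\emph{Main obstacle.} This last bookkeeping is the step I expect to be hardest. In particular, the factor $1/s$ must cancel exactly, and I must check that no $1/s$ singularity survives at $y=0$. I would handle it by multiplying through by $s(1+sB)^{3/2}$ and comparing polynomial coefficients. A check against the catenoid ($h=0$, where only $a^3$ survives and must match the known curvature expansion) would catch sign errors.

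\emph{Final claims.} $B$ and the numerator are polynomials in $(y,\nabla h,D^2h)$, the expression is affine in $D^2h$, and the denominator is at least $1$. So the right-hand side is smooth, also at $y=0$. For the principal part $(1+sB)I-vv^T$, the eigenvalue on $v$ is $1+sB-|v|^2=1$ and on $v^\perp$ it is $1+sB$.
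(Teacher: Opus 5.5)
Your proof is correct. Parts (i) and (ii) follow the paper's argument; your observation $\nabla_{x'}f=Pv/s$, with $P/s$ a reflection, is a slightly slicker way to get $|\nabla_{x'}f|^2=|v|^2$. For part (iii) you take a genuinely different route.

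The paper writes $\operatorname{div}_{x'}F=\operatorname{tr}(P\,D_yF)$ and expands $\operatorname{div}_{x'}\bigl((ay+P\nabla h)/\sqrt{1+sB}\bigr)$ by brute force. This means differentiating $P$ and $ay$, and the paper confirms the result symbolically. You instead use $\mathcal M(f)=\sqrt{1+|\nabla f|^2}\,\Delta_gf$, which holds directly because $g^{-1}\nabla f=\nabla f/(1+|\nabla f|^2)$. Together with the conformal covariance of the two-dimensional Laplacian, this gives
\[
|y|^{-4}\mathcal M(f)=\operatorname{div}_y\bigl(\nabla_yf/\sqrt{1+sB}\bigr).
\]
The logarithm then drops out through $\Delta_y\log|y|=0$, and only the quadratic form $v^T(Dv)\,v$ remains.

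This turns the step you flag as the main obstacle into a two-line computation. One has
\[
v^T(Dv)\,v=-a|v|^2+2(v\cdot y)(v\cdot\nabla h)+s\,v^TD^2h\,v,\qquad |v|^2=sB,\quad v\cdot y=s(c-a),\quad v\cdot\nabla h=sd-ac .
\]
So the factor $s^{-1}$ cancels, and the first-order terms are
\[
a(a^2-2ac+sd)-2(c-a)(sd-ac)=a^3-4a^2c+2ac^2+3asd-2csd ,
\]
exactly the paper's numerator.

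There is one harmless slip. With the convention $(Dv)_{ij}=\partial_jv_i$, which is what makes $\nabla_y|v|^2=2(Dv)^Tv$ correct, one has $Dv=-aI+2\nabla h\,y^T+sD^2h$, not $2y\nabla h^T$. This does not matter, since only the quadratic form $v^T(Dv)\,v$ enters and it is invariant under transposition.

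What each route buys: yours can be checked by hand and shows why no $s^{-1}$ singularity survives at $y=0$. The paper's direct expansion is more mechanical and relies on the symbolic verification.
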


\begin{proof}
Since $x'=y/|y|^2$, one has $D_{x'}y=P$, $P^2=s^2I$ and $Py=-sy$. Hence $\nabla_{x'}\log|x'|=x'/|x'|^2=y$ and
$\nabla_{x'}(h\circ y)=P\nabla h$, which gives the first formula in (i); then $|ay+P\nabla h|^2=a^2s+2a\,y^TP\nabla h+s^2d=s(a^2-2ac+sd)$,
and $|v|^2$ is the same. For (ii), $D_yx'=P^{-1}=s^{-2}P$, so the metric $I+\nabla f\,\nabla f^T$ becomes
$s^{-4}P(I+\nabla f\,\nabla f^T)P=s^{-2}I+s^{-4}(P\nabla f)(P\nabla f)^T$, and $P\nabla f=-asy+s^2\nabla h=sv$. For (iii), write
$\operatorname{div}_{x'}F=\operatorname{tr}(P\,D_yF)$ for a vector field $F$ and expand $\operatorname{div}_{x'}(\nabla f/\sqrt{1+sB})$ by (i). The
result is an identity between rational functions of $y$, $a$ and the derivatives of $h$ of order at most two. It is a
direct computation, and we also verified it symbolically (Section~\ref{sec:num_scripts}). The last sentence follows from
$|v|^2=sB$.
\end{proof}

\begin{lemma}\label{lem:nonlinear}
There are $c_0>0$ and $C$, independent of $\varepsilon$, such that for $G$-equivariant $u,v$ with $\|u\|_X,\|v\|_X\leq c_0$,
\[
\|\Theta_\varepsilon(0)\|_Z\leq C\varepsilon,\qquad \|\mathcal R_\varepsilon u\|_Z\leq C\varepsilon\|u\|_X,\]
\[
\|\mathcal Q_\varepsilon(u)-\mathcal Q_\varepsilon(v)\|_{Y\times Z}\leq C\bigl(\|u\|_X+\|v\|_X\bigr)\|u-v\|_X .
\]
\end{lemma}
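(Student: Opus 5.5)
The three estimates are handled separately, and in each case the weighted norms are reduced to unweighted ones on pieces of unit size by rescaling. The first two are short. For $\|\Theta_\varepsilon(0)\|_Z\leq C\varepsilon$ I would quote Lemma~\ref{lem:error} directly. It gives $\sup|\Theta_\varepsilon(0)|\leq C\varepsilon$, and it bounds the tangential derivatives of order at most two measured at scale $\varepsilon$. These derivative bounds control $\varepsilon\sup|\partial_s\Theta_\varepsilon(0)|$ and $\varepsilon^{1+\alpha}[\partial_s\Theta_\varepsilon(0)]_\alpha$, since $s$ is comparable to the Euclidean arclength up to the factor $\pi$ (Lemma~\ref{lem:saddle}).

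For $\mathcal R_\varepsilon u$ there are two terms. On $\partial\Omega_\varepsilon$ we have $\vartheta=\varepsilon$, so $\|u\|_X$ controls $\varepsilon^{-1}|u|$, $|\nabla u|$ and $\varepsilon|\nabla^2u|$ together with the Hölder seminorm at scale $\varepsilon$. The term $R_\varepsilon u=-\langle A(V^T),x^T\rangle/\varepsilon$ equals $u$ times a smooth coefficient that is $O(\varepsilon)$, with derivatives that are $O(1)$ at scale $\varepsilon$. This gives $\|R_\varepsilon u\|_Z\leq C\varepsilon\sup|u|\leq C\varepsilon^2\|u\|_X$. The other term is $(\sqrt{1-\Theta_\varepsilon(0)^2}-1)\partial_\nu u$, whose coefficient is $O(\varepsilon^2)$ in $Z$ by the first estimate. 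Since $\|\partial_\nu u\|_Z\leq C\|u\|_X$, this term is $O(\varepsilon^2)\|u\|_X$. So both terms are in fact $O(\varepsilon)$, as claimed.

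The main work is the quadratic estimate for $\mathcal Q_\varepsilon$. Here I would localize to three regions and in each write $\mathcal N_\varepsilon$ as a smooth function of $(p,u,\nabla u,\nabla^2u)$ in suitably scaled variables.

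\emph{Near the hole}, where $\vartheta\leq2\rho_0$, take $p$ with $\vartheta(p)=r$. Rescale by $r$, setting $y=\pi w/r$ and $\tilde u=u/r$, and rescale the surface by $r^{-1}$. The flow $\Phi$ commutes with dilations, because $W$ is homogeneous of degree $0$ there, and the rescaled surfaces $r^{-1}M$ converge to the plane in $C^\infty$. Because of this homogeneity, the rescaled mean curvature $r\,H$ and the angle $\Theta$ are smooth functions of $(y,\tilde u,\nabla\tilde u,\nabla^2\tilde u)$, with bounds uniform in $r$ and $\varepsilon$ on the ball $B_p$. The $X$-norm is exactly the unweighted $C^{2,\alpha}$ norm of $\tilde u$ on the unit ball, and the $Y$- and $Z$-norms correspond to unweighted $C^{0,\alpha}$ and $C^{1,\alpha}$ norms. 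Note that $e^{2\phi}H$ weighted by $\vartheta$ matches $rH$ up to bounded factors. Taylor's formula with integral remainder for a smooth map then gives the quadratic Lipschitz bound on each $B_p$, and taking the supremum gives it globally.

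\emph{On the compact middle region} the same argument applies with no rescaling.

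\emph{At the ends} I expect the main obstacle: the norms are taken with the compactified metric $\tilde g$, in which the ends are interior points of $\overline T$. I would use Lemma~\ref{lem:ends}. Beyond $2R_\infty+1$ the surface $\Sigma_\varepsilon(u)$ is the graph of $f_j+\varsigma u/\theta$, where $f_j$ has the form $a\log|x'|+b+h(y)$ with $h$ smooth. The perturbed function has the same form with $h$ replaced by $h+\varsigma u/\theta$. The function $\theta$ is smooth on $\overline T$, and $y$ is a smooth conformal coordinate of $\overline T$ at the end, so $u/\theta$ is $C^{2,\alpha}$ in $y$ with norm bounded by $C\|u\|_X$. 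By Lemma~\ref{lem:ends}(iii), $|y|^{-4}\mathcal M$ is a smooth function of $(y,\nabla h,D^2h)$, affine in $D^2h$, up to $y=0$. Also $e^{2\phi}H$ differs from $|y|^{-4}\mathcal M$ by a smooth nonvanishing factor, which can be read off from the metric formula in Lemma~\ref{lem:ends}(ii). Hence the map $u\mapsto e^{2\phi}H$ is smooth from $C^{2,\alpha}$ to $C^{0,\alpha}$ near each end, uniformly in $\varepsilon$, since $\varepsilon$ plays no role there. The quadratic estimate then follows from the mean value theorem applied to the second derivative.

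Finally, $c_0$ is chosen small enough that all flow times $|u/\theta|$ stay in the ranges where these descriptions are valid. Near the hole the relevant quantity is the rescaled size $|\tilde u|\leq c_0$.
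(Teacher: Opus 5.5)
Your proposal is correct and follows the paper's proof essentially step for step. The first two bounds come from Lemma~\ref{lem:error} and the $O(\varepsilon)$ size of $R_\varepsilon$ on the boundary. The quadratic estimate comes from rescaling by $\vartheta(p)$ near the hole (using that $W$ is homogeneous of degree zero), smooth dependence on the compact part, and Lemma~\ref{lem:ends} at the ends with $h$ replaced by $h+\varsigma u/\theta$ and the smooth positive factor $e^{2\phi}|y|^4$.

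One small slip: $y=x'/|x'|^2$ is a smooth but not a conformal coordinate of $\overline T$ at an end. Lemma~\ref{lem:ends}(ii) already shows this, since the metric there is $|y|^{-4}(I+vv^T)$. Your argument only needs smoothness, so nothing breaks. The smoothness of $h$ in $y$ should still be justified, as the paper does, by the vanishing of the residues of the horizontal forms at the ends.
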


\begin{proof}
The first bound is Lemma~\ref{lem:error}. For the second, $\sqrt{1-\Theta_\varepsilon(0)^2}-1=O(\varepsilon^2)$ and $|\partial_\nu u|\leq\|u\|_X$;
and $|R_\varepsilon u|\leq C\varepsilon|u|\leq C\varepsilon^2\|u\|_X$ on the boundary, where $\vartheta=\varepsilon$; the tangential derivatives are estimated in the
same way.

For the third we use scaling. \emph{Near the hole} ($\vartheta\leq\rho_0$), fix $p$ and rescale $\R^3$ by $\vartheta(p)^{-1}$. The
rescaled piece of $M$ over $B(p,\vartheta(p)/(4\pi))$ (or over $\{\varepsilon\leq\vartheta\leq4\varepsilon\}$ when $\vartheta(p)\leq2\varepsilon$) is the graph of a function with $C^{3,\alpha}$ norm $O(\vartheta(p))$, the rescaled field
$W$ is homogeneous of degree zero near the origin and hence has bounded derivatives, and the rescaled function
$\hat u=\vartheta(p)^{-1}u$ has $C^{2,\alpha}$ norm at most $C\|u\|_X$. The rescaled mean curvature and boundary angle are smooth
functions of $(\hat u,\nabla\hat u,\nabla^2\hat u)$ and of the rescaled geometry, with uniformly bounded derivatives, so their
second-order remainders satisfy the quadratic estimate in $C^{0,\alpha}$ and $C^{1,\alpha}$ at unit scale. Scaling back, the mean
curvature scales like $\vartheta^{-1}$ and $e^{2\phi}=\pi^2+O(\vartheta^2)$ on $U$, which is exactly the weight $\vartheta$ in $\|\cdot\|_Y$; the
boundary angle is scale invariant, which matches $\|\cdot\|_Z$.
\emph{On the compact part} of $\overline T$ away from $\omega$ and from the ends, $\Sigma_\varepsilon(u)$ is the image of a fixed piece of $M$
under $\Phi^{u/\theta}$ with $W$ smooth and $\theta\geq\frac12$, and the estimate is the smooth dependence of the mean curvature on
$(u,\nabla u,\nabla^2u)$.
\emph{Near an end} ($|\x(p)|\geq2R_\infty+1$) the end is the graph $x_3=f(x')$ over $\{|x'|>R\}$ with
$f=a\log|x'|+b+h(y)$ as in Lemma~\ref{lem:ends}, and $a=0$ for the planar end. Here $h$ is smooth in $y$: the horizontal
coordinates of $M$ have no residues at the ends, so $y=x'/|x'|^2$ is a smooth (not conformal) coordinate of $\overline T$ at the end point,
and $x_3-a\log|x'|$ is a smooth function of it (see also~\cite{Schoen1983}). By Lemma~\ref{lem:ends}(ii), $g_M=|y|^{-4}(I+vv^T)$ in this
coordinate; since also $g_M=e^{2\phi}\tilde g$ with $\tilde g$ smooth, $e^{2\phi}|y|^4$ is a smooth positive function of $y$. Since $W=\varsigma\,e_3$
there, $\Sigma_\varepsilon(u)$ is the graph of $f+w$ over the same point, with $w=\varsigma\,u\sqrt{1+|\nabla f|^2}=\varsigma\,u\sqrt{1+sB}$;
multiplication by this smooth positive function of $y$ is an isomorphism of $C^{2,\alpha}$ in $y$. Lemma~\ref{lem:ends}, applied to
$h+w$ in place of $h$ (the coefficient $a$ does not change), gives $|y|^{-4}\mathcal M(f+w)=\mathcal A\bigl(y,\nabla(h+w),D^2(h+w)\bigr)$, with $\mathcal A$
smooth in all its arguments up to $y=0$ and affine in the last one. Hence $w\mapsto|y|^{-4}\mathcal M(f+w)$ is a smooth map from
$C^{2,\alpha}$ to $C^{0,\alpha}$ in $y$ near the end point, and its remainder after the linear term satisfies the quadratic estimate.
Moreover, every term of degree at least two in $w$ carries a factor $|y|^2$. Indeed, in Lemma~\ref{lem:ends}(iii) the first
derivatives of $h$ enter only through $sd$, through $c=y\cdot\nabla h$ and through $v=-ay+s\nabla h$; a product of two of these that both
depend on $w$ contains $s$, $|y|^2$ or $s|y|$, and the denominator depends on $w$ only through $sB=a^2s-2asc+s^2d$. With respect to $\n$ the mean curvature of the graph of $F$ is $\varsigma\,\mathcal M(F)$, $\mathcal M$ being computed for the upward
normal. So $e^{2\phi}H=\varsigma\,(e^{2\phi}|y|^4)\,|y|^{-4}\mathcal M(f+w)$, and this gives the quadratic estimate in $C^{0,\alpha}(\overline T)$ near
the end point, with a bound $O(|y|^2)$.
The different regions overlap on sets of bounded geometry, where the
estimates agree.
\end{proof}

\begin{proof}[Proof of Theorem~\ref{thm:A}]
Let $\mathcal G_\varepsilon=(J,\mathcal B_\varepsilon)^{-1}$ from Proposition~\ref{prop:uniform}. By~\eqref{eq:expansion}, $\mathcal N_\varepsilon(u)=0$ is equivalent to
the fixed-point equation
\[
u=\mathcal T_\varepsilon(u):=-\mathcal G_\varepsilon\Bigl(\bigl(0,\Theta_\varepsilon(0)+\mathcal R_\varepsilon u\bigr)+\mathcal Q_\varepsilon(u)\Bigr)
\]
in $X^G$. All operators preserve $G$-equivariance. Take $\varepsilon$ so small that $2C^2\varepsilon\leq c_0$. By Lemma~\ref{lem:nonlinear}, on the ball
$\{\|u\|_X\leq 2C^2\varepsilon\}$ one has $\|\mathcal T_\varepsilon(u)\|_X\leq C(C\varepsilon+C\varepsilon\|u\|_X+C\|u\|_X^2)\leq2C^2\varepsilon$ and
$\|\mathcal T_\varepsilon(u)-\mathcal T_\varepsilon(v)\|_X\leq C^2(\varepsilon+4C^2\varepsilon)\|u-v\|_X\leq\frac12\|u-v\|_X$ for $\varepsilon$ small. The contraction
mapping theorem gives a $G$-equivariant $u_\varepsilon$ with $\mathcal N_\varepsilon(u_\varepsilon)=0$ and $\|u_\varepsilon\|_X\leq C\varepsilon$. By elliptic regularity for the
nonlinear oblique boundary condition (bootstrapping the estimates of~\cite[Section~6.7]{GilbargTrudinger} by difference quotients),
$u_\varepsilon$ is smooth up to the boundary. Put $\Sigma_\varepsilon=\varepsilon^{-1}\Sigma_\varepsilon(u_\varepsilon)$.

\emph{Free boundary.} $\Sigma_\varepsilon(u_\varepsilon)$ is minimal, its boundary lies on $\partial B_\varepsilon$ (Section~\ref{sec:perturb}) and it meets
$\partial B_\varepsilon$ orthogonally, since $\Theta_\varepsilon(u_\varepsilon)=0$.

\emph{Exteriority.} For $\varepsilon\leq|\x(p)|\leq2\rho_0$ the flow preserves $|x|$, so the image point has norm $|\x(p)|$, which equals
$\varepsilon$ only on $\partial\Omega_\varepsilon$. For $|\x(p)|\geq2\rho_0$ the displacement is at most $C|u_\varepsilon(p)|\leq C\varepsilon$, so the image stays
outside $B_{\rho_0}$. Hence $\Sigma_\varepsilon(u_\varepsilon)\cap\overline B_\varepsilon=\partial\Sigma_\varepsilon(u_\varepsilon)$.

\emph{Embeddedness and ends.} We check injectivity in three regions of $M$: (1) the core $\{\varepsilon\leq|\x(p)|<\rho_0\}$, (2) the ends
$\{|\x(p)|\geq2R_\infty+1\}$, and (3) the compact part $\{\rho_0\leq|\x(p)|\leq3R_\infty\}$; (2) and (3) overlap. Images of the core lie in $B_{\rho_0}$
and all other images lie outside it, since the flow preserves $|x|$ for $|x|\leq2\rho_0$ and its flow lines do not enter $\overline B_{2\rho_0}$ from
outside. If two points $p,q$ outside the core have the same image, then $|\x(p)-\x(q)|\leq4C\varepsilon$, so for $\varepsilon$ small both lie in (2) or
both in (3). It therefore suffices to check injectivity within each region. (1) On $\{\varepsilon\leq|x|\leq2\rho_0\}$ the flow moves points along meridians of the
spheres $\partial B_r$ and keeps $r$ and the azimuth. By Lemma~\ref{lem:saddle}, for $\varepsilon\leq r\leq2\rho_0$ the curve $M\cap\partial B_r$ is a
graph over the azimuth in the band $|x_3|\leq r/5$, and moving each of its points along its meridian by at most
$|u_\varepsilon/\theta|\leq2C\varepsilon r$ gives again such a graph. Distinct spheres are disjoint, so this part of $\Sigma_\varepsilon(u_\varepsilon)$ is embedded.
(2) There $\Sigma_\varepsilon(u_\varepsilon)$ consists of the three vertical graphs of $f_j+v_j$ with $|v_j|\leq C\varepsilon$ over the exterior of a disk, and
$|f_i-f_j|\geq\delta>0$ on the common domain, so the perturbed graphs are disjoint for $C\varepsilon<\delta/2$.
(3) On this compact piece of the embedded surface $M$, where $W$ is smooth and transverse, $(p,t)\mapsto\Phi^t(\x(p))$ is an embedding
of $\{|t|<t_0\}$ for some $t_0>0$, and $|u_\varepsilon/\theta|\leq C\varepsilon<t_0$. In the coordinate $y$ at an end, $u_\varepsilon$ is $C^{2,\alpha}$ up to the end point. At the catenoidal ends
$\Sigma_\varepsilon(u_\varepsilon)$ is the graph of $f_j+v_j$ with $v_j\to u_\varepsilon(\text{end})$, a vertical translation, and the two translations are opposite
by $\tau$; at the planar end $u_\varepsilon$ vanishes to second order by Lemma~\ref{lem:modes}. So the three ends remain embedded, parallel,
of multiplicity one, of finite total curvature and of the same types.

\emph{Topology and the rest.} $\Sigma_\varepsilon$ is diffeomorphic to $\overline T\setminus\bigl(D_\varepsilon\cup\{\text{three points}\}\bigr)$: genus one,
one boundary circle, three ends; it is connected. It is proper, because $M$ is proper and the displacement is bounded.
It is $G$-invariant because $u_\varepsilon$ is $G$-equivariant. The bound
$\|u_\varepsilon\|_X\leq C\varepsilon$ gives (iii). For (iv), $\Sigma_\varepsilon$ is connected, orientable and properly immersed with
$\Sigma_\varepsilon\cap S^2=\partial\Sigma_\varepsilon$, and its ends are parallel, embedded and regular in the sense
of~\cite[(2.5)]{MazetMendes2022}: each is the graph of $f_j+v_j$ with $v_j$ of class $C^{2,\alpha}$ in $y=x'/|x'|^2$, so
$v_j=v_j(0)+\nabla v_j(0)\cdot y+O(|y|^2)$, which is the required expansion $a\log|x'|+b+(c,x')|x'|^{-2}+O(|x'|^{-2})$. It is not
catenoidal, that is, not invariant under the rotations about a line, because it has three ends. So it is unstable
by~\cite[Theorem~1.1]{MazetMendes2022}.
\end{proof}

\section{Costa--Hoffman--Meeks surfaces of every genus}\label{sec:chm}

The proof of Theorem~\ref{thm:A} uses the Costa surface through three properties: a point with horizontal tangent plane
at which the symmetries allow only Fourier modes of frequency at least two, nondegeneracy, and the absence of
equivariant bounded Jacobi fields. The Costa--Hoffman--Meeks surfaces $M_k$ of genus $k\geq1$~\cite{HoffmanMeeks1990} have all
three. This section proves Theorem~\ref{thm:B}. Throughout, $m=k+1$.

\subsection{The surfaces}\label{sec:chm_data}
Let $\overline M_k$ be the compact Riemann surface of the curve
\[
w^m=z^{m-1}(z^2-1),
\]
that is, its normalization: for $k>1$ the curve is singular at $(0,0)$, but the corresponding point of $\overline M_k$ is smooth. The
projection $(z,w)\mapsto z$ has degree $m$ and is totally ramified exactly over $0$, $1$, $-1$ and $\infty$, so by the Riemann--Hurwitz
formula $2-2\,\mathrm{genus}=2m-4(m-1)$ and the genus is $k$. Let $p_0,p_+,p_-,p_\infty$ be the points over $0,1,-1,\infty$, and put
\[
g=\frac{c_k}{w},\qquad dh=\frac{dz}{z^2-1},\qquad \Phi=\Bigl(\tfrac12(g^{-1}-g),\ \tfrac i2(g^{-1}+g),\ 1\Bigr)dh,\qquad
\x(p)=\Ree\int_{p_0}^p\Phi .
\]
For a unique $c_k>0$ the periods vanish. Then $\x$ is a proper embedding of $\overline M_k\setminus\{p_+,p_-,p_\infty\}$ onto a complete
minimal surface $M_k$ of total curvature $-4\pi(k+2)$, with catenoidal ends at $p_\pm$ and a planar end at $p_\infty$: the
Costa--Hoffman--Meeks surface of genus $k$~\cite{HoffmanMeeks1990}. The data in this form are in~\cite{Fujimori2014}, with
\[
c_k^2=\frac{2A_k}{B_k},\qquad A_k=\int_0^1\Bigl(\frac t{1-t^2}\Bigr)^{k/(k+1)}dt,\qquad B_k=\int_0^1\frac{dt}{(t^k(1-t^2))^{1/(k+1)}} ;
\]
Section~\ref{sec:num_chm} checks this formula numerically. For $k=1$ the substitution $z=\wp/e_1$, $w=\wp'/(2e_1^{3/2})$ turns the data
of Section~\ref{sec:costa} into these, with $c_1=\sqrt{2\pi/e_1}$ and $\mathfrak a\,\wp\,dz/\wp'=\frac{\mathfrak a}{4e_1}\,dh$; so $M_1$ is the Costa surface
scaled by $4e_1/\mathfrak a$.

\begin{lemma}[The central point]\label{lem:chm_center}
Near $p_0$ there is a local coordinate $t$ with $z=-t^m$ and $w=-t^{m-1}(1-t^{2m})^{1/m}$, the branch of the root being $1$ at
$t=0$. With $\ell_k=mc_k/2$ and $Z=x_1+ix_2$,
\[
Z=\ell_k\,t-\frac{m}{2(2m-1)c_k}\,\overline t^{\,2m-1}+O(|t|^{2m+1}),\qquad x_3=\Ree\operatorname{artanh}(t^m),
\]
and $|\x|=\ell_k|t|\bigl(1+O(|t|^{2k})\bigr)$. Consequently $\x(p_0)=0$, $\n(p_0)=e_3$, and near the origin $M_k$ is the graph
\[
x_3=f_k(x')=a_k\Ree(x_1+ix_2)^{k+1}+O(|x'|^{3k+1}),\qquad a_k=\ell_k^{-(k+1)} .
\]
\end{lemma}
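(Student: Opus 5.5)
The plan is to verify everything by direct expansion in a uniformizing coordinate at $p_0$, in the same way as Lemma~\ref{lem:saddle}. \emph{Step 1: the coordinate.} Since the projection $(z,w)\mapsto z$ is totally ramified over $0$ and $\overline M_k$ is smooth at $p_0$, a local coordinate $t$ at $p_0$ can be chosen with $z=-t^m$. Substituting gives $z^{m-1}(z^2-1)=(-1)^{m}t^{m(m-1)}(1-t^{2m})$. Hence $w=-t^{m-1}(1-t^{2m})^{1/m}$, with the branch equal to $1$ at $t=0$, satisfies the curve equation; replacing $t$ by $\zeta t$ with $\zeta^m=1$ selects this sheet. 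In this coordinate $g=c_k/w$ has a pole of order $m-1=k\geq1$ at $t=0$, so $\n(p_0)=e_3$. Also $\x(p_0)=0$, since $p_0$ is the base point of the integral.

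\emph{Step 2: the Weierstrass forms.} Compute
\[
dh=\frac{m\,t^{m-1}dt}{1-t^{2m}},\qquad g\,dh=-m c_k(1-t^{2m})^{-1-1/m}dt,\qquad g^{-1}dh=-\frac{m}{c_k}\,t^{2m-2}(1-t^{2m})^{1/m-1}dt .
\]
Since $\frac{d}{dt}\operatorname{artanh}(t^m)=m t^{m-1}/(1-t^{2m})$, the first form integrates exactly to $x_3=\Ree\operatorname{artanh}(t^m)$. For the horizontal part, the convention $\Phi=(\frac12(g^{-1}-g),\frac i2(g^{-1}+g),1)dh$ gives
\[
Z=\tfrac12\Bigl(-\int_{p_0}^p g\,dh+\overline{\int_{p_0}^p g^{-1}dh}\Bigr).
\]
Integrating the two expansions, whose corrections are $O(|t|^{2m})$ relative to the leading terms, yields $Z=\ell_k t-\frac{m}{2(2m-1)c_k}\overline t^{\,2m-1}+O(|t|^{2m+1})$ with $\ell_k=mc_k/2$.

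\emph{Step 3: consequences.} From Step 2, $|Z|=\ell_k|t|(1+O(|t|^{2m-2}))$ and $x_3=O(|t|^m)$. Since $2m-2=2k$ and $2(m-1)\ge 2k$, this gives $|\x|=\ell_k|t|(1+O(|t|^{2k}))$. For the graph, invert Step 2 as $t=Z/\ell_k+O(|Z|^{2m-1})$. Then use $\operatorname{artanh}(t^m)=t^m+O(|t|^{3m})$, and note that perturbing $t$ by $\delta t=O(|t|^{2m-1})$ changes $t^m$ by $O(|t|^{m-1}|\delta t|)=O(|t|^{3m-2})$. This gives $x_3=\ell_k^{-m}\Ree Z^m+O(|x'|^{3m-2})$, and $3m-2=3k+1$.

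The only delicate point is bookkeeping the error orders. The $\overline t^{\,2m-1}$ term must be kept in $Z$, but it only affects $x_3$ at order $3k+1$, which is why the remainder is $O(|x'|^{3k+1})$ rather than something smaller. One should also check that the inversion $t\mapsto Z$ is a local diffeomorphism, which holds since its differential at $0$ is $\ell_k\neq0$. For $k=1$, as a consistency check, the result should reduce to Lemma~\ref{lem:saddle} after the rescaling by $4e_1/\mathfrak a$.
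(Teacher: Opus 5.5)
Your proposal is correct and follows the paper's proof essentially step for step. It uses the same coordinate $t$ with $z=-t^m$, the same closed forms for $g\,dh$ and $g^{-1}dh$, and integrates from $t=0$ via $Z=\frac12\bigl(-\int g\,dh+\overline{\int g^{-1}dh}\bigr)$. It then inverts as $t=Z/\ell_k+O(|Z|^{2m-1})$ to get the remainder $O(|x'|^{3m-2})=O(|x'|^{3k+1})$. Your normalization of the branch by $t\mapsto\zeta t$ with $\zeta^m=1$ is only a slightly more explicit version of the paper's remark that the formulas parametrize the curve near $p_0$.
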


\begin{proof}
Since $z^{m-1}(z^2-1)=(-t^m)^{m-1}(t^{2m}-1)=(-1)^mt^{m(m-1)}(1-t^{2m})$, the formulas parametrize the curve near $p_0$, and $t$ is a
local coordinate because the projection is totally ramified there. Then $g=-c_kt^{-(m-1)}(1-t^{2m})^{-1/m}$ and
$dh=mt^{m-1}(1-t^{2m})^{-1}dt$: the pole of $g$ is cancelled by the zero of $dh$, and
\begin{gather*}
\Phi_1+i\Phi_2=-g\,dh=mc_k(1-t^{2m})^{-1-1/m}dt,\\
\Phi_1-i\Phi_2=\frac{dh}g=-\frac m{c_k}\,t^{2m-2}(1-t^{2m})^{-1+1/m}dt .
\end{gather*}
Integrating from $t=0$ and using $Z=\frac12\bigl(\int(\Phi_1+i\Phi_2)+\overline{\int(\Phi_1-i\Phi_2)}\bigr)$ gives the expansions; $g(p_0)=\infty$ gives
$\n(p_0)=e_3$. Inverting, $t=Z/\ell_k+O(|Z|^{2m-1})$, so $x_3=\Ree(t^m)+O(|t|^{3m})=\ell_k^{-m}\Ree Z^m+O(|Z|^{3m-2})$, and $3m-2=3k+1$. Finally
$|\x|^2=|Z|^2+x_3^2=\ell_k^2|t|^2(1+O(|t|^{2k}))$.
\end{proof}

For $k=1$, after the scaling by $\mathfrak a/(4e_1)$, $a_1=e_1/(2\pi)$ becomes $\kappa/2$, and Lemma~\ref{lem:chm_center} becomes
Lemma~\ref{lem:saddle}. For $k\geq2$ the central point is a saddle of order $k+1$ with $|A|(p_0)=0$; the construction needs only the
regularity of the immersion there and the graph expansion. Figure~\ref{fig:chm} shows $M_2$ and $M_3$ with a hole.

\begin{figure}[ht]
\centering
\begin{subfigure}{0.36\textwidth}\includegraphics[width=\textwidth]{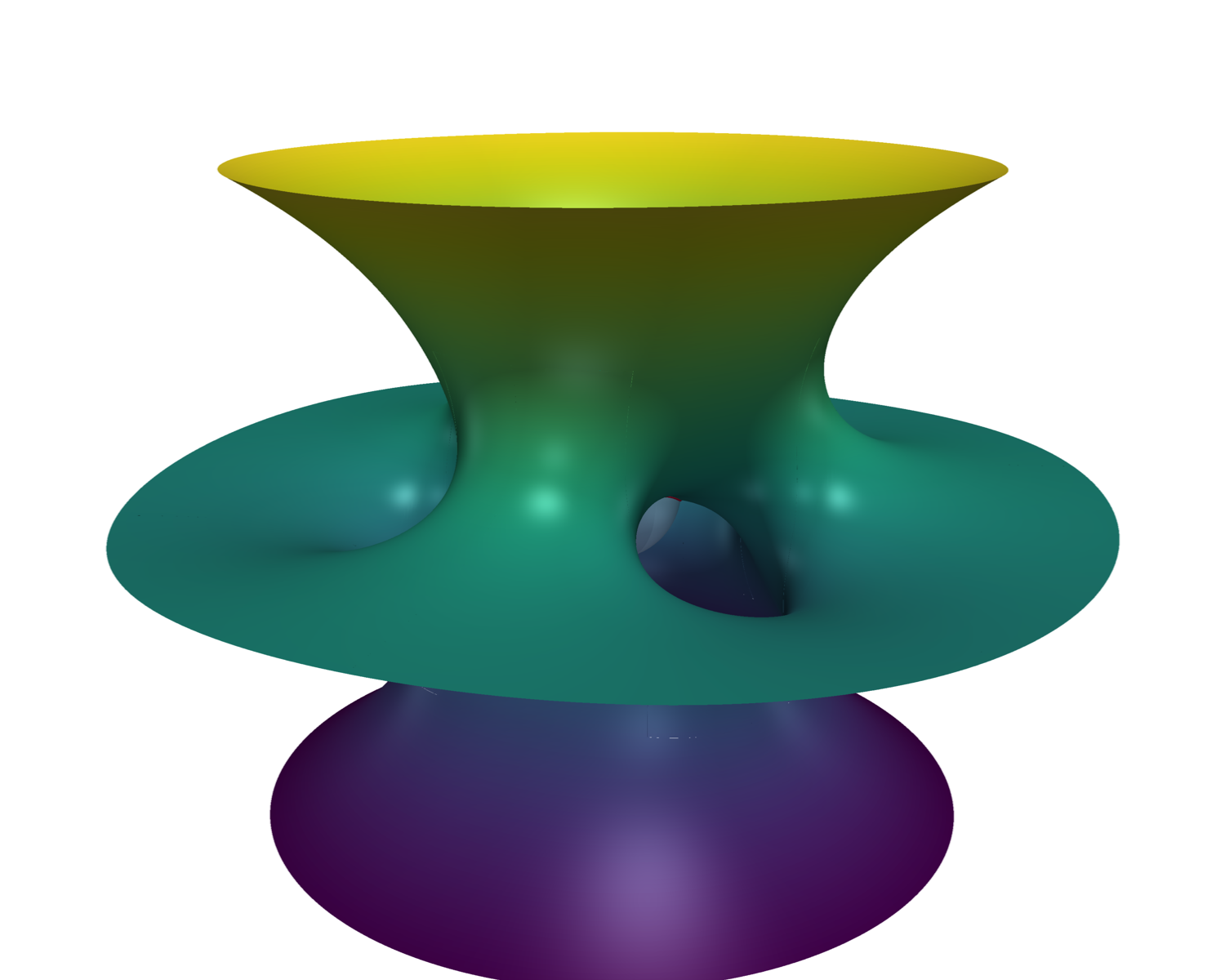}\caption{$k=2$, $\varepsilon=0.56$}\end{subfigure}
\begin{subfigure}{0.31\textwidth}\includegraphics[width=\textwidth]{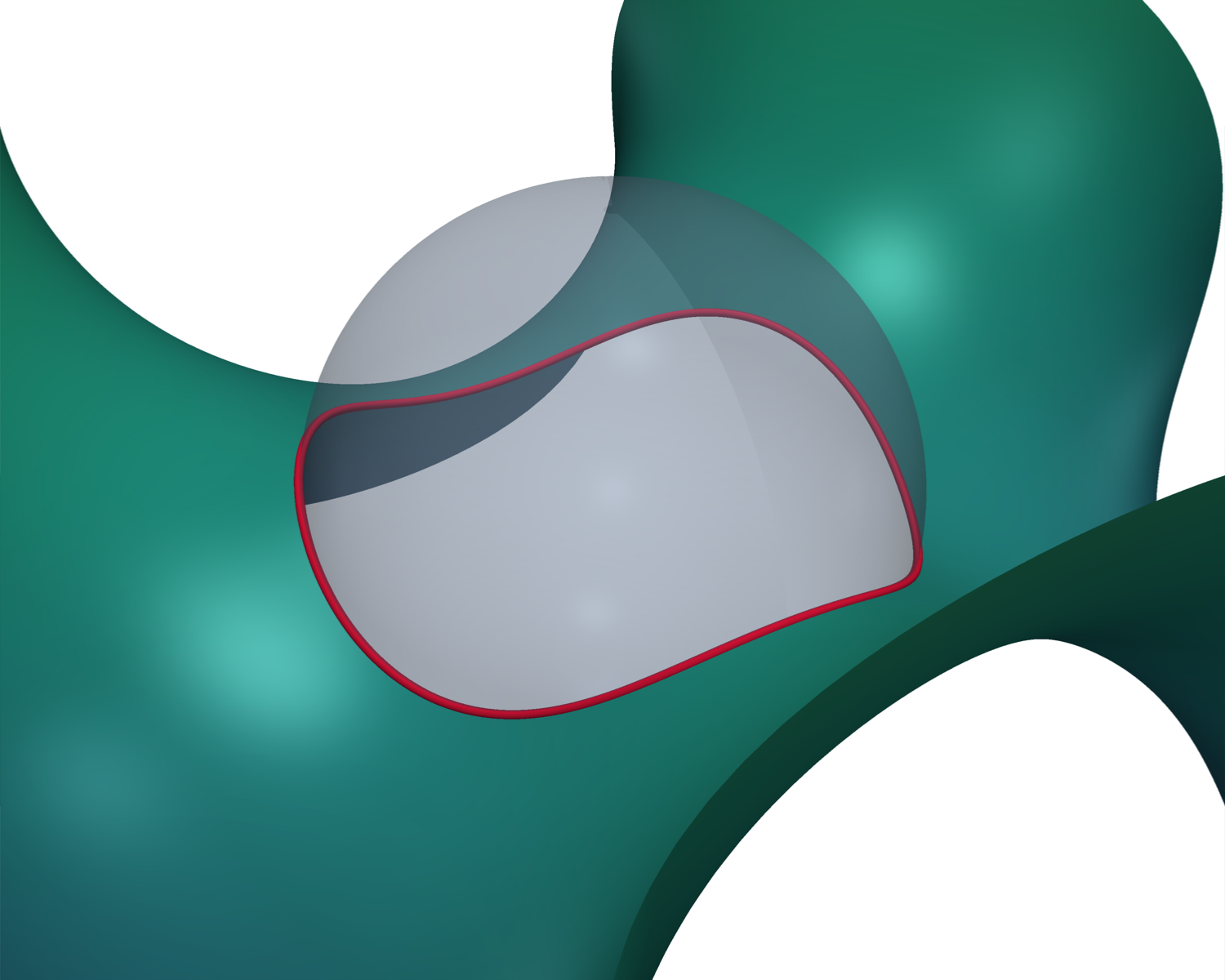}\caption{$k=2$, near the sphere}\end{subfigure}
\begin{subfigure}{0.31\textwidth}\includegraphics[width=\textwidth]{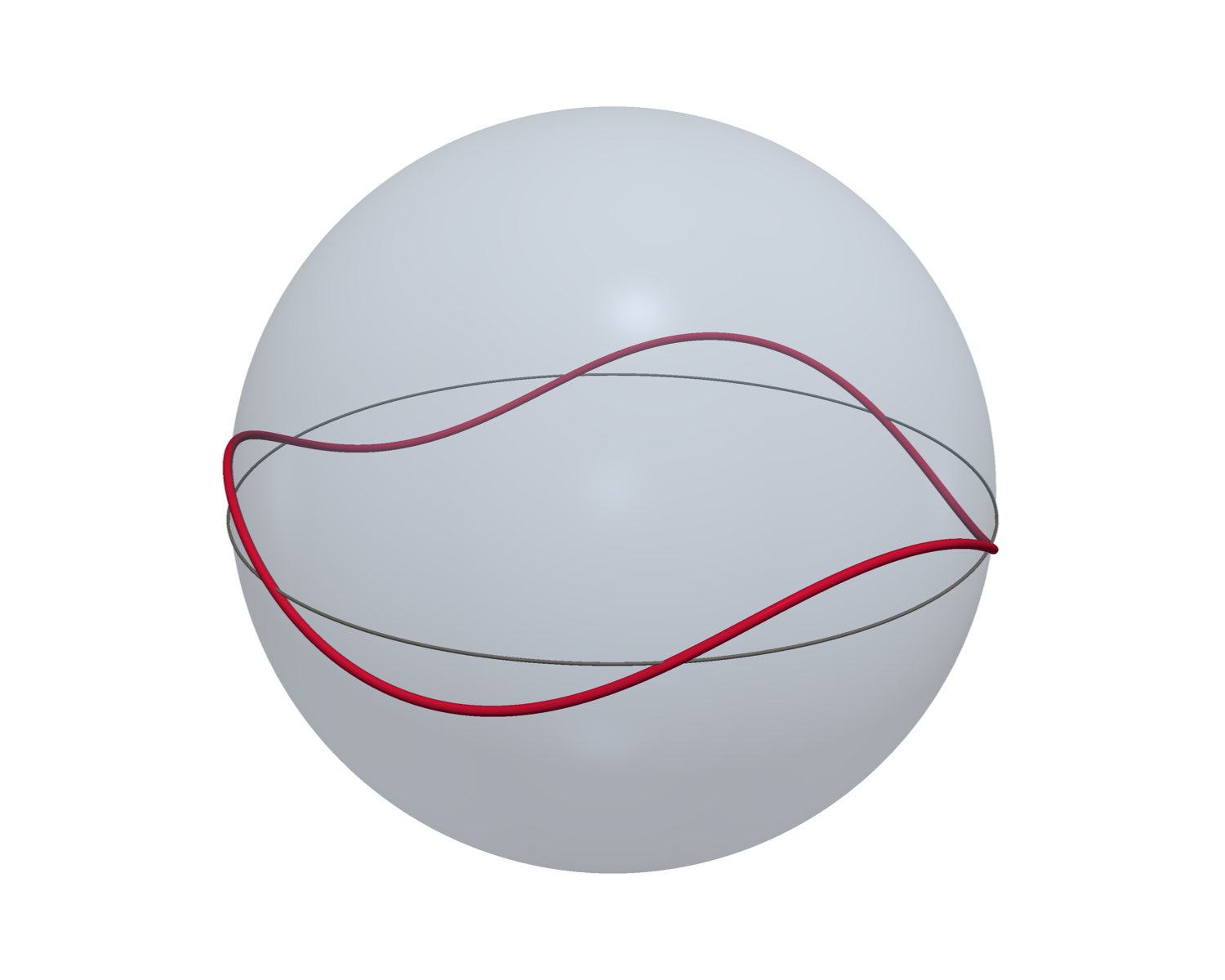}\caption{$k=2$, the boundary curve}\end{subfigure}

\medskip
\begin{subfigure}{0.36\textwidth}\includegraphics[width=\textwidth]{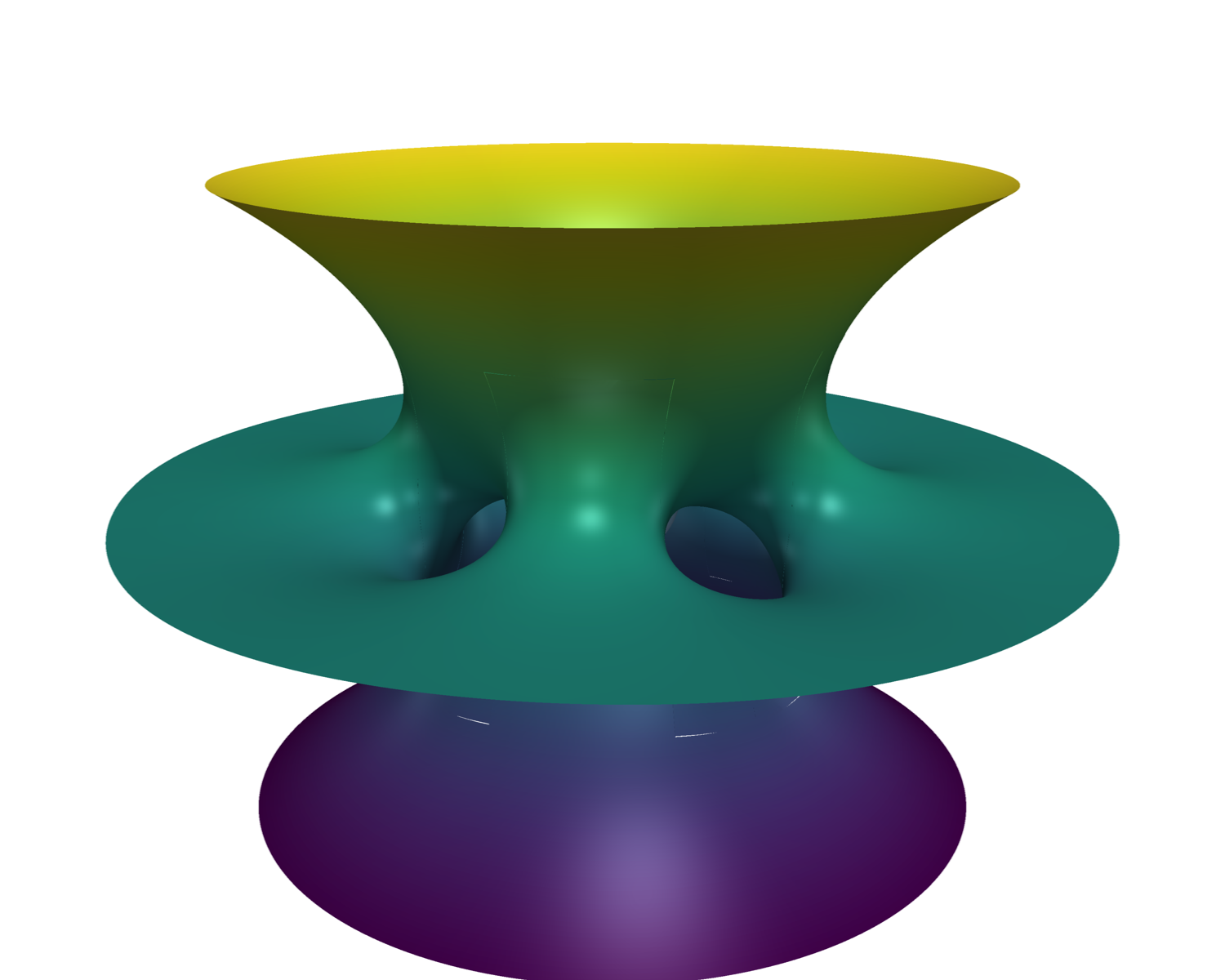}\caption{$k=3$, $\varepsilon=0.72$}\end{subfigure}
\begin{subfigure}{0.31\textwidth}\includegraphics[width=\textwidth]{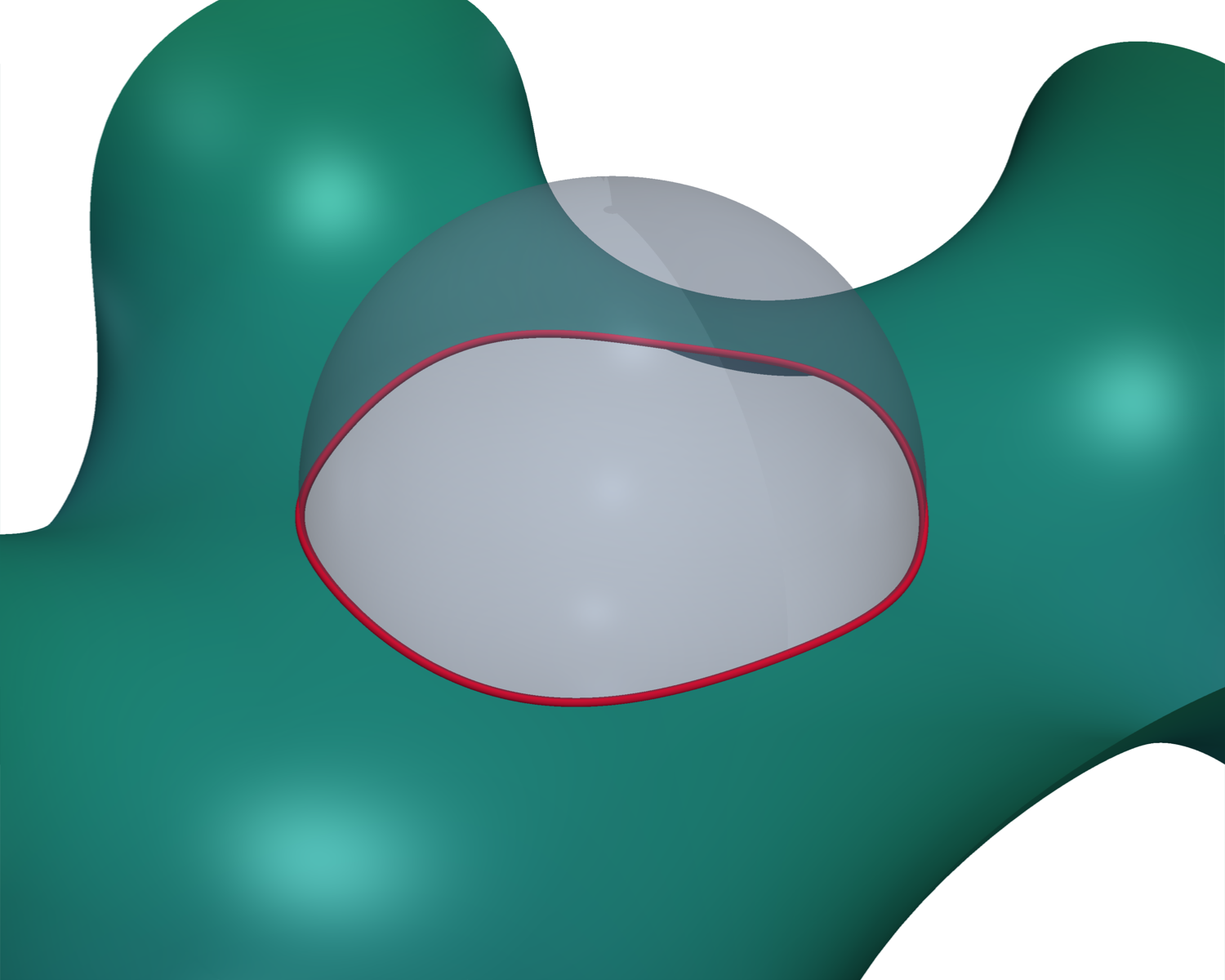}\caption{$k=3$, near the sphere}\end{subfigure}
\begin{subfigure}{0.31\textwidth}\includegraphics[width=\textwidth]{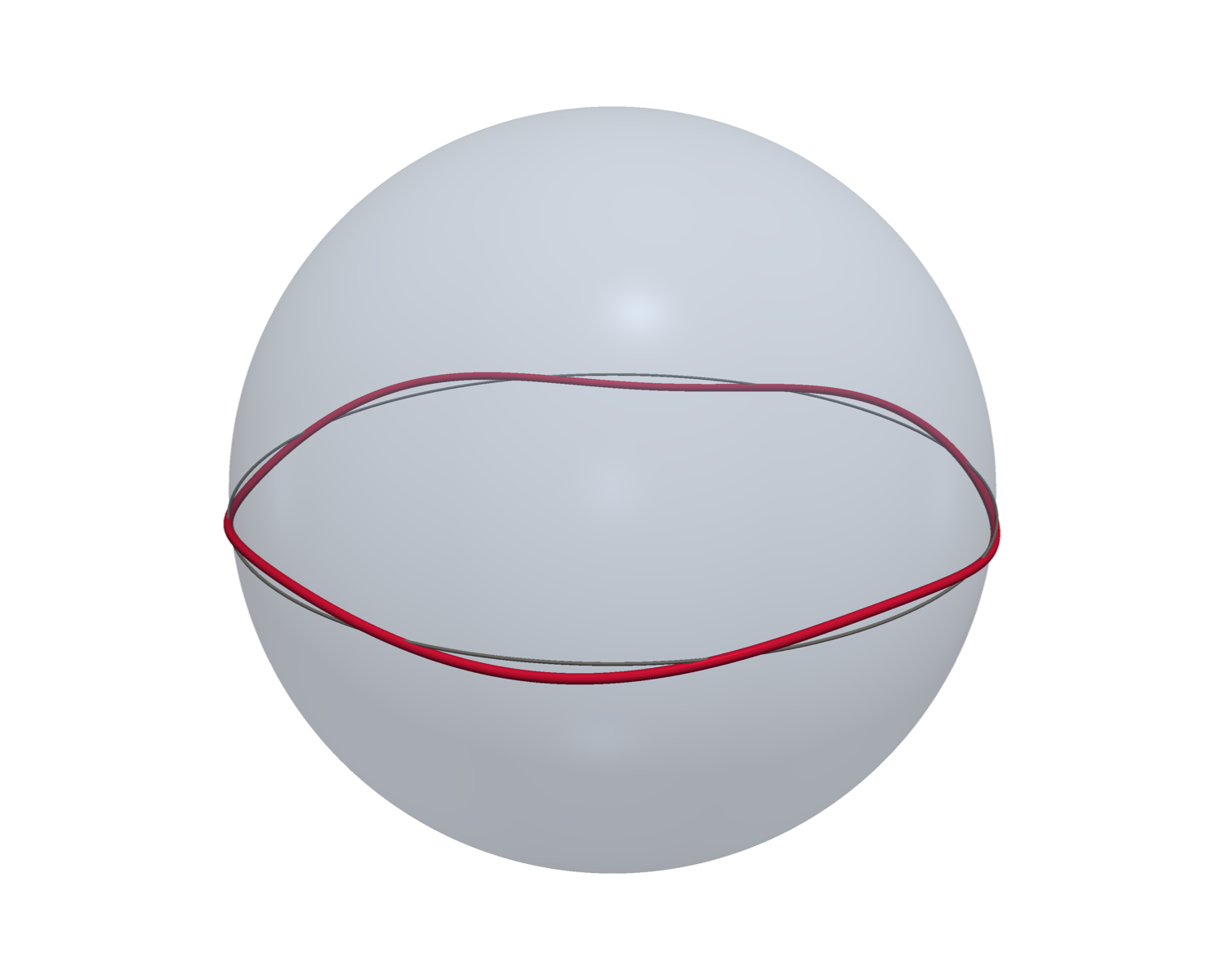}\caption{$k=3$, the boundary curve}\end{subfigure}
\caption{Numerical models of the surfaces $\Sigma_{k,\varepsilon}$ of Theorem~\ref{thm:B} for $k=2$ and $k=3$, made like Figure~\ref{fig:intro}
(Section~\ref{sec:num_chm}): the Costa--Hoffman--Meeks surface $M_k$ scaled by $\varepsilon^{-1}$, with a computed free-boundary collar near the
hole and the decaying tail of the correction farther out, cut off at $|x|=7$. In (a) and (d) the planar end is the horizontal
sheet, joined to the two catenoidal ends through openings around the axis; the hole is hidden behind the sheet, and (b)
and (e) show it. The
boundary curves (c) and (f) cross the equator at $2(k+1)$ points and reach latitude $\pm8.5^\circ$ and $\pm2.2^\circ$. Theorem~\ref{thm:B}
gives $\Sigma_{k,\varepsilon}$ only for small $\varepsilon$; for these values its existence is not proved. Interactive versions:
\viewer{chm2} and \viewer{chm3}.}
\label{fig:chm}
\end{figure}

\subsection{Symmetries}
Let $\beta=\pi/m$ and $\varrho=e^{i(m-1)\beta}$. The maps $\lambda(z,w)=(-z,\varrho w)$ and $\kappa(z,w)=(\bar z,\bar w)$ preserve the curve,
because $\varrho^m=(-1)^{m-1}$, and extend to $\overline M_k$. The first is holomorphic and the second anti-holomorphic; both fix $p_0$ and
$p_\infty$, $\lambda$ exchanges $p_+$ and $p_-$, and $\kappa$ fixes them. In the coordinate $t$ of Lemma~\ref{lem:chm_center}, $\lambda(t)=e^{i\beta}t$
and $\kappa(t)=\bar t$.

\begin{lemma}\label{lem:chm_symmetry}
$\x\circ\lambda=S\circ\x$ and $\x\circ\kappa=T\circ\x$, with $S(Z,x_3)=(e^{i\beta}Z,-x_3)$ and $T(Z,x_3)=(\bar Z,x_3)$. The group $G_k$
generated by $S$ and $T$ has order $4m=4(k+1)$, since $S^{2m}=T^2=1$ and $TST=S^{-1}$. With $\chi(\sigma)=\langle\sigma e_3,e_3\rangle$ one
has $\chi(S)=-1$, $\chi(T)=1$ and $\n(\sigma p)=\chi(\sigma)\,\sigma\n(p)$, where $\sigma\in G_k$ acts on $\overline M_k$ through $\lambda$ and $\kappa$.
A $G_k$-equivariant function, $u(\sigma p)=\chi(\sigma)u(p)$, contains near the origin, in the polar angle $\varphi$ of $Z$ or of $t$, only the
modes $\cos\bigl((2j+1)m\varphi\bigr)$, $j\geq0$, and the same holds on the planar end. On each catenoidal end only the cosines of
frequencies divisible by $m$ occur. For $k=1$, $G_1=G$, with $S=\sigma_1\tau$ and $T=\sigma_2$.
\end{lemma}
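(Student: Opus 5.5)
The plan is to copy the proof of Lemma~\ref{lem:symmetry}: compute the pullbacks $\lambda^*\Phi$ and $\kappa^*\Phi$, integrate from the common fixed point $p_0$, and read off the Euclidean maps. Then I derive the group structure and the normal relation, and finally the Fourier restrictions as in Lemma~\ref{lem:modes}.

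\emph{Pullbacks.} Under $\lambda$, $z\mapsto-z$ and $w\mapsto\varrho w$. Hence $dh=dz/(z^2-1)\mapsto-dh$ and $g=c_k/w\mapsto\varrho^{-1}g$. With $|\varrho|=1$ and $\varrho^{-1}=\bar\varrho$ this gives
\[
\lambda^*(\Phi_1+i\Phi_2)=\lambda^*(-g\,dh)=\bar\varrho\,g\,dh=-\bar\varrho(\Phi_1+i\Phi_2),
\]
\[
\lambda^*(\Phi_1-i\Phi_2)=\lambda^*(dh/g)=-\varrho\,dh/g=-\varrho(\Phi_1-i\Phi_2),
\]
and $\lambda^*\Phi_3=-\Phi_3$. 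Integrating from $p_0$ and using $Z=\frac12\bigl(\int(\Phi_1+i\Phi_2)+\overline{\int(\Phi_1-i\Phi_2)}\bigr)$ gives $Z\circ\lambda=-\bar\varrho Z$ and $x_3\circ\lambda=-x_3$. Since $-\bar\varrho=-e^{-i(m-1)\beta}=e^{i\pi-i(m-1)\beta}=e^{i\beta}$, we get $\x\circ\lambda=S\circ\x$.

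Under $\kappa$, every term is conjugated, because $c_k$ is real: $\kappa^*\Phi=\overline{\Phi}$. Therefore $Z\mapsto\bar Z$ and $x_3\mapsto x_3$, which is $T$.

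The path-dependence of these integrals is harmless: the periods vanish, and $\lambda$, $\kappa$ fix $p_0$. The claims $\lambda(t)=e^{i\beta}t$ and $\kappa(t)=\bar t$ I check directly from $z=-t^m$ and $w=-t^{m-1}(1-t^{2m})^{1/m}$. Indeed $(e^{i\beta}t)^m=-t^m$, and $(e^{i\beta}t)^{m-1}=\varrho t^{m-1}$ while $t^{2m}$ is unchanged, so the branch of the root is preserved. This is also consistent with the expansion in Lemma~\ref{lem:chm_center}.

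\emph{Group and normal.} $S$ is a rotoreflection by angle $\beta=\pi/m$, so $S^{2m}=1$ and $S^j\ne1$ for $0<j<2m$. $T$ is a reflection with $TST=S^{-1}$. Hence $G_k$ is dihedral of order $4m$. The character satisfies $\chi(S)=-1$ and $\chi(T)=1$.

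For the normal, $\n\circ\lambda$ and $\chi(S)S\n$ are both unit normals of $M_k$ at the same points, using that $\x$ is an embedding. They agree up to a global sign, and at $p_0$, where $\n(p_0)=e_3$ by Lemma~\ref{lem:chm_center}, both equal $e_3$ after the factor $\chi$. The same argument applies to $T$. Alternatively, compute $g\circ\lambda=\bar\varrho g$ and $g\circ\kappa=\bar g$ directly.

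\emph{Modes.} Near the origin, $S$ acts on the angle by $\varphi\mapsto\varphi+\pi/m$ with $\chi=-1$, and $T$ acts by $\varphi\mapsto-\varphi$ with $\chi=1$. Invariance under $T$ leaves only cosines. The condition $u_n\cos(n\varphi+n\pi/m)=-u_n\cos n\varphi$ for all $\varphi$ forces $e^{in\pi/m}=-1$, so $n/m$ is an odd integer, i.e.\ $n=(2j+1)m$. At the planar end $p_\infty$ the group acts on the horizontal projection in the same way, so the same conclusion holds.

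On a catenoidal end, $\lambda$ exchanges $p_\pm$, so the stabilizer is generated by $S^2$ (a rotation by $2\pi/m$, with $\chi=1$) and $T$. These leave cosines of frequencies divisible by $m$.

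\emph{Case $k=1$.} Here $m=2$ and $\beta=\pi/2$, so $S(x)=(-x_2,x_1,-x_3)=\sigma_1\tau$ and $T=\sigma_2$. This matches the relation $S=\sigma_1\tau$ from Section~\ref{sec:symmetries}, up to the scaling of $M_1$.

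\emph{Main obstacle.} The only delicate point is the bookkeeping of conjugations and branches in the pullback of $\Phi_1\pm i\Phi_2$, and the check that $\lambda$ and $\kappa$ extend to the normalization with the stated action on $t$. I would verify the sign $-\bar\varrho=e^{i\beta}$ against the explicit expansion $Z=\ell_kt+\dots$ of Lemma~\ref{lem:chm_center}.
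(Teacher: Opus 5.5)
Your proposal follows the paper's proof step by step: pull back the Weierstrass data under $\lambda$ and $\kappa$, integrate from the common fixed point $p_0$, fix the sign of the normal by evaluating at $p_0$, and read off the Fourier modes from the action on the angle and from the stabilizer $\langle S^2,T\rangle$ of $p_+$. One slip needs correcting: because of the factor $\frac i2$ in $\Phi_2$, the correct formula is $\kappa^*\Phi=(\overline{\Phi_1},-\overline{\Phi_2},\overline{\Phi_3})$, not $\overline\Phi$ (taken literally, $\overline\Phi$ would give $\x\circ\kappa=\x$); what does hold is $\kappa^*(g\,dh)=\overline{g\,dh}$ and $\kappa^*(dh/g)=\overline{dh/g}$, and with your formula for $Z$ this gives $Z\circ\kappa=\bar Z$, as you claim.
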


\begin{proof}
One has $\lambda^*g=\varrho^{-1}g$ and $\lambda^*dh=-dh$. Hence $\lambda^*(\Phi_1+i\Phi_2)=-\varrho^{-1}(\Phi_1+i\Phi_2)=e^{i\beta}(\Phi_1+i\Phi_2)$,
$\lambda^*(\Phi_1-i\Phi_2)=e^{-i\beta}(\Phi_1-i\Phi_2)$ and $\lambda^*\Phi_3=-\Phi_3$, that is, $\lambda^*\Phi=S\Phi$. Integrating from the fixed point $p_0$
gives $\x\circ\lambda=S\circ\x$. Since $c_k$ is real, $\kappa^*\Phi=(\overline{\Phi_1},-\overline{\Phi_2},\overline{\Phi_3})$, which gives $\x\circ\kappa=T\circ\x$. The
relations of $S$ and $T$ are checked directly. As in Lemma~\ref{lem:symmetry}, $\n\circ\lambda$ and $S\n$ are unit normals at the same
points, so they agree up to a global sign; at $p_0$ they are $e_3$ and $-e_3$, so $\n\circ\lambda=-S\n=\chi(S)S\n$. In the same way
$\n\circ\kappa=T\n$. Near the origin, $S$ acts on the angle by $\varphi\mapsto\varphi+\beta$ with $\chi=-1$, and $T$ by $\varphi\mapsto-\varphi$ with $\chi=1$. So
$u(r,\varphi+\pi/m)=-u(r,\varphi)$ and $u(r,-\varphi)=u(r,\varphi)$, which leaves exactly the cosines $\cos(l\varphi)$ with $l/m$ odd. The planar end is
fixed by $G_k$ and is a graph over the exterior of a disk in $\{x_3=0\}$, where $G_k$ acts in the same way. The stabilizer of
$p_+$ is generated by $S^2$, the rotation by $2\pi/m$ about the $x_3$-axis with $\chi=1$, and by $T$; this leaves the cosines of
frequencies divisible by $m$. Finally $S$ exchanges the two catenoidal ends.
\end{proof}

\begin{lemma}\label{lem:chm_kernel}
The bounded Jacobi fields of $M_k$ form a space of dimension $4$, spanned by $\langle\n,e_i\rangle$, $i=1,2,3$, and
$\langle\n,e_3\times\x\rangle$. No nonzero one is $G_k$-equivariant.
\end{lemma}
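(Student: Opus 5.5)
The plan is to follow the proof of Lemma~\ref{lem:no_kernel} step by step, with two inputs in place of Theorem~\ref{thm:nayatani}. For the dimension, we use Nayatani~\cite{Nayatani1993} for $k\le 5$ and Morabito~\cite{Morabito2009} for all $k$. Both prove that $M_k$ is nondegenerate in the sense that its bounded Jacobi fields form a space of dimension $4$. It then remains to check that the four listed fields are bounded Jacobi fields and are linearly independent, and to compute how they transform under $G_k$.

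\emph{Boundedness and the Jacobi equation.} The fields $\langle\n,a\rangle$ and $\rho=\langle\n,e_3\times\x\rangle$ come from translations and from the rotation about the $x_3$-axis, so they are Jacobi fields. The boundedness of $\rho$ on each end is the same computation as in Lemma~\ref{lem:no_kernel}. By Schoen~\cite{Schoen1983}, every end is a graph $x_3=a_j\log r+b_j+O(r^{-1})$ with gradient $a_jx'/r^2+O(r^{-2})$. Hence the horizontal part of $\n$ is radial up to $O(r^{-2})$, and $\rho=O(r^{-1})$ there.

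\emph{Independence.} The Gauss map $g=c_k/w$ extends to a meromorphic map on $\overline M_k$ of degree $m\geq2$, so it is nonconstant. Consequently no nontrivial combination $\langle\n,a\rangle$ vanishes identically. Suppose $\rho=\langle\n,a\rangle$ for some $a$. We apply $T$, for which $\chi(T)=1$ and $\det T=-1$. The transformation rule of Lemma~\ref{lem:no_kernel} gives $\rho\circ\kappa=\det(T)\rho=-\rho$ and $\langle\n\circ\kappa,a\rangle=\langle\n,Ta\rangle$, hence $Ta=-a$. Next we apply $S^2$, the rotation by $2\pi/m$ with $\chi=\det=1$; this gives $S^2a=a$. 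The first condition puts $a$ in $\R e_2$. The second, since $S^2$ is a nontrivial rotation about $e_3$, forces $a\in\R e_3$. Together these give $a=0$. If $\rho\equiv0$, then $e_3\times\x$ is tangent to $M_k$ and $M_k$ would be rotationally symmetric, which is impossible because it has genus $k\ge1$ (or because $x_3$ is not a radial function near $p_0$ by Lemma~\ref{lem:chm_center}). So the four fields span the $4$-dimensional space.

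\emph{No equivariant field.} Let $u=\langle\n,a\rangle+c\rho$ be $G_k$-equivariant. We first use $\sigma=T$. The rules $\langle\n\circ\kappa,a\rangle=\langle\n,Ta\rangle$ and $\rho\circ\kappa=-\rho$ turn equivariance into $\langle\n,Ta-a\rangle-2c\rho=0$. Independence then gives $c=0$ and $Ta=a$. For every $\sigma\in G_k$, equivariance gives $\chi(\sigma)\langle\n,\sigma^{-1}a\rangle=\chi(\sigma)\langle\n,a\rangle$, so $a$ is fixed by $S$. But $S(Z,x_3)=(e^{i\pi/m}Z,-x_3)$ has no nonzero fixed vector, since $e^{i\pi/m}\ne1$. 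Hence $a=0$ and $u=0$.

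The only real obstacle is the nondegeneracy input. Everything else is linear algebra copied from Lemma~\ref{lem:no_kernel}. Two points need care. First, we must confirm that Nayatani and Morabito cover all $k$ and state their result for bounded Jacobi fields, which is equivalent by Fischer-Colbrie~\cite{FischerColbrie1985} to the kernel of the compactified operator. Second, the normalization $\n\circ\lambda=\chi(S)S\n$ from Lemma~\ref{lem:chm_symmetry} must be used consistently in the sign computations.
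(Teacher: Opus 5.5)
Your proof is correct and follows the paper's argument essentially step by step. The only variation is that you pair $T$ with the rotation $S^2$ to rule out $\rho=\langle\n,a\rangle$, where the paper pairs $T$ with the reflection $S^2T$; the two are equivalent.

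There are two harmless slips. The Gauss map $c_k/w$ has degree $m+1=k+2$, not $m$, but you only use that it is nonconstant, hence open or onto. Nayatani's computation covers $k\le 37$, not $k\le 5$, which does not matter since you also cite Morabito for all $k$.
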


\begin{proof}
The dimension is $4$ by Nayatani for $k\leq37$~\cite{Nayatani1992,Nayatani1993} and by Morabito for all $k$~\cite{Morabito2009}. The four fields are
bounded Jacobi fields as in Lemma~\ref{lem:no_kernel}, since the ends of $M_k$ are regular and parallel. The translation fields are
independent, because the Gauss map extends to a map of degree $k+2$ from $\overline M_k$ onto $S^2$. If $\rho=\langle\n,e_3\times\x\rangle$ were equal
to $\langle\n,a\rangle$, the twisted action of $T$ and of $S^2T$, reflections in two different vertical planes (with $\chi=1$ and determinant
$-1$), would give $Ta=-a$ and $S^2Ta=-a$. Then $a$ would be orthogonal to both planes, so $a=0$, and $M_k$ would be rotationally
symmetric, which it is not. So the four fields span the bounded Jacobi fields. As in Lemma~\ref{lem:no_kernel}, the twisted action
$u\mapsto\chi(\sigma)\,u\circ\sigma^{-1}$ maps $\langle\n,a\rangle$ to $\langle\n,\sigma a\rangle$ and $\rho$ to $\chi(\sigma)\det(\sigma)\rho$. For $u=\langle\n,a\rangle+c\rho$
equivariant, the action of $T$ gives $c=0$, and then $\sigma a=a$ for all $\sigma\in G_k$. But $S$ has the eigenvalues $e^{\pm i\beta}$ and $-1$, none of them
equal to $1$, so $a=0$.
\end{proof}

\subsection{Proof of Theorem~\ref{thm:B}}
Sections~\ref{sec:perturb}--\ref{sec:nonlinear} use the Costa surface only through Lemmas~\ref{lem:saddle}--\ref{lem:no_kernel}, the flat metric of the
torus and the structure of the ends. We list the changes; everything else is word for word the same, with constants
depending on $k$.

\emph{The background metric.} The surface $\overline M_k$ carries no flat metric, but flatness is used only near the hole. Choose
a smooth metric in the conformal class of $\overline M_k$ equal to $|dt|^2$ on a disk $\{|t|<t_0\}$ and average it over $G_k$. The average is
conformal, because the elements of $G_k$ act holomorphically or anti-holomorphically, and it is still $|dt|^2$ on that disk, because
$G_k$ acts there by the isometries $t\mapsto e^{i\beta}t$ and $t\mapsto\bar t$. We use this metric $\tilde g$ in place of the flat metric of the torus.
Then $J=\Delta_{\tilde g}+\tilde V$ with $\tilde V=e^{2\phi}|A|^2$ smooth on $\overline M_k$~\cite{FischerColbrie1985}, and Proposition~\ref{prop:J_iso} holds by
Lemma~\ref{lem:chm_kernel}.

\emph{Section~\ref{sec:perturb}.} The sets $U$, $D_\varepsilon$, $\Omega_\varepsilon$ and the field $W$ are defined in the same way: $g=\infty$ at $p_\pm$ and $g=0$ at
$p_\infty$, so $\varsigma=+1$ on the slabs of the catenoidal ends and $\varsigma=-1$ on that of the planar end, and $W(\sigma x)=\chi(\sigma)\sigma W(x)$ because
$S$ exchanges the catenoidal slabs. By Lemma~\ref{lem:chm_center}, $\theta=1+O(|x|^{2k})$ near the origin. Lemma~\ref{lem:error} becomes
\[
\Theta_\varepsilon(0)=-k\,a_k\,\varepsilon^k\cos\bigl((k+1)\varphi\bigr)+O(\varepsilon^{3k}),
\]
since $\langle\n,x\rangle=(f_k-x'\cdot\nabla f_k)/\sqrt{1+|\nabla f_k|^2}$ and the leading term of $f_k$ is homogeneous of degree $k+1$. Lemma~\ref{lem:linearization}
holds as stated; here $W^T=O(|x|^k)$ and $|A|=O(|x|^{k-1})$ near the origin give even $|R_\varepsilon u|\leq C\varepsilon^{2k-1}|u|$.

\emph{Section~\ref{sec:linear}.} The weight is $\vartheta=|\x|$ near $p_0$, and the rescaled variable is $y=\ell_kt/\varepsilon$ in place of $\pi w/\varepsilon$; by
Lemma~\ref{lem:chm_center}, $|\x|=\ell_k|t|(1+O(|t|^{2k}))$ and $e^\phi=\ell_k(1+O(|t|^{2k}))$. In the three limiting problems of Lemma~\ref{lem:limits} the group
acts by $y\mapsto e^{i\beta}y$ with $\chi=-1$ and by $y\mapsto\bar y$ with $\chi=1$, so their solutions have only the modes $(2j+1)m\geq2$. The proof of
Lemma~\ref{lem:limits} applies: the growth $O(r)$ excludes $r^{(2j+1)m}$, the Robin condition gives $\bigl((2j+1)m+1\bigr)b=0$, and boundedness near
the origin excludes $r^{-(2j+1)m}$ in the punctured plane. Proposition~\ref{prop:uniform} follows.

\emph{Section~\ref{sec:nonlinear}.} Lemma~\ref{lem:ends} applies at all three ends. At $p_+$ take $\zeta$ with $z-1=\zeta^m$; then
$w=\zeta F(\zeta^m)$ with $F$ holomorphic and $F(0)=2^{1/m}$, so $g\,dh=c_km\,\zeta^{-2}G(\zeta^m)\,d\zeta$ with $G$ holomorphic, $g^{-1}dh$ is holomorphic,
and $dh$ has a simple pole with the real residue $m/2$. A term $\zeta^{-1}$ in $g\,dh$ would need $mj=1$, so the horizontal forms have no
residues, the inversion $y=x'/|x'|^2$ is a smooth coordinate at $p_+$, and $x_3=a\log|x'|+b+h(y)$ with $h$ smooth; the same holds at
$p_-$. At $p_\infty$ take $z=\zeta^{-m}$; then $g=c_k\zeta^{m+1}(1-\zeta^{2m})^{-1/m}$, $dh=-m\zeta^{m-1}(1-\zeta^{2m})^{-1}d\zeta$ and
$g^{-1}dh=-\frac m{c_k}\zeta^{-2}(1-\zeta^{2m})^{1/m-1}d\zeta$, which again has no residue, and $dh$ is holomorphic: a planar end. Near the hole the
rescaled pieces of $M_k$ are graphs with $C^{3,\alpha}$ norm $O(\vartheta^k)$, which is only better than in Lemma~\ref{lem:nonlinear}.

\emph{The fixed point.} Since $\|\Theta_\varepsilon(0)\|_Z\leq C\varepsilon^k$, the contraction argument in the proof of Theorem~\ref{thm:A} gives a $G_k$-equivariant
solution with $\|u\|_X\leq C\varepsilon^k$. The arguments for the free boundary, the exteriority, the embeddedness and the ends are unchanged, with
Lemma~\ref{lem:chm_symmetry} in place of Lemma~\ref{lem:modes} at the planar end.
The parameter domain is $\overline M_k$ minus a disk and three points, so $\Sigma_{k,\varepsilon}$ has genus $k$, one boundary circle and three ends, and it is
unstable by~\cite[Theorem~1.1]{MazetMendes2022}.

\emph{The index.} The Costa--Hoffman--Meeks surface $M_k$ has index $2k+3$~\cite{Nayatani1992,Nayatani1993,Morabito2009}. The proof of
Proposition~\ref{prop:index} applies with a $(2k+3)$-dimensional negative space and gives $\operatorname{ind}\Sigma_{k,\varepsilon}\geq2k+3$. This finishes the proof
of Theorem~\ref{thm:B}. \qed

\begin{remark}[The leading correction]
The model problem of Section~\ref{sec:num_collar} has the analogue $\Delta u=0$ outside the hole and
$-\partial_ru+u/\varepsilon=k\,a_k\,\varepsilon^k\cos\bigl((k+1)\varphi\bigr)$ on $|x|=\varepsilon$. Its decaying solution is
\[
u=\frac{k\,a_k}{k+2}\,\frac{\varepsilon^{2k+2}}{|x|^{k+1}}\cos\bigl((k+1)\varphi\bigr),
\]
of size $\frac{ka_k}{k+2}\varepsilon^{k+1}$ at the boundary. For $k=1$, with $a_1=\kappa/2$ after the scaling, this is~\eqref{eq:model}. The correction becomes
smaller with the genus, because the central point of $M_k$ is a saddle of order $k+1$.
\end{remark}

\section{Remarks}\label{sec:remarks}

\begin{proposition}[The index]\label{prop:index}
For all sufficiently small $\varepsilon$, the Morse index of $\Sigma_\varepsilon$ is at least $5$.
\end{proposition}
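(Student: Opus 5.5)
The plan is to transplant the known $5$-dimensional negative space of the Costa surface to $\Sigma_\varepsilon$, using cut-offs near the hole and at the ends. The Costa surface has Morse index $5$ (Nayatani~\cite{Nayatani1993}). Equivalently, the quadratic form $Q(u)=\int_{\overline T}\bigl(|\nabla u|^2_{\tilde g}-\tilde V u^2\bigr)\,dA_{\tilde g}$ has exactly five negative eigenvalues on $H^1(\overline T)$; this uses conformal invariance of the Dirichlet energy and the fact that $|A|^2dA_M=\tilde V\,dA_{\tilde g}$. So there is a $5$-dimensional subspace $E\subset C^\infty(\overline T)$ on which $Q\le-2\delta\|u\|^2_{L^2(\tilde g)}$ for some $\delta>0$. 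Since the ends are ordinary points of $\overline T$, no cut-off at infinity is needed. The finite-energy Jacobi form of $\Sigma_\varepsilon$ is conformally invariant in the same way: its interior part is $\int(|\nabla u|^2-|A_u|^2u^2)\,dA$, and for a free boundary on the unit sphere the boundary term is $-\int_{\partial\Sigma}u^2$. After scaling by $\varepsilon$ this becomes $-\varepsilon^{-1}\int_{\partial\Omega_\varepsilon}u^2\,ds_M$.

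First I remove the hole. I use the logarithmic cut-off $\psi_\varepsilon$ on $\overline T$: it equals $0$ on $\{\vartheta\le\sqrt\varepsilon\}$, equals $1$ on $\{\vartheta\ge\varepsilon^{1/4}\}$, and is $\log$-linear in $\vartheta$ in between. Then $\int|\nabla\psi_\varepsilon|^2\,dA_{\tilde g}\le C/|\log\varepsilon|\to0$, using the conformal picture in $w$ with $\vartheta\approx\pi|w|$. For $u\in E$ put $u_\varepsilon=\psi_\varepsilon u$. This vanishes near $\partial\Omega_\varepsilon$, so the boundary term drops out entirely, which is convenient since it has a helpful sign anyway. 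Because $u$ is bounded, $Q(\psi_\varepsilon u)\le Q(u)+C\|u\|^2_{C^1}(|\log\varepsilon|^{-1}+\varepsilon^{1/2})$, and on the finite-dimensional space $E$ all norms are equivalent. Hence $Q(\psi_\varepsilon u)\le-\delta\|u\|^2$ for small $\varepsilon$, and $u\mapsto\psi_\varepsilon u$ stays injective on $E$.

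Second I pass from $M$ to $\Sigma_\varepsilon$. Pull the Jacobi form $Q_\varepsilon$ of $\varepsilon\Sigma_\varepsilon=\Sigma_\varepsilon(u_\varepsilon)$ back to $\Omega_\varepsilon$ via the parametrization $p\mapsto\Phi^{u_\varepsilon/\theta}(\x(p))$ and write it with respect to $\tilde g$. The term $\int|\nabla v|^2$ is conformally invariant, so only the potential changes, to $\tilde V_\varepsilon=e^{2\phi_\varepsilon}|A_\varepsilon|^2$. Theorem~\ref{thm:A}(iii) and $\|u_\varepsilon\|_X\le C\varepsilon$ give $\tilde V_\varepsilon\to\tilde V$ uniformly on $\{\vartheta\ge\sqrt\varepsilon\}$, the support of the test functions. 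On compact parts this follows from the smooth convergence. At the ends it follows from the smoothness in $y$ established in the proof of Lemma~\ref{lem:nonlinear}, where $\tilde V_\varepsilon$ is a smooth function of $(y,\nabla(h+w),D^2(h+w))$ with $w=O(\varepsilon)$ in $C^{2,\alpha}$. Near the hole the weighted bound $\vartheta|\nabla^2u_\varepsilon|\le C\varepsilon$ gives $|A_\varepsilon|\le C+C\varepsilon/\vartheta\le C$ for $\vartheta\ge\sqrt\varepsilon$, and these regions carry $\tilde g$-area only $O(\varepsilon^{1/2})$. Therefore $|Q_\varepsilon(\psi_\varepsilon u)-Q(\psi_\varepsilon u)|=o(1)\|u\|^2$. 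So $Q_\varepsilon$ is negative definite on the $5$-dimensional space $\psi_\varepsilon E$ of compactly supported test functions on $\Sigma_\varepsilon$, and the index is at least $5$.

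The main obstacle is the uniform control of the potential near the hole, where $\Sigma_\varepsilon(u_\varepsilon)$ is not a small perturbation of $M$ in unweighted $C^2$. I would handle it by restricting the supports of the test functions to $\vartheta\ge\sqrt\varepsilon$ as above, so that only the weighted estimate of Proposition~\ref{prop:uniform} is needed. A secondary point is to confirm that the index of a noncompact exterior FBMS is defined through compactly supported variations, or through finite-energy ones, which is the convention of~\cite{MazetMendes2022}. Either way, $\psi_\varepsilon E$ is admissible, after a further harmless cut-off at large radius if required.
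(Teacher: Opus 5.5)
Your argument is correct in outline, but it takes a more laborious route than the paper's.

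\emph{The two routes.} The paper starts from a five-dimensional negative space $V$ of \emph{compactly supported} functions on $M$ (Fischer-Colbrie: the index is attained on compact domains). It multiplies by a logarithmic cut-off $\eta_\delta$ at a scale $\delta$ \emph{fixed independently of} $\varepsilon$. It then uses only that $\varepsilon\Sigma_\varepsilon\to M$ smoothly on the fixed compact set $K\subset M\setminus B_{\delta^2}$ (Theorem~\ref{thm:A}(iii)). Nothing about $\Sigma_\varepsilon$ near the hole or at the ends enters.

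You start instead from the negative eigenspace of $-J$ on $\overline T$ and let the cut-off shrink with $\varepsilon$. This forces you to control the potential of $\Sigma_\varepsilon(u_\varepsilon)$ on $\{\vartheta\ge\sqrt\varepsilon\}$ through the weighted norm, and at the ends through Lemma~\ref{lem:ends}. That can be done, since $|A_\varepsilon-A|\le C\varepsilon/\vartheta\le C\sqrt\varepsilon$ there. But the main obstacle you identify is self-inflicted. Negative definiteness on a finite-dimensional space is an open condition, so one fixed cut-off already leaves a margin, and then only convergence on a fixed compact set is needed. Working on all of $\overline T$ with weighted norms is the natural framework for an \emph{upper} bound on the index; for the lower bound it buys nothing.

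\emph{Three statements that need correcting.} None of them breaks the argument.

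First, the sign of the boundary term. For the exterior problem it is $+\int_{\partial\Sigma}u^2\,ds$, as in the paper's $Q$. This matches the Robin operator $\mathcal B_\varepsilon u=\partial_\nu u+u/\varepsilon$ of Lemma~\ref{lem:linearization} and the positive boundary term in Step~3 of Proposition~\ref{prop:uniform}. So it works \emph{against} negativity. For bounded $u$ with $u(\omega)\neq0$, the scaled term $\varepsilon^{-1}\int_{\partial\Omega_\varepsilon}u^2\,ds_M\approx2\pi u(\omega)^2$ does not tend to zero. The cut-off at the hole is therefore necessary, not merely convenient; you are saved only because your test functions vanish there.

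Second, the Dirichlet term. The parametrization $p\mapsto\Phi^{u_\varepsilon/\theta}(\x(p))$ is not conformal for $\tilde g$, so the Dirichlet term does change, and there is no conformal factor $e^{2\phi_\varepsilon}$. What you need is that the pulled-back metric lies between $(1-C\varepsilon)g_M$ and $(1+C\varepsilon)g_M$ on the support. This follows from $|u_\varepsilon|/\vartheta+|\nabla u_\varepsilon|\le C\varepsilon$ and the degree-zero homogeneity of $W$ near the origin, and from Lemma~\ref{lem:ends}(ii) at the ends. The Dirichlet terms then differ by $O(\varepsilon)\int|\nabla(\psi_\varepsilon u)|^2$, which is $o(1)\|u\|^2$ on $E$.

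Third, compact support. $\psi_\varepsilon E$ is not compactly supported on $\Sigma_\varepsilon$, because eigenfunctions of $-J$ need not vanish at the end points. The paper defines the index on compactly supported variations, so a cut-off at the ends is required, not optional. It must be logarithmic, for example $\log$-linear in $\log|x|$ between $R$ and $R^2$. A cut-off linear in $|x|$ between $R$ and $2R$ costs Dirichlet energy about $3\pi$ per sheet, which does not go to zero. Alternatively, start from a compactly supported negative space, as the paper does.
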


\begin{proof}
Here the index is the number of negative directions of the second variation
$Q(u)=\int_\Sigma(|\nabla u|^2-|A|^2u^2)\,dA+\int_{\partial\Sigma}u^2\,ds$ on compactly supported functions. By
Nayatani~\cite{Nayatani1993} the Costa surface has index $5$. Since the index of a complete minimal surface is attained on compact
domains~\cite{FischerColbrie1985}, there is a five-dimensional space $V$ of smooth compactly supported functions on $M$ on which
$Q_M(u)=\int_M(|\nabla u|^2-|A|^2u^2)\,dA$ is negative definite. Let $\eta_\delta$ be the logarithmic cut-off: $\eta_\delta=0$ on $\{|x|\leq\delta^2\}$,
$\eta_\delta=\log(|x|/\delta^2)/\log(1/\delta)$ on $\{\delta^2\leq|x|\leq\delta\}$ and $\eta_\delta=1$ outside $B_\delta$. Near the origin $M$ is a graph over
the plane, so $\int_M|\nabla\eta_\delta|^2\,dA\leq C/\log(1/\delta)\to0$: a point has zero capacity in dimension two. Since $V$ is finite-dimensional,
$Q_M(\eta_\delta u)\to Q_M(u)$ uniformly for $u$ in the unit sphere of $V$, so for a fixed small $\delta$ the form $Q_M$ is negative definite on
the five-dimensional space $\eta_\delta V$. Its functions are supported in a fixed compact set $K\subset M\setminus B_{\delta^2}$. By
Theorem~\ref{thm:A}(iii), $\varepsilon\Sigma_\varepsilon\to M$ smoothly on compact subsets of $\R^3\setminus\{0\}$. So for small $\varepsilon$ a neighbourhood of $K$
in $\varepsilon\Sigma_\varepsilon$ is a normal graph over a neighbourhood of $K$ in $M$. Transplanting $\eta_\delta V$ along this graph gives a five-dimensional
space on which the second variation of $\varepsilon\Sigma_\varepsilon$ converges uniformly to $Q_M$, hence is negative definite. These functions vanish
near the boundary of $\varepsilon\Sigma_\varepsilon$, which lies on $\partial B_\varepsilon$ inside $B_{\delta^2}$, so the boundary term does not enter. Scaling does not
change the index.
\end{proof}

\begin{remark}[Jacobi fields of $\Sigma_\varepsilon$]
The rotations about the $x_3$-axis preserve the unit ball, so $\langle\n,e_3\times\x\rangle$ is a bounded Jacobi field of $\Sigma_\varepsilon$ which
satisfies the Robin condition of the free boundary. It is not zero, because $\Sigma_\varepsilon$ is not rotationally symmetric. So
$\Sigma_\varepsilon$ is degenerate in the full, non-symmetric class, with nullity at least one. By Proposition~\ref{prop:index} its
index is at least $5$. The spectrum of Section~\ref{sec:num_spectrum} suggests that one negative direction is $G$-equivariant;
this is not proved.
\end{remark}

\begin{problem}
What is the exact index of $\Sigma_\varepsilon$? Is $\Sigma_\varepsilon$ nondegenerate among $G$-invariant surfaces? Are there connected embedded exterior
FBMS with more than one planar end?
\end{problem}

\section{Numerical experiments}\label{sec:numerics}

None of the proofs above uses a computation. This section describes the experiments that check the ingredients of
the proof and the computation behind the pictures. The scripts are listed in Section~\ref{sec:num_scripts}, and the
values quoted below are those they print; Tables~\ref{tab:defect}--\ref{tab:collar} are copied from that output.

\subsection{Two implementations of the Costa surface}\label{sec:num_impl}
\emph{(a) Jacobi elliptic functions.} For the square lattice $\Z+i\Z$ one has $e_2=0$, $e_3=-e_1$ and
\[
\wp(z)=-e_1+\frac{2e_1}{\operatorname{sn}^2(\sqrt{2e_1}\,z)},\qquad
\wp'(z)=-2(2e_1)^{3/2}\,\frac{\operatorname{cn}\operatorname{dn}}{\operatorname{sn}^3}(\sqrt{2e_1}\,z),
\]
with the parameter $m=\frac12$ and $e_1=2K(\frac12)^2=6.875185818\ldots$, where $K(\frac12)$ is the complete elliptic integral of the
first kind with parameter $\frac12$, so that the real half-period is $\frac12$. The functions of a complex argument are obtained from those
of real arguments with parameters $m$ and $1-m$ by the addition formulas. The position $\x(z)=\Ree\int_\omega^z\Phi$ is
integrated by the trapezoidal rule along rays from $\omega$ to the boundary of the fundamental square: $1441$ rays with
$3000$ points each for the checks, and $900$ rays with $1600$ points for the pictures.

\emph{(b) Theta series.} An independent implementation, written by an auditing agent, computes $\wp$ and $\wp'$ from the
Jacobi theta functions with nome $q=e^{-\pi}$ (twelve terms) and integrates $\Phi$ by Gauss--Legendre quadrature on straight
segments from $\omega$. It is used for the Jacobi operator in Section~\ref{sec:num_spectrum}.

The two implementations give the same $e_1$ to $9\cdot10^{-16}$. At thirty random points of the fundamental square
their positions $\x(z)$ differ by at most $3\cdot10^{-9}$, which is the error of the trapezoidal rule, and their unit
normals by at most $2\cdot10^{-15}$.

\subsection{The period condition and the constants}\label{sec:num_period}
In $30$-digit arithmetic, the horizontal period condition $\oint g\,dh=\overline{\oint dh/g}$ on the cycle $\Imm z=\frac14$ gives
$\mathfrak a^2=1187.97893710448$, which agrees with $8\pi e_1^2$ in all fifteen printed digits. Hence
\[
\mathfrak a=2\sqrt{2\pi}\,e_1=34.46707033,\qquad \kappa=\frac{\mathfrak a}{2\pi^2}=1.746122181,\qquad \frac\kappa2=0.873061 .
\]

\subsection{Symmetries}\label{sec:num_symmetry}
At forty random points of the fundamental square, the identities $\x\circ\kappa_\sigma=\sigma\circ\x$ of
Lemma~\ref{lem:symmetry} hold with errors at most $1.5\cdot10^{-14}$ for $\sigma_2$, $3.0\cdot10^{-15}$ for $\sigma_1$ and
$4.7\cdot10^{-11}$ for $\tau$ and the rotoreflection $S$; the larger error comes from the quadrature along the longer
paths. The signs are $\chi(\sigma_2)=+1$ and $\chi(\tau)=-1$, and $\n(\omega)=e_3$ to $6\cdot10^{-9}$.

\subsection{The ball meets the surface in a disk}\label{sec:num_disk}
Section~\ref{sec:perturb} uses that $\x^{-1}(B_\rho)$ is a disk around $\omega$ for small $\rho$; this follows from
Lemma~\ref{lem:saddle} and needs no computation. Numerically the property holds for every $\rho<\rho_*=1.8829$. Along each of the $1441$ rays from $\omega$ to the boundary of the fundamental square,
$|\x|$ increases strictly until it reaches a value at least $\rho_*$, and on the rest of every ray $|\x|\geq\rho_*$. So for
$\rho<\rho_*$ the set $\{|\x|<\rho\}$ is star-shaped in the coordinate $z$. It is a disk, and $|\x|$ has no critical point in it
other than $\omega$. The rays are sampled at $3000$ points each and stop at $0.999$ of the distance to the boundary of the
square, so this is a numerical observation and not a rigorous bound: a certified threshold would need estimates between the
rays and in the thin strip near the boundary. The level curves are shown in Figure~\ref{fig:torus}. The proof needs this only for small $\rho_0$, but the
pictures with $\varepsilon=0.45$ rely on the larger range.

\subsection{The boundary error}\label{sec:num_defect}

\begin{figure}[ht]
\centering
\includegraphics[width=\textwidth]{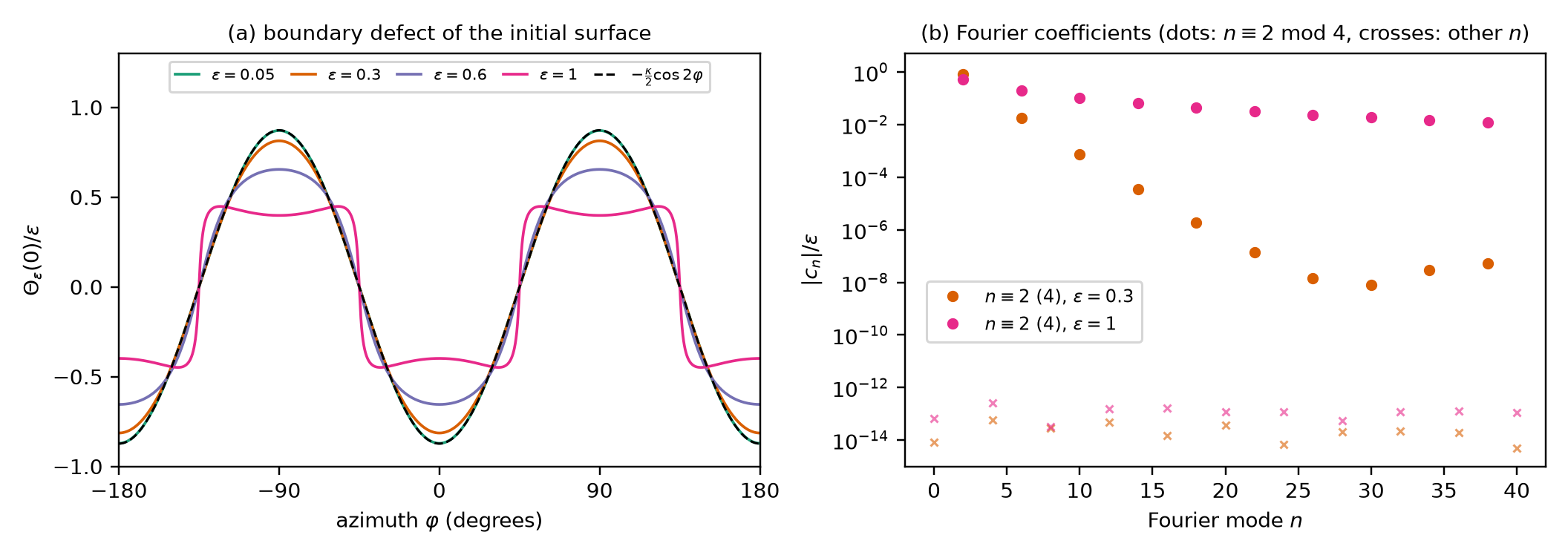}
\caption{The boundary error $\Theta_\varepsilon(0)=\langle\n,x/|x|\rangle$ of the initial surface $M\setminus B_\varepsilon$ on the circle
$|x|=\varepsilon$. (a) Divided by $\varepsilon$, against the azimuth, compared with the leading term $-\frac\kappa2\cos2\varphi$ of
Lemma~\ref{lem:error}. For $\varepsilon=0.05$ the two curves coincide to plotting accuracy; for $\varepsilon=1$ the expansion is no longer
useful. (b) The Fourier coefficients. Only the modes $n\equiv2\pmod4$ occur (dots); all others (crosses) are at the level of
rounding errors, as Lemma~\ref{lem:modes} predicts for the $G$-equivariant function $\Theta_\varepsilon(0)$. For $\varepsilon=0.3$ the
coefficients decay geometrically until they reach the interpolation error, about $10^{-8}$.}
\label{fig:defect}
\end{figure}

For each $\varepsilon$ we locate on every ray the point with $|\x|=\varepsilon$ by linear interpolation, evaluate the exact normal
there and compute $\Theta_\varepsilon(0)=\langle\n,\x/|\x|\rangle$ and its Fourier coefficients $c_n$ in the azimuth $\varphi$ of
$\x$. Table~\ref{tab:defect} and Figure~\ref{fig:defect} show the result.

\begin{table}[ht]
\centering
\begin{tabular}{cccccc}
\toprule
$\varepsilon$ & $\max|\Theta_\varepsilon(0)|/\varepsilon$ & $|c_2|/\varepsilon$ & $|c_6|/\varepsilon$ & $|c_{10}|/\varepsilon$ & $\max_{n\not\equiv2\,(4)}|c_n|/\varepsilon$\\
\midrule
$0.05$ & $0.8714$ & $0.8718$ & $4.2\cdot10^{-4}$ & $9.8\cdot10^{-7}$ & $3\cdot10^{-13}$\\
$0.30$ & $0.8139$ & $0.8314$ & $1.8\cdot10^{-2}$ & $7.3\cdot10^{-4}$ & $6\cdot10^{-14}$\\
$0.60$ & $0.6548$ & $0.7289$ & $8.8\cdot10^{-2}$ & $1.7\cdot10^{-2}$ & $1\cdot10^{-13}$\\
$1.00$ & $0.4485$ & $0.5293$ & $1.96\cdot10^{-1}$ & $1.04\cdot10^{-1}$ & $3\cdot10^{-13}$\\
\bottomrule
\end{tabular}
\caption{The boundary error of the initial surface. The leading coefficient tends to $\kappa/2=0.8731$, as in
Lemma~\ref{lem:error}, and the coefficient of $\cos6\varphi$ is relatively of order $\varepsilon^2$.}
\label{tab:defect}
\end{table}

At $\varepsilon=0.05$ the maximum of $|\Theta_\varepsilon(0)|/\varepsilon$ is $0.8714$, against $\kappa/2=0.8731$. The relative difference
$0.2\%$ is of the size $\varepsilon^2$ of the next term in Lemma~\ref{lem:error}. The relative size of $c_6$ grows from
$4.8\cdot10^{-4}$ to $2.2\cdot10^{-2}$ between $\varepsilon=0.05$ and $0.3$, a factor $45$ for a factor $36$ in $\varepsilon^2$. The angle
between the surface and the sphere differs from a right angle by $\arcsin\max|\Theta_\varepsilon(0)|$. This is $2.5^\circ$ for
$\varepsilon=0.05$ and $19.5^\circ$ for $\varepsilon=0.45$.

\subsection{The Jacobi operator on the torus}\label{sec:num_spectrum}

\begin{figure}[ht]
\centering
\includegraphics[width=\textwidth]{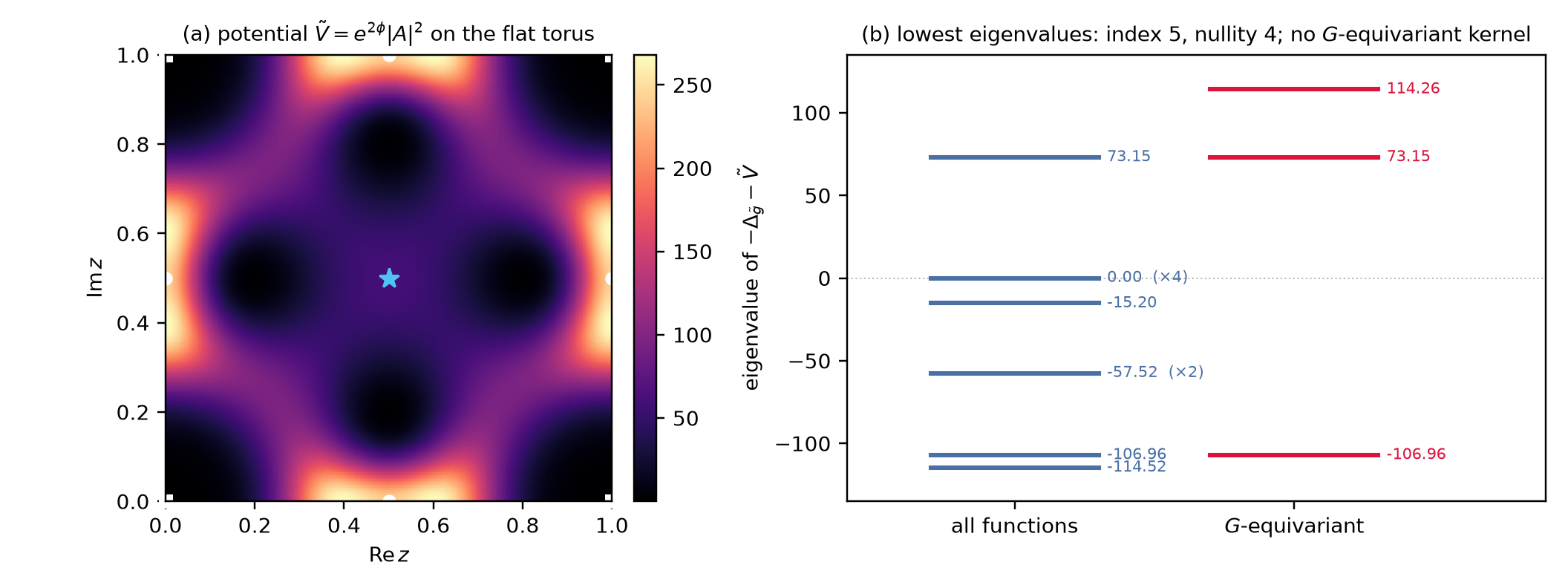}
\caption{(a) The potential $\tilde V=e^{2\phi}|A|^2$ of the compactified Jacobi operator $J=\Delta_{\tilde g}+\tilde V$ on the flat
torus. It is smooth everywhere, also at the ends: it vanishes at the planar end (corners) and is largest near the
catenoidal ends (midpoints of the sides). (b) The lowest eigenvalues of $-J$ on all functions (left) and on the
$G$-equivariant functions (right), with multiplicities. There are five negative eigenvalues and a four-dimensional kernel.
The kernel has no $G$-equivariant part: on the equivariant functions the spectrum jumps from $-106.96$ to $73.15$.}
\label{fig:spectrum}
\end{figure}

By Section~\ref{sec:linear}, $L=e^{-2\phi}J$ with $J=\Delta_{\tilde g}+\tilde V$ on the flat torus $\overline T$, and in the coordinate $z$
\[
\tilde V=\frac{8|g'|^2}{(1+|g|^2)^2}=\frac{8\mathfrak a^2|\wp''|^2}{\bigl(|\wp'|^2+\mathfrak a^2\bigr)^2},\qquad \wp''=6\wp^2-2e_1^2 .
\]
At the planar end $\wp'\sim-2z^{-3}$ and $\wp''\sim6z^{-4}$, so $\tilde V=O(|z|^4)$. At a catenoidal end $\wp'=0$ and
$\tilde V=8|\wp''|^2/\mathfrak a^2$ is finite. We discretize $-J$ by the Fourier pseudo-spectral method on an $N\times N$ grid of the
unit square, shifted by half a cell so that it avoids the ends, and compute all eigenvalues of the resulting symmetric
matrix. As a check, the discrete integral of $\tilde V$ is $75.3982236862$. This equals $24\pi$, because
$\int\tilde V\,dA_{\tilde g}=\int_M|A|^2\,dA=-2\int_MK\,dA=24\pi$.

The lowest eigenvalues of $-J$ for $N=40$ and $N=64$ agree in all printed digits:
\[
-114.518,\ -106.960,\ -57.516\ (\times2),\ -15.199,\ 0\ (\times4),\ 73.148 .
\]
So the index is $5$ and the nullity is $4$. This agrees with Nayatani's computation~\cite{Nayatani1993} and with
Theorem~\ref{thm:nayatani}, since by Section~\ref{sec:linear} the kernel of $J$ consists of the bounded Jacobi fields of $M$.

The group $G$ acts on the grid by the permutations induced by $\kappa_1,\kappa_2,\iota$, with the signs $\chi$. The eight
resulting operators commute with the matrix to $3\cdot10^{-10}$. The projections of the four kernel vectors onto the
$G$-equivariant functions vanish, which is Lemma~\ref{lem:no_kernel}. On the $G$-equivariant subspace (of dimension $190$
for $N=40$ and $496$ for $N=64$) the lowest eigenvalues are
\[
-106.960,\quad 73.148,\quad 114.261,\quad 284.406,\ \ldots
\]
So $J$ is invertible on $G$-equivariant functions with a spectral gap of $73$ around zero. This is the numerical
counterpart of Proposition~\ref{prop:J_iso}. The one negative equivariant eigenvalue is the second eigenvalue of
$-J$ on all functions. It suggests that $\Sigma_\varepsilon$ is unstable already among $G$-invariant variations, but we have not
proved this.

\subsection{The free-boundary correction near the hole}\label{sec:num_collar}

\begin{figure}[ht]
\centering
\includegraphics[width=\textwidth]{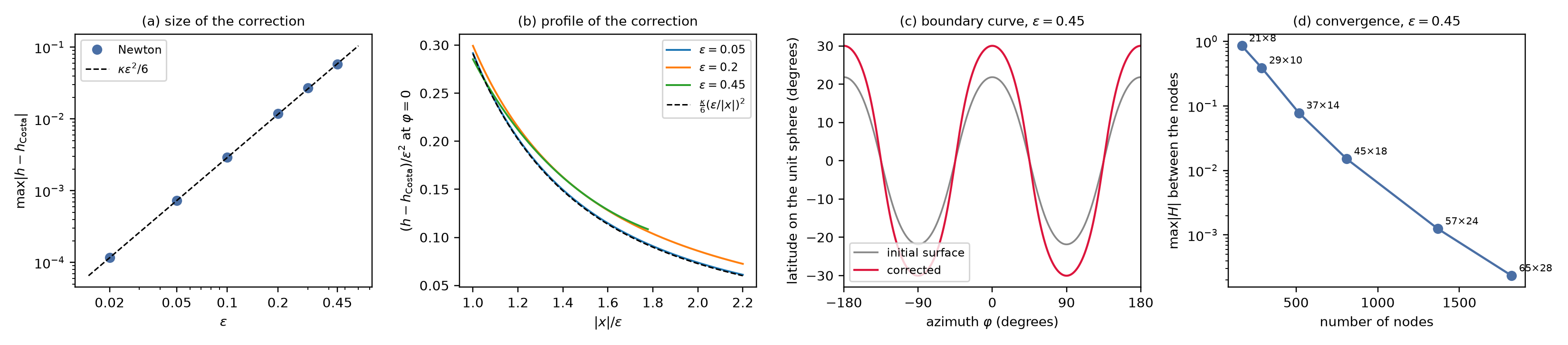}
\caption{The free-boundary collar computed by Newton's method. (a) The maximal correction $\max|h-h_{\rm Costa}|$ against
$\varepsilon$, compared with $\kappa\varepsilon^2/6$ from the model problem. (b) The correction along the ray $\varphi=0$, rescaled
by $\varepsilon^2$, against $|x|/\varepsilon$; the curves approach the model profile $\frac\kappa6(\varepsilon/|x|)^2$ as $\varepsilon\to0$. (c) The
latitude of the boundary curve on the rescaled unit sphere for $\varepsilon=0.45$: the corrected surface (red) meets the
sphere along a curve reaching $\pm30.0^\circ$, the initial surface (grey) along a curve reaching $\pm21.8^\circ$. Both cross the
equator at the azimuths $45^\circ+90^\circ k$. (d) The largest mean curvature of the computed collar between the collocation
nodes, for $\varepsilon=0.45$, against the number of nodes (Table~\ref{tab:convergence}).}
\label{fig:collar}
\end{figure}

\emph{The unknowns.} Near the hole we write the surface by heights on the spheres $\partial B_s$,
\[
X(s,\varphi)=\bigl(q\cos\varphi,\ q\sin\varphi,\ h(s,\varphi)\bigr),\qquad q=\sqrt{s^2-h^2},\qquad \varepsilon\leq s\leq S_1 .
\]
This is the ansatz of Section~\ref{sec:perturb}: the flow of $W$ near the origin moves points along the meridians of the
spheres $\partial B_s$ and keeps $s$ and the azimuth. So changing $h$ at fixed $(s,\varphi)$ is the deformation
$\Sigma_\varepsilon(u)$ there. For the Costa surface itself $h=h_{\rm Costa}(s,\varphi)$ is single-valued as long as the level curves of
$|\x|$ on $M$ are graphs over the azimuth. Numerically they are for $s\leq1.05$ and fold over for $s\gtrsim1.1$. By
Lemma~\ref{lem:modes} we expand $h=\sum_{n\equiv2\,(4)}h_n(s)\cos n\varphi$. The heights $h_{\rm Costa}$ at the nodes are computed
without interpolation: for each node $(s,\varphi)$ we find the point $z$ of the torus with $|\x(z)|=s$ and azimuth $\varphi$ by a damped
Newton method in $z$, with $\x(z)$ computed by Gauss--Legendre quadrature, to $10^{-11}$.

\emph{The equations.} Mean curvature zero at the interior collocation points. Orthogonality $\langle\n,X\rangle=0$ at $s=\varepsilon$.
In these coordinates $\langle X_s\times X_\varphi,X\rangle=s\,(h-s\,h_s)$, so this condition is $h=s\,h_s$; it is linear in $h$. At $s=S_1$
the decay condition $\partial_s(h-h_{\rm Costa})_n=-\frac ns(h-h_{\rm Costa})_n$ for each mode, which selects the decaying harmonic $s^{-n}$
of the flat model and lets the correction continue beyond $S_1$.

\emph{The discretization.} We use $65$ Chebyshev--Lobatto points in $s$ and the $28$ modes $n=2,6,\dots,110$, collocated at
$28$ azimuths in $(0,\frac\pi4)$; the other azimuths follow from the symmetry. The derivatives are spectral. Newton's method
starts from $h_{\rm Costa}$, uses a finite-difference Jacobian and stops at the level of rounding errors, about $10^{-9}$, after six or
seven iterations. We take $S_1=\min(4\varepsilon,0.8)$.

\emph{Verification between the nodes.} A small residual at the nodes does not by itself control the geometry between
them. We therefore evaluate the computed surface, with exact derivatives of its spectral representation, on a grid of
$161\times181$ points of $[\varepsilon,S_1]\times[0,\frac\pi4]$, and compute its mean curvature and the angle at the boundary there. The
same evaluator gives $|H|\leq3\cdot10^{-13}$ on a catenoid and $10^{-15}$ on a plane. Since the boundary condition $h=s\,h_s$ is linear
and holds at as many azimuths as there are modes, it holds for every azimuth, and the angle is right between the nodes too.

\emph{The model.} By Lemma~\ref{lem:linearization}, to first order the correction $u$ solves $\Delta u=0$ outside the hole
and $-\partial_ru+u/\varepsilon=\frac\kappa2\varepsilon\cos2\varphi$ on $|x|=\varepsilon$. The decaying solution is
\begin{equation}\label{eq:model}
u=\frac\kappa6\,\frac{\varepsilon^4}{|x|^2}\cos2\varphi,\qquad \max|u|=\frac{\kappa\varepsilon^2}6 .
\end{equation}

\begin{table}[ht]
\centering
\small
\setlength{\tabcolsep}{3.5pt}
\begin{tabular}{ccccccccc}
\toprule
$\varepsilon$ & $S_1$ & residual & $\max|H|$ & angle before & angle after & $\max|h-h_{\rm Costa}|$ & $\kappa\varepsilon^2/6$ & ratio\\[1pt]
\midrule
$0.02$ & $0.08$ & $9\cdot10^{-10}$ & $9\cdot10^{-8}$ & $1.00^\circ$ & $2\cdot10^{-11}$ & $1.164\cdot10^{-4}$ & $1.164\cdot10^{-4}$ & $1.000$\\
$0.05$ & $0.20$ & $6\cdot10^{-10}$ & $1\cdot10^{-6}$ & $2.50^\circ$ & $1\cdot10^{-10}$ & $7.288\cdot10^{-4}$ & $7.276\cdot10^{-4}$ & $1.002$\\
$0.10$ & $0.40$ & $5\cdot10^{-10}$ & $1\cdot10^{-7}$ & $4.97^\circ$ & $1\cdot10^{-10}$ & $2.933\cdot10^{-3}$ & $2.910\cdot10^{-3}$ & $1.008$\\
$0.20$ & $0.80$ & $6\cdot10^{-10}$ & $2\cdot10^{-4}$ & $9.74^\circ$ & $5\cdot10^{-10}$ & $1.197\cdot10^{-2}$ & $1.164\cdot10^{-2}$ & $1.028$\\
$0.30$ & $0.80$ & $5\cdot10^{-10}$ & $2\cdot10^{-4}$ & $14.13^\circ$ & $5\cdot10^{-10}$ & $2.717\cdot10^{-2}$ & $2.619\cdot10^{-2}$ & $1.037$\\
$0.45$ & $0.80$ & $1\cdot10^{-9}$ & $2\cdot10^{-4}$ & $19.55^\circ$ & $7\cdot10^{-10}$ & $5.778\cdot10^{-2}$ & $5.893\cdot10^{-2}$ & $0.980$\\
\bottomrule
\end{tabular}
\caption{The free-boundary collar on $65\times28$ nodes. ``Residual'' is the residual of the collocation system at the nodes;
``$\max|H|$'' is the largest mean curvature of the computed surface on the dense grid between the nodes. ``Angle before'' is the
largest deviation from $90^\circ$ of the angle between $M\setminus B_\varepsilon$ and the sphere; ``angle after'' is the same, in degrees, for the
computed surface, measured between the nodes. The last column compares the size of the correction with the
model~\eqref{eq:model}. All lengths are unscaled; after scaling by $\varepsilon^{-1}$ the correction at $\varepsilon=0.45$ is $0.13$.}
\label{tab:collar}
\end{table}

\begin{table}[ht]
\centering
\small
\begin{tabular}{cccccc}
\toprule
nodes & $21\times8$ & $29\times10$ & $37\times14$ & $45\times18$ & $65\times28$\\
\midrule
$\max|H|$ between the nodes & $0.85$ & $0.39$ & $0.078$ & $0.015$ & $2.3\cdot10^{-4}$\\
$\max|h-h_{\rm Costa}|$ & $0.057768$ & $0.057770$ & $0.057774$ & $0.057776$ & $0.057778$\\
\bottomrule
\end{tabular}
\caption{Convergence of the collar at $\varepsilon=0.45$, $S_1=0.8$ (unscaled lengths); $57\times24$ nodes give $1.3\cdot10^{-3}$.
The size of the correction is stable from the coarsest grid on; the mean curvature between the nodes is not, and it
needs the finer grids.}
\label{tab:convergence}
\end{table}

Tables~\ref{tab:collar} and~\ref{tab:convergence} and Figure~\ref{fig:collar} show the results. For $\varepsilon\leq0.1$ the size of the
correction agrees with the model~\eqref{eq:model} to $1\%$, and its profile converges to the model profile
(Figure~\ref{fig:collar}(b)). Up to $\varepsilon=0.45$ the deviation stays below $4\%$. On coarse grids the collocation residual is just
as small, but the mean curvature between the nodes is large: on $29\times10$ nodes it reaches $0.39$ (Table~\ref{tab:convergence}). On
the grid we use it is below $3\cdot10^{-4}$, and the angle at the boundary is right to $10^{-9}$ degrees. As a check of the discretization
we imposed the Costa heights at the hole instead of orthogonality; the solver then returns the Costa surface itself, up
to $1.5\cdot10^{-8}$. Moving $S_1$ from $0.8$ to $0.7$ changes the correction by $1.8\%$.

These computations are local: they do not solve the global problem of Section~\ref{sec:nonlinear}, whose solution is
determined by the whole torus, and the decay condition at $S_1$ is a model of the outside. They show that near the hole the
correction is the one predicted by the linear analysis, of relative size $\varepsilon$. For every $\varepsilon$ we tried, up to $0.45$, they give a
numerical approximation of a free-boundary minimal collar. This is not a proof that such a collar exists.

\subsection{The pictures}\label{sec:num_pictures}
Figure~\ref{fig:intro} shows a numerical model built from three pieces. The Costa surface is computed on a polar grid in $z$
around $\omega$ (Section~\ref{sec:num_impl}(a)), and every grid point is moved along its meridian.
\begin{itemize}
\item For $\varepsilon\leq|x|\leq S_1=0.8$ it is moved to the computed collar itself.
\item Beyond $S_1$ it is moved by the decaying tail $\sum_nb_n(S_1/|x|)^n\cos n\varphi$ of the correction, with $b_n$ the modal values at
$S_1$; the leading coefficient is $b_2=0.0258$ and $\sum_n|b_n|=0.034$, so the tail is at most $0.034\,(0.8/|x|)^2$. At $|x|=S_1$ the collar
and the tail agree to $2\cdot10^{-8}$, and their first derivatives agree by construction. The tail is \emph{not} a solution: the Costa
surface plus the tail has mean curvature up to $0.63$ at $|x|=0.8$, $0.37$ at $|x|=0.86$ and $0.26$ at $|x|=0.92$, decreasing outwards
(unscaled; after scaling by $\varepsilon^{-1}$ these values are multiplied by $\varepsilon=0.45$).
\item Inside the ball the collar is continued linearly; this part is cut away.
\end{itemize}
The surface is then scaled by $\varepsilon^{-1}$ and cut at the unit sphere. On the resulting mesh the angle with the sphere, computed
from finite differences, differs from $90^\circ$ by at most $0.05^\circ$ (against $19.5^\circ$ for the initial surface); on the smooth collar
it is right to $10^{-9}$ degrees. The boundary curve in Figure~\ref{fig:intro}(c) is computed directly from the collar. Since only the
modes $n\equiv2\pmod4$ occur, its height vanishes exactly at the azimuths $45^\circ+90^\circ k$. These are the points where the straight
lines $\{x_1=\pm x_2,\ x_3=0\}$ of the Costa surface meet the sphere.

Two caveats. First, Theorem~\ref{thm:A} gives $\Sigma_\varepsilon$ only for $\varepsilon<\varepsilon_0$, and we have no estimate for $\varepsilon_0$; for
$\varepsilon=0.45$ the existence of $\Sigma_\varepsilon$ is not proved. Second, we have no estimate for the distance between the model and an
exact $\Sigma_\varepsilon$; beyond the collar the model is the Costa surface with a small correction that is not minimal.

\subsection{The Costa--Hoffman--Meeks data}\label{sec:num_chm}
For $k=1$ the formula for $c_k$ in Section~\ref{sec:chm_data} gives $c_1=0.955977594972250$, which agrees with $\sqrt{2\pi/e_1}$ to $10^{-18}$.
For $k=1,\dots,5$ it gives $c_k=0.95598$, $0.98807$, $0.99512$, $0.99753$, $0.99858$. To check the period condition directly we
integrate $\Ree\Phi$ over the lifts of the circles of radius $\frac34$ around $\{0,1\}$ and $\{0,-1\}$; they close up after one turn, because the
monodromies of $w$ around $0$ and $\pm1$ multiply to $1$. With $c=c_k$ the real periods are below $10^{-15}$, $10^{-12}$ and $10^{-9}$ for
$k=1,2,3$, and with $c=1.05\,c_k$ they are about $0.13$. An auditing agent checked, in $85$-digit arithmetic and for $k=1,\dots,20$,
the local coordinate of Lemma~\ref{lem:chm_center}, the identities $\x\circ\lambda=S\x$, $\x\circ\kappa=T\x$ and the signs of the normals of
Lemma~\ref{lem:chm_symmetry}, to $10^{-85}$. For $k=1,\dots,9$ it checked that the boundary error is $-k\,a_k\varepsilon^k\cos((k+1)\varphi)$ up to a relative
correction of order $\varepsilon^{2k}$, as in Section~\ref{sec:chm}.

\emph{The pictures of Figure~\ref{fig:chm}.} The compact surface $\overline M_k$ is the union of two disks, each covering the $z$-sphere
$m$ times: the disk $|t|\leq1$ around the central point, with $z=-t^m$, and the disk $|s|\leq1$ around the planar end, with $z=s^{-m}$.
Their common boundary lies over $|z|=1$ and passes $m$ times through each catenoidal end. We integrate $\Phi$ on polar grids of
both disks and match them at one boundary point. Near the catenoidal ends we use separate polar grids in the end
coordinate $\zeta$, $z\mp1=\zeta^m$, which cover a whole neighbourhood of each end. Where the grids overlap they agree to $5\cdot10^{-7}$ for
$k=2$ and to $4\cdot10^{-6}$ for $k=3$. The collar is computed as in Section~\ref{sec:num_collar}, with the modes $(2j+1)m$, $\varepsilon=0.56$, $S_1=1.0$
for $k=2$ and $\varepsilon=0.72$, $S_1=1.3$ for $k=3$; at these values the angle of $M_k\setminus B_\varepsilon$ with the sphere is off by $10.9^\circ$ and
$4.1^\circ$. On $65\times28$ nodes the residual is below $10^{-9}$. Between the nodes the mean curvature is at most $4\cdot10^{-8}$ and
$9\cdot10^{-9}$, and the angle is right to $10^{-9}$ degrees. The size of the correction is $2.91\cdot10^{-2}$ and $1.04\cdot10^{-2}$, against
$\frac{ka_k}{k+2}\varepsilon^{k+1}=2.70\cdot10^{-2}$ and $1.03\cdot10^{-2}$ from the model of Section~\ref{sec:chm}. The rest of the surfaces, which come
closest to the origin at $|x|=1.54$ for $k=2$ and $2.03$ for $k=3$, stays outside the collar.

\subsection{Reproducibility}\label{sec:num_scripts}
All scripts are attached to the arXiv submission as ancillary files, in the directory \texttt{anc/scripts/}; run from there,
they write their output to \texttt{anc/figures/}. The numerical part needs Python~3 with NumPy, SciPy and
mpmath; the pictures also need Matplotlib and PyVista~0.49 (VTK~9.7). The interactive models at \viewer{costa} are made from the
same data.
\begin{itemize}
\item \texttt{costa\_numerics.py} (NumPy and SciPy only): the Weierstrass data, the exact level points, the collar solver and
its verification between the nodes.
\item \texttt{costa\_paper\_figures.py}: Sections~\ref{sec:num_impl}--\ref{sec:num_collar}, Tables~\ref{tab:defect}--\ref{tab:convergence},
Figures~\ref{fig:costa}, \ref{fig:torus} and~\ref{fig:defect}--\ref{fig:collar}; it writes all numbers to the file \texttt{costa\_numbers.txt} in \texttt{figures/costa/}.
\item \texttt{visualize\_exterior\_fbms.py}: Figure~\ref{fig:intro}, made by the functions \texttt{fig\_costa} and
\texttt{fig\_costa\_boundary}.
\item \texttt{costa\_theta\_core.py} and \texttt{costa\_jacobi\_spectrum.py N}: the theta-series implementation and the spectrum
of Section~\ref{sec:num_spectrum}.
\item \texttt{costa\_symmetry\_check.py}: Section~\ref{sec:num_symmetry}.
\item \texttt{costa\_ends\_check.py} (SymPy): the symbolic check of Lemma~\ref{lem:ends}(iii).
\item \texttt{chm\_period\_check.py} and \texttt{chm\_checks.py} (the latter written by an auditing agent): Section~\ref{sec:num_chm};
the class \texttt{CHM} in \texttt{costa\_numerics.py} and the function \texttt{fig\_chm} in \texttt{visualize\_exterior\_fbms.py}:
Figure~\ref{fig:chm}.
\item \texttt{costa\_hole\_experiment.py}: the first version of the experiment, with interpolated Costa data (superseded by
\texttt{costa\_numerics.py}).
\end{itemize}

\section*{Tool and computational resource disclosure}
The construction, the proofs and the numerical experiments in this paper were developed with substantial assistance from AI
systems. Anthropic's Claude models proposed the construction, carried out the numerical experiments, made the pictures and
drafted the text. The proofs in Sections~\ref{sec:costa}--\ref{sec:nonlinear} are those of an earlier draft, which was
audited by five independent Claude agents (Sections~2, 3, 4, 5 and a global adversarial review with a literature search).
Their findings are incorporated: one major (the estimate at the ends, now done with a vertical field) and about twenty
minor. One of the auditing agents wrote the theta-series implementation and the spectral computation of
Section~\ref{sec:num_spectrum}. Two further independent audits (30 September and 1 October 2026) are also incorporated. From
them come the corrected weight in Section~\ref{sec:linear}, the domain in Theorem~\ref{thm:A}(iii), the argument with the Gauss map in
Lemma~\ref{lem:no_kernel}, the numbering of the cited results of Mazet and Mendes, the explicit formulas of
Lemma~\ref{lem:ends} and Proposition~\ref{prop:index}. The second audit found that the first version of the collar computation had
large mean curvature between the collocation nodes, and Section~\ref{sec:num_collar} was redone after it. The extension to the
Costa--Hoffman--Meeks surfaces in Section~\ref{sec:chm} was worked out by the agent of the first of these audits. Its
ingredients were then checked independently: the period constant, the expansion at the central point, the symmetries and
the modes (Section~\ref{sec:num_chm}). All statements and proofs have been checked by the author, who takes full responsibility for
them and for any remaining errors. The manuscript has not yet been verified by an independent human expert.

{\raggedright

}

\end{document}